\newtheorem{theorem}{Theorem}[section]
\newtheorem{corollary}[theorem]{Corollary}
\newtheorem{lemma}[theorem]{Lemma}
\newtheorem{claim}{Claim}
\newtheorem{proposition}[theorem]{Proposition}
\newtheorem{observation}[theorem]{Observation}
\theoremstyle{definition}
\newtheorem{remark}[theorem]{Remark}
\newtheorem{definition}[theorem]{Definition}
\newtheorem{problem}[theorem]{Problem}
\tikzset{  
	-stealth,auto,node distance =0.8 cm and 1 cm, thin, 
	state/.style ={circle, draw, inner sep=0.2pt}, 
	point/.style = {circle, draw, inner sep=0.18cm, fill, node contents={}},  
	el/.style = {inner sep=2pt, align=right, sloped}  
}
\title{A complete characterization of split digraphs with a strong arc decomposition }
\date{\today}
\author{Jiangdong Ai\thanks{Corresponding author. School of Mathematical Sciences and LPMC, Nankai University, Tianjin 300071, P.R.
China. Email: jd@nankai.edu.cn. Partially supported
by the Fundamental Research Funds for the Central Universities, Nankai University. },~ Fankang He\thanks{School of Mathematical Sciences and LPMC, Nankai University, Tianjin 300071, P.R.
China. Email: hefankang@mail.nankai.edu.cn.},~ Zhaoxiang Li\thanks{School of Mathematical Sciences and LPMC, Nankai University, Tianjin 300071, P.R.
China. Email: ZhaoxiangLi@mail.nankai.edu.cn.},~ Zhongmei Qin\thanks{School of Science, Chang'an University, Xi’an 710064, Shaanxi Province, P.R.China. Email:qinzhm@chd.edu.cn. Partially supported by Natural Science Basic Research Program of Shaanxi (Program No.2024JC-YBMS-041, 2022JM-019) and the Fundamental Research Funds for the Central Universities, Chang'an University.},~ Changxin Wang\thanks{School of Mathematics and Statistics, Northwestern Polytechnical University, Xi’an, Shaanxi 710129, PR China. Email: Simonang@163.com.}}
\begin{document}
	
	\maketitle
 \begin{abstract}
     A \textbf{strong arc decomposition} of a (multi-)digraph $D(V, A)$ is a partition of its arc set $A$ into two disjoint arc sets $A_1$ and $A_2$ such that both of the spanning subdigraphs $D(V, A_1)$ and $D(V, A_2)$ are strong. In this paper, we fully characterize all split digraphs that do not have a strong decomposition. This resolves two problems proposed by Bang-Jensen and Wang and contributes to a series of efforts aimed at addressing this problem for specific graph classes. This work continues the research on semicomplete composition [Bang-Jensen, Gutin and Yeo, J. Graph Theory, 2020]; on locally semicomplete digraphs [Bang-Jensen and Huang, J. Combin. Theory Ser. B, 2010]; on a type of tournaments [Bang-Jensen and Yeo, Combinatorica, 2004]. 
 \end{abstract}
%
%
	
\section{Introduction}
    For a straightforward look at the background of the topic and our results, we move some necessary notation and definitions to Section~\ref{Sec_2} and refer readers to \cite{BG09} for the standard terminology and notation not introduced in this paper.

    A \textbf{strong arc decomposition} of a (multi-)digraph $D(V,A)$ is a partition of its arc set $A$ into two disjoint arc sets $A_1$ and $A_2$ such that both of the spanning subdigraphs $D(V, A_1)$ and $D(V, A_2)$ are strong. Determining whether a (multi-)digraph $D(V, A)$ has a strong arc decomposition attracted a lot of attention. We can easily see that every digraph $D$ with a strong arc decomposition is 2-arc-strong. Then, asking if every 2-arc-strong digraph has a strong arc decomposition is natural. Unfortunately, the following digraphs give a negative answer.

    Let $S_4$ be the digraph depicted in Figure \ref{fig-MD-exceptional}. It is not hard to check that $S_4$ is 2-arc-strong but does not contain a strong arc decomposition. 

 Bang-Jensen and Yeo~\cite{bangCOM24} proved that for a 2-arc-strong semicomplete digraph $D$, $S_4$ is the only exception that does not have a strong arc decomposition. For completeness of the venation, we give the theorem here.
\begin{theorem}\cite{bangCOM24}\label{thm:semi SAD}
	A 2-arc-strong semicomplete digraph $D$ has a strong arc decomposition if and only if $D$ is not isomorphic to the digraph $S_4$ depicted in Figure \ref{fig-MD-exceptional}. Furthermore, a strong arc decomposition of $D$ can be obtained in polynomial time when it exists.		
\end{theorem}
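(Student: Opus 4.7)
The ``only if'' direction is witnessed directly by $S_4$, so the substance lies in the ``if'' direction, and I would proceed by induction on $n=|V(D)|$. Because the exception sits on $4$ vertices, the base cases must include an exhaustive check of every $2$-arc-strong semicomplete digraph on at most $4$ (and a little beyond, for safety) vertices; this is precisely where $S_4$ appears as the unique exception and where one fixes an explicit decomposition to seed the recursion.

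For the inductive step my plan is to locate a vertex $v$ such that $D':=D-v$ is still a $2$-arc-strong semicomplete digraph and is not isomorphic to $S_4$. By induction $D'$ admits a strong arc decomposition $A(D')=A_1'\cup A_2'$, and it then remains to partition the arcs of $D$ incident with $v$ into two sets $B_1,B_2$ so that $A_i:=A_i'\cup B_i$ spans a strong subdigraph of $D$ for $i=1,2$. This second step is the easy one: semicompleteness forces $d^-(v)+d^+(v)\ge n-1$, while $2$-arc-strongness gives $d^-(v),d^+(v)\ge 2$, so one can arrange that each $B_i$ contains at least one in-arc and one out-arc of $v$; since $D'$ with $A_i'$ is already strong, every other vertex then reaches and is reached from $v$ through $B_i$, so the extended spanning subdigraph is strong.

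The real difficulty is guaranteeing that such a $v$ exists. A generic vertex of a $2$-arc-strong semicomplete digraph need not leave a $2$-arc-strong digraph behind when deleted---a minimum arc-cut of $D$ may be held together precisely by arcs at $v$---and even if $\lambda(D-v)\ge 2$, one must rule out $D-v\cong S_4$. My plan is to analyse minimum arc-cuts in semicomplete digraphs using Camion's Hamilton cycle theorem for strong semicomplete digraphs and the classical decomposition of a semicomplete digraph along a minimum arc-cut, and thereby show that either a safe vertex $v$ exists, or $D$ is so constrained that $n$ is small enough to return to the base case.

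The principal obstacle is thus the structural case analysis of those $2$-arc-strong semicomplete digraphs without a ``removable'' vertex, since this regime is exactly the home of $S_4$ and an oversight here could introduce a spurious second exception; in practice one should also be prepared to fall back on contracting an arc, rather than deleting a vertex, when deletion fails. As a byproduct, the entire reduction is algorithmic: locating $v$ (or a contractible arc), recursing on the smaller digraph, and distributing the arcs at $v$ are each polynomial-time, which yields the complexity conclusion stated in the theorem.
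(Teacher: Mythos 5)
This is a cited theorem (from Bang-Jensen and Yeo, \emph{Combinatorica} 2004), not one the paper proves, so the comparison can only be with the original proof.

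Your plan has a genuine gap, and it is exactly at the point you yourself flag as ``the real difficulty.'' The vertex-deletion induction requires a vertex $v$ such that $D-v$ remains $2$-arc-strong (and is not $S_4$), but such a vertex need not exist for any $n$. Take the rotational tournament $R_5$ on $\mathbb{Z}_5$ with $i\to j$ iff $j-i\in\{1,2\}$: it is $2$-arc-strong and decomposes into the two Hamilton cycles $0\,1\,2\,3\,4\,0$ and $0\,2\,4\,1\,3\,0$, so it satisfies the theorem. Yet for every vertex $v$, the tournament $R_5-v$ has a vertex of out-degree $1$, so $R_5-v$ is not $2$-arc-strong and the induction cannot recurse. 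The fallback you mention --- contracting an arc --- is not developed at all, and it is not clear it can be made to work: contraction in a semicomplete digraph creates a multigraph (which would require the multi-digraph version of the theorem as a primitive, precisely what one wants to deduce later), and it is not obvious which arc to contract or how a decomposition lifts back. So the argument is an outline whose central step is unresolved; the extension step (distributing the arcs at $v$ once a removable $v$ is in hand) is fine, but it never gets to run.

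The actual Bang-Jensen--Yeo proof does not proceed by vertex deletion. It is a structural argument built around arc-disjoint in- and out-branchings and a careful analysis of minimum arc-cuts and separating sets in semicomplete digraphs, and it treats the tournament and non-tournament cases differently; in particular, for tournaments they prove the stronger statement that every $k$-arc-strong tournament decomposes into $k$ strong spanning subdigraphs. If you want to salvage a deletion/induction strategy, you would at minimum need a lemma of the form ``every $2$-arc-strong semicomplete digraph on $n\geq 5$ vertices either has a removable vertex, or has a Hamilton decomposition, or \dots,'' and that lemma is essentially the whole theorem in disguise.
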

 
 Bang-Jensen, Gutin and Yeo~\cite{bangJGT95} generalized the above theorem to semicomplete multidigraphs with six more exceptions, see Figure~\ref{fig-MD-exceptional}. 
 \begin{theorem}\cite{bangJGT95}\label{thm:multi semi SAD}
	A 2-arc-strong semicomplete multi-digraph $D=(V,A)$ on $n$ vertices has a strong arc decomposition if and only if $D$ is not isomorphic to one of the exceptional digraphs depicted in Figure \ref{fig-MD-exceptional}. Furthermore, a strong arc decomposition of $D$ can be obtained in polynomial time when it exists.	
\end{theorem}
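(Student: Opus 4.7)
The plan is to reduce to Theorem~\ref{thm:semi SAD} by stripping duplicate arcs. Given a 2-arc-strong semicomplete multi-digraph $D$, let $D_0$ denote the underlying simple semicomplete digraph obtained by replacing every family of parallel arcs by a single copy. The necessity direction is a finite inspection: one verifies directly that none of the seven multi-digraphs in Figure~\ref{fig-MD-exceptional} admits a strong arc decomposition. The substance of the theorem is the sufficiency direction.

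The generic situation is when $D_0$ is 2-arc-strong and $D_0 \not\cong S_4$. Here Theorem~\ref{thm:semi SAD} supplies a strong arc decomposition $A_1 \sqcup A_2$ of $D_0$, and the extra copies of parallel arcs in $D$ can be distributed arbitrarily between $A_1$ and $A_2$ while preserving strong connectivity of both spanning subdigraphs, producing a strong arc decomposition of $D$. The two degenerate cases are: (i) $D_0 \cong S_4$, so $D$ is $S_4$ with some arcs duplicated; and (ii) $D_0$ is not 2-arc-strong, which forces every 1-arc-cut of $D_0$ to be crossed by a parallel pair in $D$. In (i) we enumerate duplication patterns on top of $S_4$ and directly check which ones yield a strong arc decomposition, matching the negative instances against the six additional exceptions in Figure~\ref{fig-MD-exceptional}. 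In (ii) we locate a 1-arc-cut $uv$ of $D_0$, use the fact that $uv$ appears with multiplicity at least two in $D$ to split the problem into two smaller 2-arc-strong semicomplete multi-digraphs on each side of the cut, apply induction on the number of arcs, and glue the strong arc decompositions together using the parallel copies of $uv$.

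The main obstacle is the gluing step in case (ii): when $D_0$ has several, possibly nested, 1-arc-cuts, the pieces obtained by contracting across a cut need not be semicomplete in the standard sense, and care is required to ensure that the inductively produced decompositions align along each cut so that the resulting arc partition of $D$ yields two strongly connected spanning subdigraphs on all of $V$. The case analysis in (i) must also be exhaustive to confirm that exactly six new exceptions appear on top of $S_4$, and not more. The polynomial-time claim follows from the constructive character of the reduction together with the polynomial-time algorithm guaranteed by Theorem~\ref{thm:semi SAD} on the simplified semicomplete digraphs at the leaves of the recursion.
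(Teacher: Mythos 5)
This theorem is quoted from Bang-Jensen, Gutin, and Yeo (reference \cite{bangJGT95}); the present paper cites it as a tool and does not prove it, so there is no in-paper proof to compare against. Evaluating your sketch on its own terms, the reduction to the underlying simple semicomplete digraph $D_0$ is a natural starting point, and the generic case (where $D_0$ is 2-arc-strong and $D_0 \not\cong S_4$) does follow from Theorem~\ref{thm:semi SAD} together with free distribution of duplicate arcs. But the two non-generic cases --- which are the actual content of the theorem, since the generic case is almost immediate --- are not proved.

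The larger gap is case (ii), where $D_0$ is strong but not 2-arc-strong. A semicomplete digraph with a cut-arc $uv$ does not split into two semicomplete pieces across that cut: the removal of $uv$ from $D_0$ yields an acyclic ordering of strong components $C_1,\dots,C_k$, with $uv$ being the unique backward arc from the terminal to the initial component, and \emph{every} pair of vertices is still adjacent. There is no bipartition of $V$ into ``the two sides of the cut'' that produces two semicomplete multi-digraphs (let alone 2-arc-strong ones) on which to recurse. You flag the gluing step as the main obstacle, but the proposal never says what the two recursive instances are, why they would satisfy the theorem's hypotheses, or how a pair of decompositions on them would combine into a decomposition of $D$; this is not a detail to be filled in carefully, it is the missing idea. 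Case (i) ($D_0 \cong S_4$) also has a gap of a smaller kind: there are infinitely many multi-digraphs with underlying digraph $S_4$, one for each multiplicity pattern, so ``enumerate duplication patterns and check'' is not a finite process as stated. One first needs a bounding argument (for example, that any arc of multiplicity at least three, or a sufficiently large total arc count, forces a decomposition) to reduce to a finite check; your proposal does not supply it. The polynomial-time claim inherits both gaps, since the recursion in case (ii) is not actually specified.
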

\begin{figure}[H]
	\centering
	\subfigure{\begin{minipage}[t]{0.23\linewidth}
			\centering\begin{tikzpicture}[scale=0.8]
				\filldraw[black](0,0) circle (3pt)node[label=left:$v_1$](v1){};
				\filldraw[black](2,0) circle (3pt)node[label=right:$v_2$](v2){};
				\filldraw[black](0,-2) circle (3pt)node[label=left:$v_3$](v3){};
				\filldraw[black](2,-2) circle (3pt)node[label=right:$v_4$](v4){};
				\foreach \i/\j/\t in {
					v1/v2/0,
					v2/v3/0,
					v3/v4/0,
					v4/v1/0,
					v1/v3/15,
					v2/v4/15,
					v3/v1/15,
					v4/v2/15
				}{\path[draw, line width=0.8] (\i) edge[bend left=\t] (\j);}			
			\end{tikzpicture}\caption*{$S_4$}\end{minipage}}
	\subfigure{\begin{minipage}[t]{0.23\linewidth}
			\centering\begin{tikzpicture}[scale=0.8]
				\filldraw[black](0,0) circle (3pt)node[label=left:$v_1$](v1){};
				\filldraw[black](2,0) circle (3pt)node[label=right:$v_2$](v2){};
				\filldraw[black](0,-2) circle (3pt)node[label=left:$v_3$](v3){};
				\filldraw[black](2,-2) circle (3pt)node[label=right:$v_4$](v4){};
				\foreach \i/\j/\t in {
					v1/v2/0,
					v2/v3/0,
					v3/v4/0,
					v4/v1/0,
					v1/v3/15,
					v2/v4/15,
					v3/v1/10,
					v3/v1/30,
					v4/v2/15
				}{\path[draw, line width=0.8] (\i) edge[bend left=\t] (\j);}	
			\end{tikzpicture}\caption*{$S_{4,1}$}\end{minipage}}
	\subfigure{\begin{minipage}[t]{0.23\linewidth}
			\centering\begin{tikzpicture}[scale=0.8]
				\filldraw[black](0,0) circle (3pt)node[label=left:$v_1$](v1){};
				\filldraw[black](2,0) circle (3pt)node[label=right:$v_2$](v2){};
				\filldraw[black](0,-2) circle (3pt)node[label=left:$v_3$](v3){};
				\filldraw[black](2,-2) circle (3pt)node[label=right:$v_4$](v4){};
				\foreach \i/\j/\t in {
					v1/v2/0,
					v1/v2/20,
					v2/v3/0,
					v3/v4/0,
					v4/v1/0,
					v1/v3/15,
					v2/v4/15,
					v3/v1/15,
					v4/v2/15
				}{\path[draw, line width=0.8] (\i) edge[bend left=\t] (\j);}	
			\end{tikzpicture}\caption*{$S_{4,2}$}\end{minipage}}
	\subfigure{\begin{minipage}[t]{0.23\linewidth}
			\centering\begin{tikzpicture}[scale=0.8]
				\filldraw[black](0,0) circle (3pt)node[label=left:$v_1$](v1){};
				\filldraw[black](2,0) circle (3pt)node[label=right:$v_2$](v2){};
				\filldraw[black](0,-2) circle (3pt)node[label=left:$v_3$](v3){};
				\filldraw[black](2,-2) circle (3pt)node[label=right:$v_4$](v4){};
				\foreach \i/\j/\t in {
					v1/v2/0,
					v2/v3/0,
					v3/v4/0,
					v4/v1/0,
					v1/v3/15,
					v2/v4/10,
					v4/v2/-30,
					v3/v1/10,
					v1/v3/-30,
					v4/v2/15
				}{\path[draw, line width=0.8] (\i) edge[bend left=\t] (\j);}	
			\end{tikzpicture}\caption*{$S_{4,3}$}\end{minipage}}

        \subfigure{\begin{minipage}[t]{0.23\linewidth}
			\centering\begin{tikzpicture}[scale=0.8]
				\filldraw[black](0,0) circle (3pt)node[label=left:$v_1$](v1){};
				\filldraw[black](2,0) circle (3pt)node[label=right:$v_2$](v2){};
				\filldraw[black](0,-2) circle (3pt)node[label=left:$v_3$](v3){};
				\filldraw[black](2,-2) circle (3pt)node[label=right:$v_4$](v4){};
				\foreach \i/\j/\t in {
					v1/v2/0,
                        v1/v2/20,
					v2/v3/0,
					v3/v4/0,
					v4/v1/0,
					v1/v3/15,
					v2/v4/15,
					v3/v1/10,
					v1/v3/-30,
					v4/v2/15
				}{\path[draw, line width=0.8] (\i) edge[bend left=\t] (\j);}	
			\end{tikzpicture}\caption*{$S_{4,4}$}\end{minipage}}
        \subfigure{\begin{minipage}[t]{0.23\linewidth}
			\centering\begin{tikzpicture}[scale=0.8]
				\filldraw[black](0,0) circle (3pt)node[label=left:$v_1$](v1){};
				\filldraw[black](2,0) circle (3pt)node[label=right:$v_2$](v2){};
				\filldraw[black](0,-2) circle (3pt)node[label=left:$v_3$](v3){};
				\filldraw[black](2,-2) circle (3pt)node[label=right:$v_4$](v4){};
				\foreach \i/\j/\t in {
					v1/v2/0,
                        v1/v2/20,
					v2/v3/0,
					v3/v4/0,
					v4/v1/0,
					v1/v3/15,
					v2/v4/10,
					v3/v1/15,
					v4/v2/-30,
					v4/v2/15
				}{\path[draw, line width=0.8] (\i) edge[bend left=\t] (\j);}	
			\end{tikzpicture}\caption*{$S_{4,5}$}\end{minipage}}
        \subfigure{\begin{minipage}[t]{0.23\linewidth}
			\centering\begin{tikzpicture}[scale=0.8]
				\filldraw[black](0,0) circle (3pt)node[label=left:$v_1$](v1){};
				\filldraw[black](2,0) circle (3pt)node[label=right:$v_2$](v2){};
				\filldraw[black](0,-2) circle (3pt)node[label=left:$v_3$](v3){};
				\filldraw[black](2,-2) circle (3pt)node[label=right:$v_4$](v4){};
				\foreach \i/\j/\t in {
					v1/v2/0,
					v2/v3/0,
					v3/v4/0,
					v4/v1/0,
					v1/v3/15,
					v2/v4/10,
					v4/v2/-30,
					v3/v1/10,
					v1/v3/-30,
					v4/v2/15,
                        v1/v2/20
				}{\path[draw, line width=0.8] (\i) edge[bend left=\t] (\j);}	
			\end{tikzpicture}\caption*{$S_{4,6}$}\end{minipage}}

	\caption{2-arc-strong directed multigraphs without strong arc decompositions.}
	\label{fig-MD-exceptional}
\end{figure}
Later, Bang-Jensen and Huang~\cite{bangJCTB102} extended semicomplete digraphs to locally semicomplete digraphs, see the following theorem.
 \begin{theorem}\cite{bangJCTB102}\label{}
     A 2-arc-strong locally semicomplete digraph $D$ has a strong arc decomposition if and only if $D$ is not the square of an even cycle. Every 3-arc-strong locally semicomplete digraph has a strong arc decomposition and such a decomposition can be obtained in polynomial time.
 \end{theorem}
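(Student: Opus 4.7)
The plan is to leverage the structure theory of locally semicomplete digraphs due to Bang-Jensen: every connected locally semicomplete digraph is either semicomplete or admits a \emph{round decomposition} $D = R[D_1, D_2, \ldots, D_p]$ with $R$ a round local tournament on $p \ge 2$ vertices and each $D_i$ a semicomplete digraph. First, I would dispose of the semicomplete case by invoking Theorem~\ref{thm:semi SAD}. Since $S_4$ is precisely the square $C_4^2$ of the directed $4$-cycle, the exception in that theorem matches the claimed exception here, so this case is immediate.

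For the non-semicomplete case, I would work with the round decomposition $D = R[D_1, \ldots, D_p]$. The strategy is to build two spanning strong subdigraphs $H_1, H_2$ simultaneously. Using the cyclic order given by $R$, I would first carve out two arc-disjoint ``round trips'' through the blocks, each visiting every $D_i$ and using the complete bipartite arc sets between consecutive blocks in a balanced way; this is possible because $2$-arc-strength ensures at least two inter-block arcs between each consecutive pair. Second, within each $D_i$, I would split the intra-block arcs so that each $H_j$, restricted to $V(D_i)$ and enhanced by the chosen inter-block arcs (which all go between $V(D_i)$ and $V(D_{i\pm1})$), keeps the block internally reachable in $H_j$. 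Local arc swaps near each interface handle small parity obstructions.

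The main obstacle, and where the even-cycle-square exception emerges, is the extremal case in which every block $D_i$ is a single vertex, so $D$ is a $2$-regular locally semicomplete digraph with $4k$ arcs on $2k$ vertices. Here each $H_j$ must be a spanning Hamiltonian dicycle. A parity argument separating the ``short'' arcs $v_i \to v_{i+1}$ from the ``long'' arcs $v_i \to v_{i+2}$ shows that no such decomposition exists exactly when the cycle is even: the long arcs split into two disjoint $k$-cycles which is not strong, and any attempt to mix short and long arcs at each vertex (respecting in-/out-degree $1$ in each part) forces, through the even parity of the cycle, a closed orbit that fails to be spanning. One then verifies that in every other situation (a block of size $\ge 2$, an odd-length $R$, or any additional arc from $2$-arc-strength above $2$-regular) the resulting slack suffices to carry out the construction above.

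For the $3$-arc-strong statement, the extra connectivity automatically provides the slack: the extremal $2$-regular configuration above cannot occur, and in the round decomposition both intra-block and inter-block arc sets carry at least one arc above the bare minimum. I would turn the construction into a polynomial-time algorithm by (i) computing the round decomposition in polynomial time using known algorithms of Bang-Jensen, (ii) applying the polynomial-time decomposition of Theorem~\ref{thm:semi SAD} on each semicomplete block, and (iii) assembling the pieces via a submodular-flow/ear-decomposition routine that certifies strong connectivity of each $H_j$, yielding both the structural and the algorithmic halves of the statement.
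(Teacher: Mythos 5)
This statement is cited in the paper from Bang--Jensen and Huang's 2010 paper; the paper you are reading does not prove it, so there is no internal proof to compare against. Evaluating your sketch on its own terms, there is a structural gap at the very first step. The classification theorem of Bang--Jensen, Guo, Gutin and Volkmann (which is what you are invoking) does not say that every connected locally semicomplete digraph is either semicomplete or round decomposable. It gives a trichotomy: the third class consists of connected locally semicomplete digraphs that are \emph{neither} semicomplete \emph{nor} round decomposable, and these have their own rather intricate description (involving a minimal separator $S$ and the strong component structure of $D-S$). Your proposal never addresses this class, so as written the argument does not cover all $2$-arc-strong locally semicomplete digraphs; in fact Bang--Jensen and Huang spend a substantial part of their paper precisely on this third class.

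Within the round decomposable case, the two load-bearing steps are also stated without proof. The claim that ``local arc swaps near each interface handle small parity obstructions'' is exactly the part that needs the work, and the parity argument for the exceptional case is only gestured at: you observe correctly that the long arcs of $C_{2k}^2$ split into two disjoint $k$-cycles, but the assertion that \emph{every} mixed assignment of short and long arcs produces a non-spanning closed orbit needs an actual counting/parity argument (which does exist, but is not supplied). For the $3$-arc-strong half of the statement, saying the extra connectivity ``automatically provides the slack'' is a conclusion, not an argument; the point of that half of the theorem is that $3$-arc-strength eliminates the even-cycle-square exception and simplifies the case analysis, and this still requires checking against all three classes in the classification. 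In short: the architecture of the approach (structure theorem, then case analysis by class, then reduce blocks to the semicomplete Theorem~\ref{thm:semi SAD}) is the right one, but a class is missing from the structure theorem and the two crucial technical lemmas are only sketched.
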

Bang-Jensen, Gutin and Yeo \cite{bangJGT95} considered the strong 
arc decomposition of semicomplete composition and solved it completely.
\begin{theorem}\cite{bangJGT95}
    Let $T$ be a strong semicomplete digraph on $t\geq 2$ vertices and let $G_1,\dots, H_t$ be arbitrary digraphs. Then $Q=T[G_1,\dots, H_t]$ has a strong arc decomposition if and only if $Q$ is 2-arc-strong and is not isomorphic to one of the following four digraphs: $S_4, C_3[\bar{K}_2,\bar{K}_2,\bar{K}_2], C_3[\bar{K}_2,\bar{K}_2, P_2]$ and $C_3[\bar{K}_2,\bar{K}_2,\bar{K}_3]$. In particular, every 3-arc-strong semicomplete composition has a strong arc decomposition.
\end{theorem}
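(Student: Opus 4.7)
The plan is to split on whether $T$ itself admits a strong arc decomposition. For the necessity direction, the case $Q\cong S_4$ is handled by Theorem~\ref{thm:semi SAD}, so it remains to rule out the three $C_3$-compositions. In each of these, every arc of $Q$ lies between two consecutive blocks of the $3$-cycle, and a short tally shows that any partition $A(Q) = A_1 \sqcup A_2$ of the arcs is forced (on the unique block pair with only two outgoing arcs) to concentrate arcs into a single $A_k$, leaving the other spanning subdigraph without an outgoing arc from some block. I would make this precise by a small case check on how many arcs in each direction-pair end up in each part.

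For the sufficiency direction, the centerpiece is the following \emph{lifting lemma}: if the base digraph $T$ admits a strong arc decomposition, then so does the composition $Q$. Precisely, suppose $T = T_1\cup T_2$ is a strong arc decomposition. Define $Q_k$ to contain, for every arc $ij\in T_k$, \emph{all} arcs from $V(G_i)$ to $V(G_j)$, and place every internal arc of every $G_i$ arbitrarily (say, all in $Q_1$). To verify strongness of $Q_k$: for $u\in V(G_i)$ and $v\in V(G_j)$, pick an $i$--$j$ walk in $T_k$ (or a cycle through $i$ if $i=j$, which exists because $T_k$ is strong on $t\geq 2$ vertices) and lift it block by block. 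Since \emph{all} between-block arcs for each $T_k$-arc are included, every transition from a vertex of $V(G_{i_\ell})$ to any chosen vertex of $V(G_{i_{\ell+1}})$ is available, and in particular we can end at $v$. Combined with the simple-graph version of Theorem~\ref{thm:semi SAD}, this reduces the problem to $T$ being either the single exception $S_4$ or a strong semicomplete digraph that is not $2$-arc-strong.

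For $T = S_4$, the hypothesis $Q\not\cong S_4$ forces some $G_i$ to contribute at least one extra arc (either internally or through $|V(G_i)|\geq 2$), and I would use this surplus arc to perturb an optimal "near-decomposition" of $S_4$ into a genuine decomposition of $Q$. For the case $T$ strong but not $2$-arc-strong, every bridge $ij$ of $T$ forces $|V(G_i)||V(G_j)|\geq 2$ in order for $Q$ to be $2$-arc-strong. I would then inflate $T$ to a semicomplete multi-digraph $M$ by replacing each arc $ij$ with $|V(G_i)||V(G_j)|$ parallel copies and apply Theorem~\ref{thm:multi semi SAD} to $M$; a strong arc decomposition of $M$ lifts to one of $Q$ by assigning each parallel copy to a distinct between-block arc. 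The main obstacle is maintaining \emph{vertex-level} connectivity in this lift: one needs every individual $u\in V(G_i)$, not merely every block $G_i$, to have both in- and out-arcs in both parts $Q_1,Q_2$. A careful matching/flow argument on the bipartite "arc distribution" between consecutive blocks handles the generic situation, and the three exceptional $C_3$-compositions are precisely the small configurations where no such balanced matching exists---which is where I expect the bulk of the case analysis to live, since it couples the sizes of $|V(G_i)|$ with the presence or absence of internal arcs in each $G_i$.
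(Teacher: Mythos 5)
Your lifting lemma is a clean and correct observation: if $T$ itself has a strong arc decomposition $T_1\cup T_2$, then assigning \emph{all} arcs from $V(G_i)$ to $V(G_j)$ to $Q_k$ whenever $ij\in A(T_k)$ (internal arcs placed arbitrarily) does yield a strong arc decomposition of $Q$, since any $(u,v)$ in $Q_k$ can be reached by lifting a $T_k$-walk block by block. Combined with Theorem~\ref{thm:semi SAD}, this correctly reduces sufficiency to the two cases $T\cong S_4$ and $T$ strong but not $2$-arc-strong. However, those two cases are where essentially all of the content of this theorem lives, and your treatment of both is a description of intent rather than an argument. For $T\cong S_4$, ``perturbing a near-decomposition with a surplus arc'' is not a specified construction, and it is not at all clear it works uniformly over every nontrivial inflation. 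For $T$ strong but not $2$-arc-strong, the multi-digraph inflation to $M$ plus Theorem~\ref{thm:multi semi SAD} is indeed the right tool, but the step you defer to a ``careful matching/flow argument'' --- converting a block-level decomposition of $M$ into a vertex-level strong decomposition of $Q$ --- is precisely the crux, and it genuinely fails in general: for $Q=C_3[\bar{K}_2,\bar{K}_2,\bar{K}_2]$, the inflated multi-digraph $M$ is a $3$-cycle with four parallel copies of each arc, which is $4$-arc-strong and trivially has a strong arc decomposition, yet $Q$ does not. So the block-to-vertex lift is not a generic matching exercise; its failure modes are exactly the three $C_3$-exceptions, and characterizing them is the theorem's main work.

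Two smaller issues in the necessity sketch: your claim that ``every arc of $Q$ lies between two consecutive blocks'' is false for $C_3[\bar{K}_2,\bar{K}_2,P_2]$, which has an internal arc in the $P_2$ block, and the ``unique block pair with only two outgoing arcs'' observation does not apply to $C_3[\bar{K}_2,\bar{K}_2,\bar{K}_3]$, where the inter-block arc counts are $4,6,6$. Each of the three nonexistence verifications needs its own ad hoc parity or Hamiltonian-decomposition argument. Note also that the paper only cites this result from Bang-Jensen, Gutin and Yeo and does not reprove it, so there is no in-paper proof to compare against; on its own terms your proposal has the right skeleton but leaves the load-bearing steps as placeholders.
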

    Recently, Bang-Jensen and Wang \cite{bang2024} have considered the strong arc decomposition of split digraphs, which is another generalization of semicomplete digraphs. Their main result is the following:
    \begin{theorem}\cite{bang2024}
    Let $D=\left(V_1, V_2; A\right)$ be a 2-arc-strong split digraph such that $V_1$ is an independent set and the subdigraph induced by $V_2$ is semicomplete. If every vertex of $V_1$ has both out- and in-degree at least 3 in $D$, then $D$ has a strong arc decomposition.
    \end{theorem}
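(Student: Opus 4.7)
The plan is to construct an arc partition $A = A_1 \cup A_2$ in two stages: first partition the arcs of $D[V_2]$ into two sets each inducing a strong spanning subdigraph of $V_2$, then distribute the $V_1$-incident arcs so that every $v\in V_1$ ends up with at least one in-arc and at least one out-arc of each color. If both stages succeed, then each spanning subdigraph $(V,A_i)$ is strong, since inside $V_2$ we have a strong spanning subdigraph, and every $v\in V_1$ has neighbours into and out of $V_2$ in color $i$, so $v$ joins the strong component of $V_2$ in color $i$.

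In the \emph{good} case, where $D[V_2]$ is 2-arc-strong and not isomorphic to any of the seven exceptional digraphs in Figure~\ref{fig-MD-exceptional}, Theorem~\ref{thm:multi semi SAD} immediately yields the desired partition of $A(D[V_2])$. Distributing the $V_1$-incident arcs is then routine: since $d^-(v), d^+(v) \ge 3$ for every $v\in V_1$, a greedy assignment produces at least one in-arc and at least one out-arc of each color at $v$. Strong connectivity of both $(V, A_i)$ then follows from the observation in the first paragraph.

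The substantive work is the \emph{bad} case, where either $\lambda(D[V_2]) \le 1$ or $D[V_2]$ is one of the exceptional digraphs and hence admits no strong arc decomposition purely inside $V_2$. Here the missing connectivity must be routed through $V_1$: a path $u \to v \to w$ with $u, w \in V_2$ and $v \in V_1$ acts as a virtual arc $uw$ for the purpose of strong connectivity, and the hypothesis $d^-(v), d^+(v) \ge 3$ guarantees enough such detours to split between the two color classes. The plan is to perform a splitting-off operation at each $v\in V_1$, pairing its in-arcs with its out-arcs to produce an auxiliary semicomplete multi-digraph $D'$ on $V_2$; by the directed version of Mader's splitting theorem the pairing can be chosen so that $D'$ inherits the 2-arc-strongness of $D$. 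Then Theorem~\ref{thm:multi semi SAD} applied to $D'$ gives a strong arc decomposition of $D'$, which can be lifted back to $D$ by splitting each virtual arc into the matched pair of $V_1$-incident arcs, in the color it inherits from $D'$.

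The main obstacle will be the sub-case where the auxiliary multi-digraph $D'$ happens to be isomorphic to one of the forbidden digraphs in Figure~\ref{fig-MD-exceptional}. In such situations one must show that the splitting-off at the $V_1$-vertices can be perturbed to avoid these exceptions, exploiting the degree slack at each $v\in V_1$ to reroute a pairing away from the exceptional configuration. This is precisely where the hypothesis ``at least $3$'' (rather than $2$) is essential, since any $V_1$-vertex has at least one spare in-arc and one spare out-arc available to modify the pairing; verifying that this reroute always exists, possibly through a direct construction for each of the finitely many exceptional $D'$, is the technical heart of the argument.
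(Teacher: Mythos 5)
The paper cites this theorem from Bang-Jensen and Wang \cite{bang2024} and does not reprove it, so there is no in-paper proof to compare with; but the paper's own machinery for the stronger Theorem~\ref{thm:2as} (critical path pairs, pending decompositions, Lemma~\ref{lem:(D-X)->D} and Lemma~\ref{lem:critical AC to SAD}) makes clear where your sketch has real gaps. Your good case is fine: when $D[V_2]$ is 2-arc-strong and not $S_4$, Theorem~\ref{thm:multi semi SAD} plus a greedy distribution of the $V_1$-incident arcs (exactly Lemma~\ref{lem:(D-X)->D} with $X=V_2$) gives the decomposition. The problems are in your bad case.

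First, the appeal to ``the directed version of Mader's splitting theorem'' is not justified. Complete splitting-off at $v\in V_1$ requires $d^-(v)=d^+(v)$, which the hypothesis does not provide, and the directed splitting-off lemmas that preserve global arc-connectivity only hold under Eulerian or other special local conditions at the split vertex, not for an arbitrary vertex of a 2-arc-strong digraph. Second, and more fundamentally, even if a 2-arc-strong auxiliary multidigraph $D'$ on $V_2$ could be obtained by complete splitting-off at every $V_1$-vertex, the lift-back step does not produce a spanning decomposition: Theorem~\ref{thm:multi semi SAD} colors the split arcs of $D'$ arbitrarily, so all split arcs at some $v$ may land in $A_1$, leaving $v$ with no arcs in $A_2$, in which case $(V,A_2)$ is not spanning. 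Nothing in your argument forces each $V_1$-vertex's split arcs to straddle both color classes. The correct move (and what the paper's framework does) is to split off only along a carefully chosen arc-disjoint path pair so that at most one split pair per $V_1$-vertex ends up in each color, while every $V_1$-vertex retains at least two unsplit in-arcs and two unsplit out-arcs with which to attach itself to both strong spanning subdigraphs; this is precisely where the degree-$\ge 3$ hypothesis and the critical/pending-decomposition machinery are used, and it is absent from your proposal.
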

   They presented an infinite family of split digraphs to demonstrate that being 2-arc-strong is not sufficient to ensure a strong arc decomposition in a split digraph. Additionally, they proposed the following open problems:
\begin{problem}\label{P_1}
    Does all but a finite number of 2-arc-strong semicomplete split digraphs have a strong arc decomposition?
\end{problem}
\begin{problem}\label{P_2}
    Does every 2-arc-strong split digraph with minimum degree at least 5 have a strong arc decomposition?
\end{problem}
   In this paper, we enhance the aforementioned theorem to make it in a consistent framework with semicomplete digraphs, semicomplete multidigraphs, and semicomplete compositions. Our main result is as follows:
\begin{theorem}\label{thm:2as}
       A 2-arc-strong split digraph $D=(V_1,V_2;A)$ has a strong arc decomposition if and only if $D$  is not isomorphic to any of the digraphs illustrated in Lemma~\ref{lem:counter example}, Lemma~\ref{lem:counter example2}, the Appendix, or their arc-reversed versions (reverse all arcs). 
   \end{theorem}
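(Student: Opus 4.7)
The plan is to treat the two directions separately. For the \emph{only if} direction, one must verify that each of the finitely many digraphs listed in Lemma~\ref{lem:counter example}, Lemma~\ref{lem:counter example2}, the Appendix, and their arc-reversed versions fails to admit a strong arc decomposition. Each such verification is a short case analysis on where each arc could lie, exploiting the fact that these exceptional digraphs have few vertices and that arcs at low-degree vertices of $V_1$ have forced placements.

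The bulk of the work lies in the \emph{if} direction. Let $D=(V_1,V_2;A)$ be a 2-arc-strong split digraph with $V_1$ independent and $D[V_2]$ semicomplete, not isomorphic to any exception. My plan is to split on the minimum semi-degree of vertices in $V_1$. If every vertex $v\in V_1$ satisfies $d^-(v)\ge 3$ and $d^+(v)\ge 3$, then the theorem of Bang-Jensen and Wang cited above immediately gives a strong arc decomposition. Otherwise, some vertex $u\in V_1$ has, say, $d^-(u)=2$; since $D$ is 2-arc-strong, its two in-arcs are forced into distinct parts of any valid decomposition. The construction then proceeds in two phases. First, obtain a strong arc decomposition $(B_1,B_2)$ of $D[V_2]$ via Theorem~\ref{thm:semi SAD} when $D[V_2]$ is 2-arc-strong and not isomorphic to $S_4$; in the residual sub-cases ($D[V_2]\cong S_4$, or $D[V_2]$ only 1-arc-strong) we use specific arcs between $V_1$ and $V_2$ to augment $D[V_2]$ and achieve a strong decomposition of $V_2\cup\{u\}$ directly. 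Second, distribute the remaining arcs incident to $V_1$, handling vertices with forced assignments first and then proceeding greedily so that every $v\in V_1$ ends up with at least one in-arc and at least one out-arc in each part.

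Strong connectivity of the two parts then follows from a short argument: since the restriction of each $B_i$ to $V_2$ is strong and each $v\in V_1$ has at least one in-arc and one out-arc in $B_i$, a strong path through $V_2$ extends uniquely to all of $V_1$. The main obstacle I anticipate is the interplay between forced cross-arc assignments and the internal structure of $D[V_2]$: when $D[V_2]$ is small or poorly connected, and when several vertices of $V_1$ have in- or out-degree exactly $2$, the forced distribution of cross arcs may conflict with any strong decomposition of $V_2$. In such scenarios one cannot freely choose the decomposition of $D[V_2]$; the cross arcs must instead be used to bridge weakly connected pieces on $V_2$, and the only configurations in which no consistent choice exists are precisely the exceptional digraphs listed in Lemma~\ref{lem:counter example}, Lemma~\ref{lem:counter example2}, and the Appendix. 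Carrying out this exhaustive case analysis—enumerating the possible degree sequences of $V_1$ against the possible sizes and arc-connectivities of $D[V_2]$, and checking that every configuration outside the exception list admits a valid greedy completion—will be the most labour-intensive part of the proof.
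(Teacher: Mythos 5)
Your proposal diverges substantially from the paper's method, and in its current form contains a genuine gap precisely at the point where the theorem is hard.

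The easy observations in your plan are fine: the ``only if'' direction is a finite check, the case where every $v\in V_1$ has in- and out-degree at least $3$ follows from Bang-Jensen--Wang, and the case where $D[V_2]$ is $2$-arc-strong with $|V_2|\geq 5$ is handled by decomposing $D[V_2]$ (Theorem~\ref{thm:semi SAD}) and then distributing the cross arcs, which is exactly Lemma~\ref{lem:(D-X)->D}. So far so good. The problem is your ``residual sub-cases.'' When $D[V_2]$ is $1$-arc-strong but not $2$-arc-strong, or not strong at all, it has \emph{no} strong arc decomposition, so there is nothing to do in your ``Phase~1.'' The two-phase plan — first decompose $D[V_2]$, then greedily distribute cross arcs so that each $v\in V_1$ gets one in-arc and one out-arc per side — structurally cannot handle this regime, because the cross arcs are needed \emph{inside} the decomposition to repair the arc-connectivity of $V_2$, not merely to attach pendant vertices of $V_1$ to an already-strong skeleton. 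Your proposal acknowledges this (``the cross arcs must instead be used to bridge weakly connected pieces on $V_2$''), but then simply asserts that ``the only configurations in which no consistent choice exists are precisely the exceptional digraphs,'' which is the content of the theorem, not a proof of it.

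The paper avoids this dead end with a different strategy. Rather than decomposing $D[V_2]$ separately and then handling $V_1$, it \emph{splits off} paths from $V_1$ to obtain a semicomplete multidigraph on $V_2$, so that cross arcs become ordinary arcs of the multidigraph and can participate in restoring $2$-arc-strength. The machinery that makes this sound — feasible arc sets, $(X,Y)_C$-critical path pairs (Definition~\ref{def:critical}), Lemma~\ref{lem:critical AC to SAD}, and the notion of a pending decomposition (Lemma~\ref{lem:pending decom}) — is precisely what guarantees that a strong arc decomposition of the contracted multidigraph lifts back to one of $D$. The paper then runs an explicit, terminating repair loop (procedures \ref{ope:a1}--\ref{ope:a8}, \ref{ope:b1}--\ref{ope:b6}, \ref{ope:c1}--\ref{ope:c2}, \ref{ope:d0}--\ref{ope:d5} and their starred mirrors) that either eliminates all cut-arcs of the contracted multidigraph, so Theorem~\ref{thm:multi semi SAD} applies, or exposes one of the forbidden structures of Lemma~\ref{lem:counter example}/\ref{lem:counter example2}. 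None of this machinery is present or replaced in your plan; to make your approach work you would have to reinvent an equivalent of the critical-path-pair iteration and prove it terminates at a classification, which is essentially the entire technical burden of the paper. As written, your proposal gives the right global shape of a proof but does not supply the mechanism that distinguishes the reparable cases from the exceptions.
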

   Our result also addresses and resolves Problem~\ref{P_1} and Problem~\ref{P_2}. Regarding Problem~\ref{P_2}, the answer is correct. In fact, there are infinitely many counterexamples, but the number becomes finite when restricted to semicomplete split digraphs. As for Problem~\ref{P_1}, the answer is negative. Moreover, we provide the unique counterexample $(iv)^*\times (iv)$ which is illustrated in Appendix.

\section{Preliminaries}\label{Sec_2}
Throughout this paper, we primarily follow the standard terminology and notation as in \cite{BG09}. In this section, we first provide some necessary definitions to ensure the paper is self-contained.

A \emph{directed graph} (or just a \emph{digraph}) $D$ consists of a non-empty finite set $V(D)$ of elements called \emph{vertices} and a finite set $A(D)$ of ordered pairs of distinct vertices called \emph{arcs}. If we allow $A(D)$ to be a multiset, i.e., contains multiple copies of the same arc (often, called \emph{multiple} or \emph{parallel arcs}), then $D$ is a \emph{directed multigraph} or {\em multi-digraph}. A (multi-)digraph is {\em semicomplete} if it has no pair of nonadjacent vertices.

For a digraph $D=(V_D,A_D)$ and an arc set $A_1$, we use the symbol $D+A$ to denote the multi-digraph $(V\cup V(A), A_D\cup A_1)$.

Let $D$ be a (multi-)digraph and let $X$ be a subset of $V(D)$. We use $D[ X]$\label{def:D[X]} to denote the  (multi-)digraph induced by $X$. If $X \subseteq V(D)$, the (multi-)digraph $D-X$ is the subdigraph induced by $V(D)-X$, i.e., $D-X = D[ V(D)- X ]$. For a subdigraph $H$ of $D$, we define $D-H=D-V(H)$. We may use $x$ to represent single vertex set $\{x\}$ and write $v\in H$ instead of $v\in V(H)$. A cycle and a path always mean a directed cycle and path. For subsets $X,Y$ of $V(D)$, a path $P$ is an $(X,Y)$-path if it starts at a vertex $x$ in $X$ and ends at a vertex $y$ in $Y$ such that $V(P)\cap(X\cup Y)=\{x,y\}$. For a path $P$, we use $P[x,y]$ to denote the subpath of $P$ from $x$ to $y$.
 	
A digraph $D$ is a {\em split digraph} if $V(D)$ can be partitioned into two sets $V_1$ and $V_2$ such that $V_1$ is an independent set and $V_2$ induces a semicomplete digraph. We will denote a split digraph $D$ by $D=(V_1, V_2; A)$, where the order in the union matters. And we use $A(V_1,V_2)$ to denote the arcs between $V_1$ and $V_2$.
Given a split digraph $D = (V_1, V_2; A)$, we say the vertex partition $V(D) = V_1 \cup V_2$ is a \emph{maximal partition} if there is no vertex partition $V = V'_1 \cup V'_2$ such that $V'_1$ is an independent set, $V'_2$ induces a semicomplete digraph and $V'_2 \supsetneq V_2$. That is to say, $V_2$ is a maximal vertex subset such that the induced digraph of it is semicomplete. 

A digraph $D$ is {\em strong} if for every pair $x,y$ of distinct vertices in $D$, there exists an $(x,y)$-path and a $(y,x)$-path. A digraph $D$ is {\em $k$-arc-strong} if $D-W$ is also strong for any subset $W \subseteq A(D)$ with a size less than $k$.  An arc $e$ in a strong digraph $D$ is called a {\em cut-arc} if $D-e$ is not strong. 
	
A {\em strong component} of a digraph $D$ is a maximal induced subdigraph of $D$ which is strong. For every digraph $D$, we can label its strong components $D_1,\ldots{},D_t$ ($t\geq 1$) such that there is no arc from $D_j$ to $D_i$ unless $j\leq i$. We call such an ordering an {\em acyclic ordering} of the strong components of $D$. For a semicomplete digraph $D$, it is easy to see that the acyclic ordering  $D_1,\ldots{},D_t$ ($t\geq 1$) is unique and we call $D_1$ (resp., $D_t$) the {\em initial} (resp., {\em terminal}) strong component of $D$.

\subsection{Useful tools from known results}
A {\em vertex decomposition} of a digraph $D$ is a partition of its vertex set $V(D)$ into disjoint sets $(U_1,\ldots,U_{\ell})$ where $l\geq 1$. The {\em index} of a vertex $v$ in the decomposition, denoted by $ind(v)$, is the index $i$ such that $v\in U_i$. An arc $xy$ is called a {\em backward arc} if $ind(x)>ind(y)$. A {\em nice decomposition} of a digraph $D$, introduced in \cite{bangJGT102}, is a vertex decomposition such that $D[ U_i]$ is strong for all $i\in[l]$, and the set of cut‐arcs of $D$ is exactly the set of backward arcs. A {\em natural ordering} $(x_1y_1,\dots,x_ry_r)$ of its backward arcs is the ordering of these arcs in decreasing order according to the index of their tails.

Bang-Jensen, Havet and Yeo \cite{bangJGT102} provided the following theorem.

\begin{theorem}\cite{bangJGT102}\label{thm:nice-decom}
    Every strong semicomplete digraph $D$ of order at least 4 admits a unique nice decomposition. Furthermore, the nice decomposition can be constructed in polynomial time.
\end{theorem}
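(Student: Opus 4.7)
The plan is to identify the nice decomposition as the strong components of $D$ with all cut-arcs removed, suitably ordered. Let $E$ be the set of cut-arcs of $D$, set $D' = D - E$, let $H_1, \ldots, H_\ell$ be the strong components of $D'$, and put $U_i = V(H_i)$. If $E = \emptyset$ (so $D$ is $2$-arc-strong), the trivial partition with $\ell = 1$ works. Otherwise the candidate nice decomposition has parts $U_1, \ldots, U_\ell$ in an order to be pinned down below.

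For existence of niceness, first each $D[U_i]$ is strong because $D[U_i] \supseteq (D')[U_i] = H_i$, so the first defining condition is automatic. Second, cut-arcs must coincide with backward arcs. A non-cut arc $xy$ lies in $D'$, so by acyclicity of the condensation of $D'$ it is forward. Conversely, for a cut-arc $xy$, removing $xy$ disconnects $x$ from $y$, so $x$ cannot reach $y$ in $D - xy$ and hence not in $D' \subseteq D - xy$; this forces $y$'s component to strictly precede $x$'s, so $xy$ is backward. The remaining delicate point is to select an acyclic ordering of the $H_i$'s under which all cut-arcs are simultaneously backward.

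To this end I would introduce an auxiliary digraph $G$ on $\{U_1, \ldots, U_\ell\}$ with an arc $U_a \to U_b$ in $G$ whenever $D$ contains either a non-cut arc from $U_a$ to $U_b$ or a cut-arc from $U_b$ to $U_a$. A nice ordering of the components is then precisely a topological sort of $G$. For uniqueness, note that in any nice decomposition $(W_1, \ldots, W_m)$ the backward arcs equal $E$, so each $W_i$ is strong in $D' = D - E$ and all inter-part arcs of $D'$ go forward; this forces the $W_i$'s to be exactly the strong components of $D'$, and their order to be a topological sort of $G$. Uniqueness of the nice decomposition is therefore reduced to showing that $G$ is a transitive tournament, which has a unique topological sort; semicompleteness of $D$ already gives at least one arc of $G$ between every pair of components.

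The main obstacle is ruling out cycles in $G$. A $2$-cycle between $U_a, U_b$ could arise from (i) non-cut arcs in both directions between $U_a$ and $U_b$, which is impossible because it would merge them into one strong component of $D'$; (ii) a non-cut arc and a same-direction cut-arc, which is impossible because the non-cut arc combined with the strong connectivity of $D[U_a]$ and $D[U_b]$ yields a bypass of the cut-arc, contradicting its being cut; or (iii) oppositely directed cut-arcs between $U_a, U_b$. Case (iii) is the delicate one and is where the order-$4$ hypothesis enters, since the canonical obstruction here is the directed $3$-cycle $C_3$, whose three cut-arcs form a $3$-cycle in $G$, and the hypothesis $|V(D)| \geq 4$ is what rules out this and related configurations. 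Ruling out longer cycles in $G$ reduces to the $2$-cycle analysis by extracting a minimal cycle in $G$ and exploiting its tournament structure. Once $G$ is established as a transitive tournament, its unique Hamiltonian path produces the unique ordering; polynomial-time construction follows because identifying cut-arcs, computing strong components, and topologically sorting are each polynomial and the recipe above is fully explicit.
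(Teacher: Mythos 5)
This statement is imported from Bang-Jensen--Havet--Yeo \cite{bangJGT102}; the present paper gives no proof of it, so your argument can only be judged on its own terms. Your overall framework is sound: identifying the parts as the strong components of $D'=D-E$ (with $E$ the set of cut-arcs), checking that each part is strong and that the cut-arcs are exactly the inter-part arcs that must go backward, and reducing both existence and uniqueness to the assertion that the auxiliary component digraph $G$ is an acyclic (hence transitive) tournament are all correct, as are cases (i) and (ii) of your $2$-cycle analysis.

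The genuine gap is that the acyclicity of $G$ --- which is where essentially all of the content of the theorem lives --- is asserted rather than proved. Case (iii), two oppositely directed cut-arcs between $U_a$ and $U_b$, is dispatched with ``the hypothesis $|V(D)|\geq 4$ is what rules out this and related configurations'' and no argument; and the claim that longer cycles of $G$ ``reduce to the $2$-cycle analysis'' is false: once $2$-cycles are excluded, $G$ is a tournament, and a shortest cycle in a tournament has length $3$, so you must separately exclude a directed triangle of $G$ on three \emph{distinct} components, which you never address. That this step requires real work is shown by the semicomplete digraph on $\{x,y,u\}$ with arcs $xy,\,yu,\,uy,\,ux$: its cut-arcs are $xy,\,yu,\,ux$, the strong components of $D'$ are the three singletons, and $G$ is exactly a directed triangle, so no nice decomposition exists --- consistent with the order-$4$ hypothesis but fatal to any proof that never actually uses it. A complete argument must show concretely how a fourth vertex (adjacent to all of $x,y,u$ by semicompleteness) forces a bypass of one of the offending cut-arcs; nothing of that sort appears in your proposal, so the central claim remains unproved.
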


Moreover, the authors of \cite{bangJGT102} gave a rough construction of the nice composition. 

\begin{proposition}\cite{bangJGT102} \label{prop-nicedecom}
	Let $D$ be a strong semicomplete digraph of order at least 4. Suppose that $(U_1,\ldots,U_{\ell})$ is the nice decomposition of $D$ and  $(x_1y_1,\ldots,x_ry_r)$ is the natural ordering of the backward arcs. Then the following statements hold.
		
	(i)  $x_1\in U_{\ell}$ and $y_r\in U_1$;
		
	(ii) $ind(y_{j+1})<ind(y_j)\leq ind(x_{j+1})< ind(x_j)$ for all $j\in[r-1]$ and $ind(y_{j+1})\leq ind(x_{j+2})<ind(y_j)$ for all $j\in[r-2]$.
\end{proposition}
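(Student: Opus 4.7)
The plan is to exploit the two defining properties of a nice decomposition: each $D[U_i]$ is strong, and the cut-arcs of $D$ are exactly the backward arcs. Recall that an arc $xy$ is a cut-arc if and only if $D-xy$ contains no $x$-to-$y$ path. For brevity I write $k_j=ind(x_j)$ and $m_j=ind(y_j)$, so $k_j>m_j$ for every backward arc and $k_1\ge k_2\ge\cdots\ge k_r$ by the natural ordering.

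For part (i), I would first prove $x_1\in U_\ell$ directly: if $k_1<\ell$, then no backward arc has its tail in $U_{k_1+1}\cup\cdots\cup U_\ell$, so every arc leaving this set either stays internal or is a forward arc landing in a larger index that is still inside the set. This leaves no arc from $\{U_{k_1+1},\ldots,U_\ell\}$ into $\{U_1,\ldots,U_{k_1}\}$, contradicting strong connectivity of $D$. The assertion $y_r\in U_1$ I would defer until after (ii) is in hand, since once the strict monotonicity $m_1>m_2>\cdots>m_r$ is known, a symmetric argument applied to arcs \emph{entering} $\{U_1,\ldots,U_{m_r-1}\}$ forces $m_r=1$.

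For part (ii), each inequality is proved by contradiction by constructing a bypass path witnessing that one of the arcs $x_jy_j$, $x_{j+1}y_{j+1}$, or $x_{j+2}y_{j+2}$ is not a cut-arc. For instance, to obtain $k_{j+1}<k_j$: if $k_{j+1}=k_j=k$, then $x_j$ and $x_{j+1}$ both lie in the strong subdigraph $D[U_k]$, and I would build an $x_{j+1}$-to-$y_{j+1}$ path in $D-x_{j+1}y_{j+1}$ by traversing $D[U_k]$ from $x_{j+1}$ to $x_j$, firing $x_jy_j$, and then producing a $y_j$-to-$y_{j+1}$ path using the remaining (still-intact) backward arcs and the strong subdigraphs at the relevant lower indices. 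The inequality $m_j\le k_{j+1}$ is obtained analogously: if $m_j>k_{j+1}$, then $y_j$ sits strictly above $x_{j+1}$, and the bypass of $x_{j+1}y_{j+1}$ is routed by ascending via forward arcs from $x_{j+1}$ into $U_{k_j}$, firing $x_jy_j$, and descending to $y_{j+1}$. The remaining inequalities $m_{j+1}<m_j$ and $m_{j+1}\le ind(x_{j+2})<m_j$ follow the same template, with $x_{j+2}y_{j+2}$ playing the role of substitute arc in the last chain.

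The main obstacle is combinatorial bookkeeping rather than any single conceptual hurdle: each bypass path must be written out so that the removed arc is never re-used, and the construction must respect both the direction of the forward arcs in the quotient DAG and the availability of the still-untouched backward arcs. The natural ordering on the backward arcs is crucial here, ensuring that whenever a substitute arc must be invoked, its tail sits at the correct altitude. I expect the hardest single step to be the descent phase of the $m_j\le k_{j+1}$ proof, where one must produce a path from $y_j$ down to $y_{j+1}$ without the use of $x_{j+1}y_{j+1}$; this is precisely the point at which one must distinguish sub-cases based on the relative positions of the other backward arcs' endpoints.
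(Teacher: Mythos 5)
The paper does not prove this proposition; it is imported verbatim from \cite{bangJGT102} (Bang-Jensen, Havet and Yeo), so there is no in-paper argument to compare yours against, and your proposal has to stand on its own. On that basis: your part (i) argument is correct and complete in outline (the set $U_{k_1+1}\cup\cdots\cup U_\ell$ has no out-arc to lower indices because any such arc would be a backward arc with tail index exceeding $ind(x_1)$, contradicting strongness; the dual argument after (ii) gives $y_r\in U_1$), and your bypass-path argument for $ind(x_{j+1})<ind(x_j)$ can be made to work provided you are willing to contradict \emph{either} of the two arcs $x_jy_j$, $x_{j+1}y_{j+1}$ depending on the orientation of the arc between $y_j$ and $y_{j+1}$ -- the version you state, which always bypasses $x_{j+1}y_{j+1}$, gets stuck when the arc between the heads points from $y_{j+1}$ to $y_j$.

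The genuine gap is in the inequality $ind(y_j)\le ind(x_{j+1})$, where the mechanism you propose cannot be completed as described. Assume $ind(y_j)>ind(x_{j+1})$. Any backward arc with tail index strictly between $ind(x_{j+1})$ and $ind(x_j)$ would have to sit between positions $j$ and $j+1$ in the natural ordering, so no such arc exists; in particular the arc $y_jy_{j+1}$ you need for the ``descent phase'' is provably absent, and more generally (using the already-established strict decrease of the head indices for $i\le j$) \emph{no} backward arc crosses from $U_{ind(x_{j+1})+1}\cup\cdots\cup U_\ell$ into $U_1\cup\cdots\cup U_{ind(x_{j+1})}$. The correct contradiction is therefore with the strong connectivity of $D$ itself -- exactly the partition argument you used for part (i) -- and not with $x_{j+1}y_{j+1}$ failing to be a cut-arc; note this forces the proof of (ii) to proceed by induction on $j$, since it consumes the head-monotonicity for smaller indices. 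The final inequality $ind(x_{j+2})<ind(y_j)$ is harder still: under its negation at least two backward arcs cross the relevant partition, so neither a clean partition contradiction nor your bypass template applies directly (for instance, the arc $x_{j+1}x_{j+2}$ needed to ``descend'' between consecutive tails is again provably absent when their indices differ), and this step requires a finer analysis of the strong components of $D$ minus a cut-arc. As it stands, the proposal identifies the right toolkit but leaves precisely these two inequalities, which carry most of the content of the proposition, unestablished.
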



Next, we introduce the concept of {\em splitting-off}~(for convenience, we have employed this term incorrectly in terms of its grammatical function sometimes), a useful operation for obtaining a semicomplete multi-digraph on $V_2$ from a split graph $D=(V_1, V_2; A)$. 

\begin{definition}\label{def-splitting}
	Let $D=(V_1, V_2; A)$ be a split digraph and let $P$ be a path with both end-points in $V_2$. For each vertex $t\in V(P)\cap V_1$, we call splitting-off the pair $(ut, tv)$ at $t$ if we replace $ut, tv \in P$ with a new arc $uv$ (or a multi-arc if $uv$ already exists). The arc $uv$ is called a {\em splitting arc}. For all $t\in V(P)\cap V_1$, if we split off all such pairs at $t$, then we call this operation by splitting-off the path $P$. The reverse operation where we replace a splitting arc with the two original arcs is called  {\em lifting }the arc.
\end{definition}

We can derive various semicomplete multi-digraphs from the splitting-off operation on $V_2$. Theorem \ref{thm:multi semi SAD} ensures the existence of strong arc decompositions of multi-digraphs. Bang-Jensen and Wang \cite{bang2024} gave the following lemma.

\begin{lemma}\label{lem:(D-X)->D}\cite{bang2024}
	Let $D$ be a multi-digraph and $X$ a subset of $V(D)$ such that every vertex of $D-X$ has two in-neighbors and two out-neighbors in $X$. If $X$ has a strong arc decomposition then $D$ has a strong arc decomposition.
\end{lemma}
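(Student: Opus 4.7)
The plan is to extend a given strong arc decomposition $(A_1, A_2)$ of the induced subdigraph $D[X]$ (which the statement assumes) to a strong arc decomposition of the whole multi-digraph $D$ by routing every vertex outside $X$ through $X$ in both parts simultaneously.

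First, for each vertex $v \in V(D) \setminus X$, I would use the hypothesis to select two distinct in-neighbors $x_1(v), x_2(v) \in X$ of $v$ and two distinct out-neighbors $y_1(v), y_2(v) \in X$ of $v$ (the arcs $x_i(v)v$ and $vy_i(v)$ are chosen as specific copies, which is possible even in the multi-digraph setting since the hypothesis gives two distinct neighbors on each side). Put the arcs $x_1(v)v$ and $vy_1(v)$ into the extension $A_1^*$ of $A_1$, and the arcs $x_2(v)v$ and $vy_2(v)$ into the extension $A_2^*$ of $A_2$. All remaining arcs of $D$ not already assigned (arcs between $X$ and $V(D)\setminus X$ that were not selected, and all arcs inside $V(D)\setminus X$) can be distributed to $A_1^*$ or $A_2^*$ arbitrarily. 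This yields a partition $(A_1^*, A_2^*)$ of $A(D)$.

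It remains to check that the spanning subdigraphs $D_i = (V(D), A_i^*)$ are strong for $i=1,2$. I would verify this by showing that for any ordered pair $(u,w)$ of distinct vertices, a $(u,w)$-path exists in $D_i$, handling four cases by membership in $X$: (a) if $u,w\in X$, the path exists already in $D[X]$ using arcs of $A_i$, since $(A_1,A_2)$ is a strong arc decomposition of $D[X]$; (b) if $u\in X, w\notin X$, follow a $(u, x_i(w))$-path in $D[X]$ using $A_i$, then traverse the arc $x_i(w)w\in A_i^*$; (c) if $u\notin X, w\in X$, traverse the arc $uy_i(u)\in A_i^*$, then a $(y_i(u), w)$-path inside $D[X]$ using $A_i$; (d) if $u,w\notin X$, concatenate the routes from (c) and (b) at some vertex of $X$.

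This is the entire argument; there is essentially no obstacle. The only point that requires the full strength of the hypothesis is the existence of two \emph{distinct} in-neighbors and two \emph{distinct} out-neighbors in $X$, which is precisely what lets us separate the mandatory arcs at each external vertex between the two parts of the decomposition while preserving strong connectivity in each part.
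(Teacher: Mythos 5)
Your proof is correct, and it is the natural direct argument for this fact. Note that the paper does not prove Lemma~\ref{lem:(D-X)->D} itself but cites it from Bang--Jensen and Wang (arXiv:2309.06904); your construction (route every external vertex $v$ through $X$ in both halves via the two distinct in-neighbors and two distinct out-neighbors, then dump the leftover arcs anywhere) is precisely the standard proof, and your four-case verification of strong connectivity is complete. One point worth stating slightly more explicitly: the four chosen arcs at $v$ are pairwise distinct because $x_1(v)\neq x_2(v)$ and $y_1(v)\neq y_2(v)$ (and in-arcs at $v$ are never out-arcs at $v$), so $A_1^*$ and $A_2^*$ really are disjoint even in the multi-digraph setting; and these arcs all leave $X$, so they cannot collide with $A_1\cup A_2\subseteq A(D[X])$. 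You already gesture at this, and it holds, so there is no gap.
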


\subsection{Finding a strong arc decomposition}
We will now prove two important lemmas that will assist us in finding strong arc decompositions of a 2-arc-strong split digraph.

\begin{definition}
    Let $D = (V_1,V_2;A)$ be a split digraph. We say two arc-disjoint strong subdigraphs $D_1$ and $D_2$ constitute a \emph{pending decomposition} of $D$ if, for each $i \in [2]$, we have $V_2 \subseteq V(D_i)$ and for any vertex $t \in V(D_i) \setminus V(D_{3-i})$, $t$ has at least one in-arc and one out-arc in $A(D) \setminus A(D_i)$.
\end{definition}

\begin{lemma}\label{lem:pending decom}
    If a 2-arc-strong split digraph $D = (V_1,V_2;A)$ has a pending decomposition, then $D$ has a strong arc decomposition.
\end{lemma}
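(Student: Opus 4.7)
The plan is to construct a strong arc decomposition $(A_1, A_2)$ of $D$ directly from the pending decomposition $(D_1, D_2)$. I would initialize $A_1 := A(D_1)$ and $A_2 := A(D_2)$, which are disjoint by hypothesis, and let $A' := A(D) \setminus (A_1 \cup A_2)$ denote the arcs still to be assigned. The key structural observation is that, since $V_1$ is independent and $V_2 \subseteq V(D_1) \cap V(D_2)$, every arc incident to a vertex $t \in V_1$ has its other endpoint already inside both $V(D_1)$ and $V(D_2)$.

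For each vertex $t \in V(D_1) \setminus V(D_2)$ (necessarily in $V_1$), apply the pending condition to select an in-arc $e^-_t$ and an out-arc $e^+_t$ of $t$ in $A(D) \setminus A(D_1)$. Since $t \notin V(D_2)$, these arcs cannot belong to $A(D_2)$ either, so they lie in $A'$; I move them into $A_2$. Symmetrically, for each $t \in V(D_2) \setminus V(D_1)$, I transfer an analogous pair of arcs into $A_1$. If some vertex of $V_1$ happens to lie in neither $V(D_1)$ nor $V(D_2)$, the 2-arc-strongness of $D$ provides at least two in-arcs and two out-arcs at $t$ (all going to $V_2$, and all in $A'$), and I assign one in-arc and one out-arc to each of $A_1, A_2$. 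Any leftover arcs of $A'$ are then distributed arbitrarily between $A_1$ and $A_2$.

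To verify that $(V, A_1)$ is strong, note that it contains $D_1$ as a subdigraph, so all of $V(D_1)$ is strongly connected via arcs of $A_1$. For each $t \in V \setminus V(D_1)$ (necessarily in $V_1$), the construction guarantees an in-arc from $V_2$ and an out-arc to $V_2$ in $A_1$, which attaches $t$ to the already strong subdigraph on $V(D_1)$. Hence $(V, A_1)$ is strong, and by symmetry $(V, A_2)$ is strong, so $(A_1, A_2)$ is a strong arc decomposition.

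I expect the only delicate point is ensuring that the pending arcs we transfer into $A_{3-i}$ actually exist as unassigned arcs rather than colliding with arcs already placed in $A(D_{3-i})$; this is exactly what the pending condition is designed to guarantee, combined with the independence of $V_1$ (which rules out the in-/out-arcs of $t$ ever lying in $A(D_{3-i})$ when $t \notin V(D_{3-i})$). Once that bookkeeping is in place, the rest of the proof is just the trivial fact that adjoining to a strong subdigraph a vertex with an in-neighbor and an out-neighbor inside it preserves strongness.
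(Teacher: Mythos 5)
Your proof is correct and follows essentially the same strategy as the paper's: extend $D_1$ and $D_2$ by the pending in/out-arcs at vertices in $V(D_i)\setminus V(D_{3-i})$, then handle the remaining vertices of $V_1$ using 2-arc-strongness. The only difference is presentational — the paper delegates the last step (vertices of $V_1$ in neither $D_1$ nor $D_2$) to the black-box Lemma~\ref{lem:(D-X)->D}, while you inline the same argument, which is equally valid.
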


\begin{proof}
    Since $D$ is 2-arc-strong, we have $d^+(v)\geq 2$ and $d^-(v)\geq 2$ for any $v\in V(D)$. Let $D_1$ and $D_2$ be a pending decomposition of $D$. We first show that $G=D[V(D_1) \cup V(D_2)]$ has a strong arc decomposition.
    
    For any $t \in V(D_1) \setminus V(D_2)$, $t$ has at least one in-arc and one out-arc, denoted as $t^-t$ and $tt^+$, respectively, in $A(D) \setminus A(D_1)$. Let $A_2$ be the arc set obtained by adding such $t^-t$ and $tt^+$ for each $t \in V(D_1) \setminus V(D_2)$ to $A(D_2)$. Similarly, do the same for all $s \in V(D_2) \setminus V(D_1)$ and Let $A_1$ be the corresponding arc set. Then $A_1 \cap A_2 = \emptyset$ and $G[A_i]$ is strong for $i \in [2]$. We can freely assign the remaining arcs in $A(G) \setminus (A_1 \cup A_2)$ to $A_1$ or $A_2$ and then find the desired strong arc decomposition of $G=D[V(D_1) \cup V(D_2)]$. Moreover, for any $v \in V(D \setminus G) \subseteq V_1$, $v$ has at least two in-neighbors and two out-neighbors in $V(G)$. By Lemma~\ref{lem:(D-X)->D}, $D$ has a strong arc decomposition.
\end{proof}

Since $D=(V_1, V_2; A)$ is 2-arc-strong, there are two arc-disjoint $(X,Y)$-paths $Q_1, Q_2$ for any two disjoint proper subsets $X, Y\subset V_2$. 

We say an arc set $C \subseteq A(V_1,V_2)$ is \emph{feasible} if there is a partition $C = S_1 \cup S_2 \cup ... \cup S_k$, where each $S_i$ has the form $\{v_ix_i, x_iu_i\}$ with $v_i,u_i \in V_2, v_i\neq u_i$ and $x_i \in V_1$ and $x_i\neq x_j$ for $i\neq j$. Let $V_1(C)$ denote the vertices in $V_1$ that have an incident arc in $C$, i.e. $V_1(C)=\{x_1,\dots, x_k\}$; and let $\bar{C}$ be the arc (multi-)set by splitting-off $S_i$ for all $i\in[k]$. Denote by $A'(Q_i)$ the set of arcs between $V_1$ and $V_2$ in $Q_i$, let $A_{Q_1, Q_2}(C) = (A'(Q_1)\cup A'(Q_2)) \setminus C$ and $V_{Q_1,Q_2}(C) = V_1(A_{Q_1, Q_2}(C))$. When $C = \emptyset$, we simply use $A_{Q_1,Q_2}$, $V_{Q_1,Q_2}$ to denote $A_{Q_1,Q_2}(\emptyset)$, $V_{Q_1,Q_2}(\emptyset)$ respectively. We use $V_1(Q_i)$ to denote $V_1\cap V(Q_i)$.
\begin{definition}\label{def:critical}
    Let $D=(V_1, V_2; A)$ be a split digraph and let $X,Y$ be two disjoint proper subsets of $V_2$. We say an arc-disjoint $(X,Y)$-path pair $\{Q_1,Q_2\}$ is \emph{$(X,Y)_{C}$-critical}, in which $C$ is feasible, if $V_{Q_1,Q_2}(C)\cap V_1(C)=\emptyset$ and there is no arc-disjoint $(X,Y)$-path pair $\{Q_3,Q_4\}$ such that $A_{Q_3, Q_4}(C) \subsetneq A_{Q_1, Q_2}(C)$.
\end{definition}

\begin{remark}
    For an $(X,Y)_{C}$-critical path pair $\{Q_1,Q_2\}$, if there exists $v\in V_1(Q_1)\cap V_1(Q_2)$, we have $v\notin V_1(C)$. Consequently, $A'(Q_i)\setminus C$ is feasible for $i=1,2$. Additionally, for any arc-disjoint $(X,Y)$-path pair $\{Q_1,Q_2\}$ such that $V_{Q_1,Q_2}(C)\cap V_1(C)=\emptyset$, we can always find an $(X,Y)_{C}$-critical path pair $\{Q_3,Q_4\}$ such that $A_{Q_3, Q_4}(C) \subseteq A_{Q_1, Q_2}(C)$.
\end{remark}

Let $D=(V_1,V_2;A)$ be a split digraph and $C$ a feasible set. For an $(X,Y)_{C}$-critical path pair $\{Q_1,Q_2\}$, we use $\bar{A}_{Q_1,Q_2}(C)$ to denote a (multi-)set obtained by adding $\overline{A'(Q_1)\setminus C}$ to $\overline{A'(Q_2)\setminus C}$.

\begin{lemma}\label{lem:critical AC to SAD}
    Let $D=(V_1,V_2;A)$ be a split digraph and $C$ a feasible set. If $\{Q_1,Q_2\}$ is an $(X,Y)_{C}$-critical path pair such that $D[V_2]+\bar{A}_{Q_1,Q_2}(C)+\bar{C}$ has a strong arc decomposition, then $D$ has a strong arc decomposition.
\end{lemma}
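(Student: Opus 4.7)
The plan is to lift the given SAD of the multi-digraph $M := D[V_2] + \bar{A}_{Q_1,Q_2}(C) + \bar{C}$ back into $D$, producing a pending decomposition of $D$ to which Lemma~\ref{lem:pending decom} applies. Write $(H_1, H_2)$ for the SAD of $M$. Each splitting arc $e = uv$ of $M$ arises from a unique pair $\{ut, tv\}$ with $t \in V_1(C) \cup V_{Q_1,Q_2}(C)$; if $e \in H_i$, replace $e$ in $H_i$ by the two arcs $ut, tv$ to obtain a subdigraph $H_i'$ of $D$.

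First I would verify that $H_1', H_2'$ are arc-disjoint strong subdigraphs of $D$ containing $V_2$. Arc-disjointness holds because each splitting arc comes from a unique pair and no $D$-arc is shared across distinct pairs, using the feasibility of $C$ and of $A'(Q_j) \setminus C$ together with $V_{Q_1,Q_2}(C) \cap V_1(C) = \emptyset$. Strongness is preserved by the lift, since any walk in $H_i$ through $uv$ extends to a walk in $H_i'$ through $u \to t \to v$, and $t$ itself reaches and is reached from every other vertex via $v$ and $u$. For $(H_1', H_2')$ to be a pending decomposition of $D$, I need, for every $t \in V(H_i') \setminus V(H_{3-i}')$ (necessarily in $V_1$), a free in-arc and out-arc of $t$ in $A(D) \setminus A(H_i')$. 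When $t \in V_1(C)$, or when $t \in V_{Q_1,Q_2}(C)$ lies on only one of $Q_1, Q_2$, the lift places exactly one in-arc and one out-arc of $t$ into $H_i'$, so the 2-arc-strongness and simplicity of $D$ furnish the needed free arcs.

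The delicate case is a vertex $t \in V_1(Q_1) \cap V_1(Q_2) \setminus V_1(C)$ whose two splitting arcs $u_1v_1$ and $u_2v_2$ both lie in the same $H_i$, since this puts all four arcs $u_1t, u_2t, tv_1, tv_2$ into $H_i'$ and, when $d^\pm(t) = 2$ in $D$, leaves no free arc at $t$. I expect this to be the main technical obstacle. To resolve it, I would invoke the criticality of $(Q_1, Q_2)$: the semicompleteness of $D[V_2]$ supplies arcs between $\{u_1, v_2\}$ and between $\{u_2, v_1\}$ in some orientation, and a careful case analysis shows that in the ``compatible'' orientations one can reroute both paths to bypass $t$ using only $V_2$-internal arcs, yielding an arc-disjoint $(X,Y)$-path pair with strictly smaller $A_{Q_1,Q_2}(C)$ and contradicting criticality. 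In the remaining ``incompatible'' orientations, I would perform a local swap inside $M$ that exchanges $u_2v_2$ between $H_1$ and $H_2$ (possibly trading a companion arc along a suitable alternating structure to maintain strongness of both parts), so that after the lift each splitting arc at $t$ lies in a different $H_i'$, making $t$ belong to both $V(H_1')$ and $V(H_2')$ and voiding the pending condition at $t$. With the pending-decomposition property thus established for $(H_1', H_2')$, Lemma~\ref{lem:pending decom} immediately yields a strong arc decomposition of $D$, concluding the proof.
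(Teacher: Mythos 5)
Your overall strategy matches the paper: lift the strong arc decomposition of $M = D[V_2]+\bar{A}_{Q_1,Q_2}(C)+\bar{C}$ back to subdigraphs $H_1', H_2'$ of $D$, show these form a pending decomposition, and invoke Lemma~\ref{lem:pending decom}. You also correctly isolate the one delicate situation: a vertex $t \in V_1(Q_1)\cap V_1(Q_2)\setminus V_1(C)$ whose two splitting arcs land in the same part, producing four arcs at $t$ in $H_i'$.

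Where you diverge, however, there is a genuine gap. Your resolution splits into ``compatible'' and ``incompatible'' orientations of $V_2$-arcs and, in the latter, proposes a vaguely specified swap of a splitting arc between $H_1$ and $H_2$ ``trading a companion arc along a suitable alternating structure to maintain strongness.'' Neither branch is carried out, and the swap argument in particular is unjustified --- moving a splitting arc from $H_i$ to $H_{3-i}$ has no reason to preserve strongness of $H_i$ in general, and ``suitable alternating structure'' names a hope, not an argument. The paper's proof of exactly this claim is shorter and case-free: since $H_i'$ and $H_{3-i}'$ are both strong, each contains an $(X,Y)$-path (say $P_i$ and $P_{3-i}$), and these are automatically arc-disjoint. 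Because $P_i$ is a path, it traverses $t$ at most once, so at most two of the four arcs at $t$ lie on $P_i$; none lie on $P_{3-i}$, since $t\notin V(H_{3-i}')$. Hence $\{P_i, P_{3-i}\}$ is an arc-disjoint $(X,Y)$-path pair with $A_{P_i,P_{3-i}}(C)\subsetneq A_{Q_1,Q_2}(C)$, contradicting criticality directly --- no semicompleteness of $D[V_2]$, no orientation case analysis, and no SAD modification are needed. You reached for criticality but did not notice that the two strong spanning subdigraphs you already built hand you the competing path pair for free. Also note, as a minor point, that your phrase ``the 2-arc-strongness and simplicity of $D$ furnish the needed free arcs'' imports a hypothesis not stated in this lemma; it is inherited from Lemma~\ref{lem:pending decom}'s hypotheses, which you should say explicitly.
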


\begin{proof}
    Suppose that $D[V_2]+\bar{A}_{Q_1,Q_2}(C)+\bar{C}$ has a strong arc decomposition $D_1 = (V_2, A_1)$ and $D_2 = (V_2, A_2)$. Then we lift all splitting arcs in $A_1$ and $A_2$ to obtain the corresponding strong subdigraphs, $D'_1$ and $D'_2$, of $D$. 
    
    If there is some vertex $t\in V_1$ only contained in $D'_i$, we have the following claim.
    
	\begin{claim}
		$d^+_{D'_i}(t)=d^-_{D'_i}(t) = 1$.
	\end{claim}
 
	\begin{proof}
		Since $t$ appears after lifting, we have $d^+_{D'_i}(t)=d^-_{D'_i}(t) \in\{1,2\}$. Suppose that $d^+_{D'_i}(t)=d^-_{D'_i}(t)= 2$, then there are 4 arcs $at,tb,ct, td\in A_{Q_1,Q_2}(C)$ in $D'_i$. Since $D'_i$ and $D'_{3-i}$ are strong, there is an $(X,Y)$-path $P_i$ in $D'_i$ and an $(X,Y)$-path $P_{3-i}$ in $D'_{3-i}$.

        As $P_i$ is a path, there are at least two arcs of $\{at,tb,ct,td\}$ not in $P_i$, we may assume these two arcs are $at,tb$. Then, $P_1$ and $P_2$ are two arc-disjoint $(X,Y)$-paths such that $A_{P_1,P_2}(C)\subsetneq A_{Q_1,Q_2}(C) $, which is a contradiction to the fact that $\{Q_1,Q_2\}$ is $(X,Y)_{C}$-critical. 
	\end{proof}
 
     Notice that for all $t\in V_1(C) \cup V_{Q_1,Q_2}(C)$ and $i\in [2]$, we have $d^+_{D'_i}(t)=d^-_{D'_i}(t)\in\{0,1\}$. This implies that $D'_1$ and $D'_2$ form a pending decomposition of $D$. Therefore, $D$ has a strong arc decomposition by Lemma~\ref{lem:pending decom}.
  
\end{proof}

\begin{remark}\label{rmk:critical is pending}
    The above proof provides a slightly stronger statement: if the decomposition $D_1$ and $D_2$ is a strong arc decomposition of $D[V_2]+\bar{A}_{Q_1,Q_2}(C)+\bar{C}$, then the decomposition $D'_1$ and $D'_2$ is a pending decomposition of $D$, where $D'_1$ $(D'_2)$ is obtained by lifting all splitting arcs in $D_1$ $(D_2)$.
\end{remark}

\begin{lemma}\label{lem:counter example}
Let $D=(V_1,V_2;A)$ be a 2-arc-strong split digraph, if $D$ has a copy of at least one of the following structures, then $D$ has no strong arc decomposition.
	\begin{itemize}
		\item There are $x_1,x_2,u\in V(D)$ such that $N_D^+(u)=\{x_1,x_3\},$ $N_D^-(u)=\{x_1,x_2\},$ $N_D^+(x_1)=\{x_2,u\}, N_D^+(x_2)=\{v,u\},$  where $x_3\in V(D)\backslash\{x_1,x_2,u\}, v\in V(D)\backslash\{x_1,x_2,u\}.$

            \item There are $x_1,x_2,u\in V(D)$ such that $N_D^-(u)=\{x_1,x_3\},$ $N_D^+(u)=\{x_1,x_2\},$ $N_D^-(x_1)=\{x_2,u\}, N_D^-(x_2)=\{v,u\},$  where $x_3\in V(D)\backslash\{x_1,x_2,u\}, v\in V(D)\backslash\{x_1,x_2,u\}.$
	\end{itemize}
\end{lemma}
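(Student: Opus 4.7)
The plan is to assume for contradiction that $D$ admits a strong arc decomposition $A(D)=A_1\sqcup A_2$ and to derive a contradiction from the tight degree constraints imposed by the first structure; the second structure will then follow by applying the same argument to the reverse digraph $D^{\mathrm{rev}}$.

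First I would use that each $D[A_i]$ is strong, so every vertex has at least one in-arc and one out-arc in each colour class. Combined with the hypothesis that $u$ has in- and out-degree exactly $2$ and that $x_1,x_2$ each have out-degree exactly $2$, this forces each of the four pairs $\{ux_1,ux_3\}$, $\{x_1u,x_2u\}$, $\{x_1u,x_1x_2\}$, $\{x_2v,x_2u\}$ to split one-one between $A_1$ and $A_2$. After swapping the labels if necessary to put $x_1u\in A_1$, propagation through these constraints forces $x_2u,\,x_1x_2\in A_2$ and $x_2v\in A_1$, leaving only the placement of $ux_1$ free.

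Next I would case-split on that last arc. If $ux_1\in A_1$, then in $A_1$ the only out-arcs of $u$ and of $x_1$ are $ux_1$ and $x_1u$ respectively, so the set $\{u,x_1\}$ is closed under out-neighbours in $D[A_1]$. If instead $ux_1\in A_2$, then in $A_2$ the only out-arcs of $u$, $x_1$, $x_2$ are $ux_1$, $x_1x_2$, $x_2u$ respectively, so $\{u,x_1,x_2\}$ is closed in $D[A_2]$. The neighbourhood descriptions already make $x_1,x_2,u$ pairwise distinct and place $x_3$ and $v$ outside $\{x_1,x_2,u\}$, so $|V(D)|\ge 4$; in either case a proper nonempty subset is closed in one colour class, contradicting strong connectivity.

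Finally, for the second (arc-reversed) bullet I would observe that it is exactly the first structure applied to $D^{\mathrm{rev}}$, and strong arc decompositions are preserved under arc reversal (reverse each colour class), so the same contradiction applies. I foresee no substantive obstacle: the proof is essentially a careful bookkeeping of the one-one splits forced by the three degree-two vertices $u,x_1,x_2$, and the only real choice to be made is the placement of $ux_1$, which both lead to a closed set too small to span $V(D)$.
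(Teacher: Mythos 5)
Your proof is correct and takes essentially the same route as the paper: both determine the forced one–one splits of the arcs at the degree-two vertices $u, x_1, x_2$, then case-split on the placement of $ux_1$ and exhibit a proper nonempty vertex set with no out-arc leaving it in one colour class, contradicting strong connectivity; the second bullet is dispatched by arc reversal in both.
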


\begin{remark}
    As we have characterized all the neighbors of $u$, we have $u\in V_1$ and $x_1,x_2,x_3\in V_2$ when $|V_2|\geq 5$ by the structure of the split digraph. And note that $x_3 = v$ is possible. 
\end{remark}

\begin{proof}[Proof of Lemma~\ref{lem:counter example}]
        If the first case occurs, suppose, for the sake of contradiction, that $D$ has a strong arc decomposition $D_1, D_2$. Without loss of generality, assume $x_1x_2\in D_1$ and $x_1u\in D_2$ since $d_D^+(x_1)=2$. As $d_D^-(u)=2$, we have $x_2u\in D_1$. Since $d_D^+(x_2)=2$, we have $x_2v\in D_2$. As $d_D^+(u)=2$, it follows that only one of $ux_1$ or $ux_3$ can be in $D_1$.

        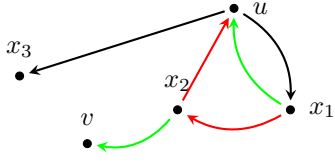
\begin{figure}[H]
    \centering
    \begin{tikzpicture}[scale=0.3]
			
			\filldraw[black](15,0) circle (5pt)node[label=right:$x_1$](x1){};
			\filldraw[black](10,0) circle (5pt)node[label=above:{$x_2$}](x2){};
			\filldraw[black](6,-1.5) circle (5pt)node[label=above:{$v$}](v){};
			\filldraw[black](12.5,4.5) circle (5pt)node[label=right:{$u$}](u){};
			\filldraw[black](3,1.5) circle (5pt)node[label=above:{$x_3$}](x3){};
			
			\path[draw, bend left=30, line width=0.8pt, red] (x1) to (x2);
			\path[draw, bend left=30, line width=0.8pt, green] (x2) to (v);
			\path[draw, bend left=30, line width=0.8pt, green] (x1) to (u);
			\path[draw, line width=0.8pt, red] (x2) to (u);
                \path[draw, bend left=30, line width=0.8pt] (u) to (x1);	
			\path[draw, line width=0.8pt] (u) to (x3);	
                
		\end{tikzpicture}
    \caption{An illustration of Proof of Lemma~\ref{lem:counter example}}
    \label{fig:a8}
\end{figure}
        
        If $ux_1\in D_1$, then $ux_3\in D_2$. There is no arc from $\{x_1,x_2,u\}$ to other vertices in $D_1$, which contradicts the fact that $D_1$ is strong. If $ux_1\in D_2$, then $ux_3\in D_1$. There is no arc from $\{x_1,u\}$ to other vertices in $D_2$, which contradicts the fact that $D_2$ is strong.

        By similar arguments, $D$ also has no strong arc decomposition if the second case occurs.
\end{proof}

\section{Proof of Theorem~\ref{thm:2as} when $|V_2| \geq 5$}
  
Let us consider the induced subdigraph $D[ V_2]$. If $D[ V_2]$ is 2-arc-strong and $|V_2|\geq 5$, then $D[ V_2]$ has a strong arc decomposition by Theorem~\ref{thm:semi SAD}. Consequently, $D$ also has a strong arc decomposition by Lemma~\ref{lem:(D-X)->D}. Therefore, we will focus on the case where $D[V_2]$ is not 2-arc-strong and $|V_2|\geq 5$ in the following.

	
	

\subsection{When $D[V_2]$ is strong but not 2-arc-strong}
Since $D[V_2]$ is strong but not 2-arc-strong and $|V_2|\geq 5$, by Theorem~\ref{thm:nice-decom}, it has a nice decomposition $(U_1,\ldots,U_{\ell})$ and a natural ordering of the backward arcs $(x_1y_1,\ldots,x_ry_r)$ of this decomposition. 

Let $\{Q_1,Q_2\}$ be a $(U_{\ell},U_1)_{B}$-critical path pair in $D$, where $B\subseteq A(V_1,V_2)$ is a feasible arc set. Since we can choose $B=\emptyset$ and $D$ is 2-arc-strong, such $B$ and $\{Q_1,Q_2\}$ exist. If the multi-digraph $D[V_2]+\bar{A}_{Q_1,Q_2}(B)+\bar{B}$ is 2-arc-strong, then it has a strong arc decomposition by Theorem~\ref{thm:multi semi SAD}. Therefore, $D$ has a strong arc decomposition by Lemma~\ref{lem:critical AC to SAD}.

Suppose that $D[V_2]+\bar{A}_{Q_1,Q_2}(B)+\bar{B}$ is not 2-arc-strong, which implies there is at least one cut-arc $e$ in it. Note that $e$ is not a splitting arc because $D[ V_2]$ is strong. Therefore, we have $e\in A(D[V_2])$. Since $D[V_2]$ has a nice decomposition $(U_1,...,U_{\ell})$ and a natural ordering of the backward arcs $(x_1y_1,\ldots,x_ry_r)$, the arc $e$ must be a backward arc. We can assume $e=x_iy_i$ for some $i\in [r]$.

\begin{lemma}\label{lem:cut-arc}
	If $x_iy_i\in D[ V_2]$ is a cut-arc in $\bar{G}:=D[V_2]+\bar{A}_{Q_1,Q_2}(B)+\bar{B}$, then at least one of the following cases holds:
	\begin{enumerate}
		\renewcommand{\theenumi}{($\alpha$\arabic{enumi})}
		\item\label{(alpha1)} $i=2,U_{\ell}=\{x_1\},$ $U_{\ell-1}=\{y_1\}=\{x_2\}$, $d^+_{\bar{G}}(x_2)=1$.
		
		\item\label{(alpha2)} $i=r-1,U_1=\{y_r\},$ $U_{2}=\{x_r\}=\{y_{r-1}\}$, $d^-_{\bar{G}}(y_{r-1})=1$.
	\end{enumerate}
\end{lemma}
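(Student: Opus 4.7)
The plan is to assume $x_iy_i$ is a cut-arc of $\bar G$ and derive the stated structural conclusions by analyzing the cut structure in $\bar G$, the nice decomposition of $D[V_2]$, and the criticality of $\{Q_1,Q_2\}$.

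First, I would establish the cut structure. Since $\bar G$ is a strong semicomplete multi-digraph, the terminal strong component $X$ of $\bar G-x_iy_i$ contains $x_i$ and $x_iy_i$ is the unique arc leaving $X$ in $\bar G$; set $Y=V_2\setminus X$ so that $y_i\in Y$. For $x_iy_i$ to truly be a cut-arc it must occur with multiplicity exactly one, so no splitting arc from $\bar A_{Q_1,Q_2}(B)\cup\bar B$ is parallel to $x_iy_i$, and more generally no splitting arc lies from $X$ to $Y$. Combining this with Proposition~\ref{prop-nicedecom} (in particular the nested indices of the backward arcs), I would deduce that every other backward arc $x_jy_j$ with $j\neq i$ has both endpoints on the same side of the cut $(X,Y)$.

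Second, I would examine the crossings of the splittings of $Q_1$ and $Q_2$ across $(X,Y)$. These splittings produce two arc-disjoint walks in $\bar G$ from $U_\ell$ to $U_1$. Whenever $U_\ell\subseteq X$ and $U_1\subseteq Y$, each walk must cross from $X$ to $Y$ at least once, but the only available $X$-to-$Y$ arc is the single copy of $x_iy_i$, so the two walks cannot both cross the cut, a contradiction. Hence either $U_\ell\not\subseteq X$ or $U_1\not\subseteq Y$, and the two cases are symmetric and correspond to the conclusions $(\alpha_1)$ and $(\alpha_2)$ respectively.

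Third, I would pin down the structure in the top case (the case yielding $(\alpha_1)$; the bottom is symmetric). Using that $U_\ell$ is strong in $\bar G-x_iy_i$ yet does not lie entirely in $X$, together with uniqueness of $x_iy_i$ as the $X$-to-$Y$ arc and the nested structure of backward arcs given by Proposition~\ref{prop-nicedecom}, I would force $U_\ell=\{x_1\}$ and $U_{\ell-1}=\{y_1\}=\{x_2\}$ so that $i=2$. The degree condition $d^+_{\bar G}(x_2)=1$ would then follow from criticality: if $x_2$ had a second out-arc in $\bar G$, one could reroute whichever of $Q_1,Q_2$ uses $x_2y_2$ through that alternative arc (possibly via a previously unused $V_1$ vertex) to obtain a pair $\{Q_1',Q_2'\}$ with $A_{Q_1',Q_2'}(B)\subsetneq A_{Q_1,Q_2}(B)$, contradicting $(U_\ell,U_1)_B$-criticality.

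The main obstacle I anticipate is the crossing analysis in the second step: I must carefully rule out the possibility that $U_\ell$ or $U_1$ is split between $X$ and $Y$, and in the third step I must verify that a reroute witnessing the claimed degree condition can always be found using either an unused $V_1$ vertex or a modification of the feasible set $B$, while preserving arc-disjointness and keeping $V_{Q_1',Q_2'}(B)\cap V_1(B)=\emptyset$. These bookkeeping details, together with the interaction between splittings and multi-arcs at the unique cut-arc, are where the delicate part of the argument lies.
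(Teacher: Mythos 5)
Your overall route---identify the cut $(X,Y)$ in $\bar G$ witnessing that $x_iy_i$ is a cut-arc, count crossings of the two split-off $(U_\ell,U_1)$-walks across $(X,Y)$, then use Proposition~\ref{prop-nicedecom} to pin down $U_\ell$ and $U_{\ell-1}$---is a legitimate alternative to the paper's argument, which instead directly asks whether there is still an $(x_i,U_\ell)$-path and a $(U_1,y_i)$-path after deleting $x_iy_i$. The two dichotomies are essentially dual ways of saying the same thing, and either can be pushed through.

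However, your derivation of the degree condition $d^+_{\bar G}(x_2)=1$ has a genuine gap. You appeal to $(U_\ell,U_1)_B$-criticality and propose to reroute whichever of $Q_1,Q_2$ uses $x_2y_2$ through the hypothetical second out-arc. But criticality is a statement about minimizing the set $A_{Q_1,Q_2}(B)$ of $V_1$--$V_2$ arcs, i.e.\ about the splitting arcs; it says nothing about which arcs of $D[V_2]$ appear in $\bar G$. If the second out-arc of $x_2$ in $\bar G$ is, say, the arc $x_2x_1\in A(D[V_2])$ (which is perfectly possible since $D[V_2]$ is semicomplete and $x_1x_2$ is the backward arc $x_1y_1$), no rerouting of $Q_1$ or $Q_2$ touches it, so criticality gives you nothing. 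Moreover it is not even given that either $Q_j$ contains $x_2y_2$. The degree condition should instead be read off directly from the cut-arc hypothesis: once you know $U_\ell=\{x_1\}$ and $U_{\ell-1}=\{x_2\}$, any out-arc $x_2\to z$ of $x_2$ in $\bar G$ with $z\neq y_2$ forces an alternative $(x_2,x_1)$-path avoiding $x_2y_2$, because either $z=x_1$ or, by Proposition~\ref{prop-nicedecom}, $x_1$'s only backward out-arc is $x_1x_2$, so $x_1\to z$ does not exist and semicompleteness gives $z\to x_1$. This contradicts $x_2y_2$ being a cut-arc, with no invocation of criticality at all. Replace that step and the rest of your outline goes through.
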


\begin{proof}
	Since $Q_1$ and $Q_2$ are arc-disjoint, there is an integer $j\in[2]$ such that $x_iy_i\notin Q_j$. Given that $x_iy_i$ is a cut-arc, there are no other $(x_i,y_i)$-paths other than $x_iy_i$ in $\bar{G}$. Consider the following structure:
	\[x_i\dashrightarrow U_{\ell}\stackrel{Q_j}{\longrightarrow}U_1 \dashrightarrow y_i.\]
Then at least one of the following cases occurs:
	\begin{itemize}
		\item There is no $(x_i,U_{\ell})$-path in $\bar{G}\backslash x_iy_i,$
		
		\item There is no $(U_1,y_i)$-path in $\bar{G}\backslash x_iy_i.$
	\end{itemize}
	
	If $x_iy_i$ is a cut-arc such that there is no $(x_i,U_{\ell})$-path in $\bar{G}\backslash x_iy_i$, then $x_i\notin U_{\ell}$. According to Proposition \ref{prop-nicedecom}, we have $x_1\in U_{\ell}, y_i\notin U_{\ell}$, which means $x_1\neq y_i,x_i$. Additionally, all vertices in $U_\ell$ must dominate $x_i$ as there is no $(x_i,U_{\ell})$-path in $G\backslash x_iy_i$ This implies $U_{\ell}= \{x_1\}$ and $x_i=y_1$ as $x_1y_1$ is the only backward arc from $U_\ell$.
 
    If $\{x_i\}\neq U_{\ell-1}$, then there is a vertex $z\in U_{\ell-1}$ such that $x_i\rightarrow z\rightarrow x_1,$ which contradicts the fact that there is no $(x_i,U_{\ell})$-path in $\bar{G}\backslash x_iy_i$. Hence, $U_{\ell-1}= \{x_i\}$. Furthermore, $x_i=x_2$ and $y_i=y_2$, considering the structure of backward arcs. 
    
    If $d^+_{\bar{G}}(x_i)\geq 2$, then there is a vertex $x_i^+\neq x_i$(possibly, $x_i^+=x_1$) such that  $x_i\rightarrow x_i^+\rightarrow x_1$ in $\bar{G}\backslash x_iy_i,$ which leads to a contradiction. Finally, we have $U_{\ell}=\{x_1\},$ $U_{\ell-1}=\{x_i\}=\{y_1\}=\{x_2\}$ and $d^+_{\bar{G}}(x_i)=1$. A rough construction of $\bar{G}$ can be seen in Figure \ref{fig:cut-arc x_2y_2}.
	
	\begin{figure}[H]
		\centering\begin{tikzpicture}[scale=0.3]
			\foreach \i in {(-15,0),(-10,0),(-5,0),(5,0)}{\draw[ line width=0.8pt] \i ellipse [x radius=50pt, y radius=70pt];}
			\coordinate [label=center:$\ldots$] () at (0,0);
			\coordinate [label=center:$U_1$] () at (-15,4);
			\coordinate [label=center:$U_2$] () at (-10,4);
			\coordinate [label=center:$U_3$] () at (-5,4);
			\coordinate [label=center:$\ldots$] () at (0,4);
			\coordinate [label=center:$U_{\ell-2}$] () at (5,4);
			\coordinate [label=center:$U_{\ell-1}$] () at (10,4);
			\coordinate [label=center:$U_{\ell}$] () at (15,4);
			\draw[line width=1.5pt] (-2,3) -- (2,3); 
			
			\filldraw[black](15,0) circle (5pt)node[label=above:$x_1$](x1){};
			\filldraw[black](10,0) circle (5pt)node[label=above:{$x_2(y_1,x_i)$}](x2){};
			\filldraw[black](6,-1.5) circle (5pt)node[label=below:{~$y_2(y_i)$}](y2){};
			\filldraw[black](4,-1.5) circle (5pt)node[label=above:$x_3$](x3){};
			\filldraw[black](-10,-1.5) circle (5pt)node[label=above:$y_{r-1}$](yr-1){};
			\filldraw[black](-15,-1.5) circle (5pt)node[label=above:$y_r$](yr){};
			\filldraw[black](-5,-1.5) circle (5pt)node[label=above:$x_r$](xr){};
			
			\path[draw, bend left=30, line width=0.8pt] (x1) to (x2);
			\path[draw, bend left=30, line width=0.8pt, red] (x2) to (y2);
			\path[draw, bend left=30, line width=0.8pt] (x3) to (yr-1);
			\path[draw, bend left=30, line width=0.8pt] (xr) to (yr);

			\path[draw, bend right=3, line width=0.8pt, green] (x1) to (-5,1.5);
			\path[draw, bend right=10, line width=0.8pt, green] (-10,1.5) to (-15,1.5);

		\end{tikzpicture}\caption{An illustration of a nice decomposition $(U_1,\ldots, U_{\ell})$ with backward arcs $\{x_1y_1,x_2y_2,\ldots,x_ry_r\}$ where $x_2=y_1$. All arcs between different $U_i$ which are not shown are from left to right. The green arcs are splitting arcs, and $x_2y_2$ is a cut-arc.}
		\label{fig:cut-arc x_2y_2}
	\end{figure}
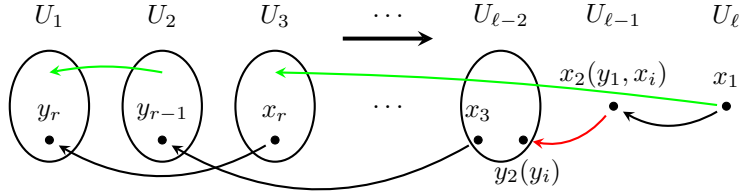
	
	Similarly, if $x_iy_i$ is a cut-arc such that there is no $(U_1,y_i)$-path in $\bar{G}\backslash x_iy_i,$ then we have $U_1=\{y_r\},$ $U_{2}=\{y_{i}\}=\{x_r\}=\{y_{r-1}\}$ and $d^-_{\bar{G}}(y_{r-1})=1$.
\end{proof}

We can see that there can be at most two cut-arcs $x_2y_2,x_{r-1}y_{r-1}$ in $D[V_2]+\bar{A}_{Q_1,Q_2}(B)+\bar{B}$ as established by Lemma~\ref{lem:cut-arc}. The approach for the remaining part of this subsection is as follows: Based on structures shown in $D[V_2]+\bar{A}_{Q_1,Q_2}(B)+\bar{B}$, we first attempt to find another feasible arc set $B_{\text{new}}$ (which maybe empty), and a $(U_{\ell}, U_1)_{B_{\text{new}}}$-critical path pair $\{Q_3, Q_4\}$ such that neither \ref{(alpha1)} nor \ref{(alpha2)} occurs in $D[V_2]+\bar{A}_{Q_3,Q_4}(B_{\text{new}})+\bar{B}_{\text{new}}$, thereby making $D[V_2]+\bar{A}_{Q_3,Q_4}(B_{\text{new}})+\bar{B}_{\text{new}}$ 2-arc-strong. To find such a feasible $B_{\text{new}}$, we will follow an algorithmic procedure. This procedure may involve adding some additional arcs in $D[V_2]$. However, in the final pending decomposition of $D$, these additional arcs will not be used.

\begin{lemma}\label{lem:x1x2 in Q}
	If \ref{(alpha1)} occurs for $D[V_2]+\bar{A}_{Q_1,Q_2}(B)+\bar{B}$, then we can find a $(U_{\ell},U_1)_{B}$-critical path pair $\{Q'_1,Q'_2\}$, such that $A_{Q_1,Q_2}(B)=A_{Q'_1,Q'_2}(B)$ and for some $i\in [2]$ we have $x_1x_2\in A(Q'_i)$.
\end{lemma}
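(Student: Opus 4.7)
The plan is to perform a prefix-swap on one of the two paths and use the criticality of $\{Q_1,Q_2\}$ to force the swap to preserve $A_{Q_1,Q_2}(B)$. If $x_1x_2\in A(Q_1)\cup A(Q_2)$ already, take $Q'_j=Q_j$ and there is nothing to prove. Otherwise, since $U_\ell=\{x_1\}$ both paths start at $x_1$, and $x_1x_2$ is the unique arc leaving $x_1$ inside $D[V_2]$ (it is the only backward arc from the terminal component, and no forward arc leaves $U_\ell$). Hence each $Q_i$ leaves $x_1$ via a vertex of $V_1$, say $Q_i=x_1\to v'_i\to w_i\to\cdots\to u_i$ for $i=1,2$.

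Next I would extract from \ref{(alpha1)} two structural constraints on $x_2$: first, $x_2x_1\notin A(D[V_2])$, and second, no $Q_i$ leaves $x_2$ via $V_1$, because either would create a second out-arc of $x_2$ in $\bar{G}$ and violate $d^+_{\bar{G}}(x_2)=1$. Consequently, whenever a path visits $x_2$ as an internal vertex, it must continue with the arc $x_2y_2$, so at most one of $Q_1,Q_2$ uses $x_2y_2$ and at most one visits $x_2$.

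The core of the proof is the swap itself. After possibly re-indexing, assume $Q_1$ visits either $x_2$ or $y_2$, and write $Q_1=P_1\cdot R_1$, where $P_1$ is the prefix of $Q_1$ from $x_1$ to the first occurrence of $y_2$ along $Q_1$ (if $Q_1$ traverses $x_2$, it reaches $y_2$ immediately after) and $R_1$ is the suffix from $y_2$ to $u_1$. Define $Q_1^{\star}=(x_1\to x_2\to y_2)\cdot R_1$, i.e.\ replace $P_1$ by the two $D[V_2]$-arcs $x_1x_2$ and $x_2y_2$. Then $Q_1^{\star}$ is an $(x_1,U_1)$-path in $D$, arc-disjoint from $Q_2$: $x_1x_2\notin A(Q_2)$ by assumption, $x_2y_2\notin A(Q_2)$ since $Q_2$ does not visit $x_2$ (otherwise $x_2y_2$ would be used by both paths), and $R_1$ is arc-disjoint from $Q_2$ as a sub-path of $Q_1$. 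The $V_1$-$V_2$ arcs of $Q_1^{\star}$ are exactly those of $R_1$, so they are those of $Q_1$ minus those of $P_1$. If any $V_1$-$V_2$ arc inside $P_1$ is outside $B$, then $A_{Q_1^{\star},Q_2}(B)\subsetneq A_{Q_1,Q_2}(B)$, contradicting the criticality of $\{Q_1,Q_2\}$. Hence every $V_1$-$V_2$ arc of $P_1$ lies in $B$, equality $A_{Q_1^{\star},Q_2}(B)=A_{Q_1,Q_2}(B)$ holds, and we may set $Q'_1=Q_1^{\star}$ and $Q'_2=Q_2$.

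The main obstacle I anticipate is the case where neither $Q_1$ nor $Q_2$ visits $x_2$ or $y_2$. There, the swap cannot be read directly off a visited vertex, and I plan to handle it by constructing a $(y_2,U_1)$-path $P$ inside $D[V_2]$ (which exists since $D[V_2]$ is strong), chosen arc-disjoint from $Q_2$ by exploiting the 2-arc-strong connectivity of $D$ and a careful re-routing argument on $Q_2$. Replacing $Q_1$ by $x_1\to x_2\to y_2\cdot P$ then triggers the same dichotomy as above: either the substitute has strictly fewer non-$B$ $V_1$-$V_2$ arcs, contradicting criticality, or equality is forced, yielding the required pair with $x_1x_2\in A(Q'_1)$.
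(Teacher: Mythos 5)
Your core idea — use criticality to force a prefix replacement that begins with $x_1x_2$ while leaving $A_{Q_1,Q_2}(B)$ unchanged — is exactly the right one, and your Cases 1 and 2 (after fixing the re-indexing, see below) are correct and essentially match the paper's reasoning. The problem is that your Case 3 (neither $Q_1$ nor $Q_2$ visits $x_2$ or $y_2$) is not resolved: you propose to find a $(y_2,U_1)$-path $P$ in $D[V_2]$ \emph{arc-disjoint from $Q_2$}, but such a $P$ need not exist — the $D[V_2]$-arcs used by $Q_2$ can cut every $(y_2,U_1)$-path inside $D[V_2]$ — and "a careful re-routing argument on $Q_2$" is only a placeholder, not a proof. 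Moreover, re-routing $Q_2$ would change the pair and would itself require a fresh criticality check.

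The paper sidesteps the entire case split by not asking $Q_3$ to be arc-disjoint from anything a priori. It takes a $(U_\ell,U_1)$-path $Q_3$ lying wholly inside $D[V_2]$ and starting with $x_1x_2$ (it exists because $D[V_2]$ is strong; the out-neighborhood of $x_1$ in $D[V_2]$ is $\{x_2\}$). If $Q_3$ is arc-disjoint from both $Q_1$ and $Q_2$, criticality forces $A_{Q_1,Q_3}(B)=A_{Q_1,Q_2}(B)$ and $\{Q_1,Q_3\}$ works. Otherwise take the \emph{first} common arc $ab$ along $Q_3$, say $ab\in A(Q_1)$; then $Q_1'=Q_3[x_1,b]\,Q_1[b,U_1]$ is an $(U_\ell,U_1)$-path arc-disjoint from $Q_2$ with $A'(Q_1')\subseteq A'(Q_1)$, and the same criticality argument gives equality. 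This "first common arc" switch subsumes both your Case 2 and your Case 3 at once, because the prefix $Q_3[x_1,a]$ is automatically arc-disjoint from $Q_1$ and $Q_2$ by the choice of $ab$; there is nothing to construct carefully.

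One more local issue in your Case 2: you justify $x_2y_2\notin A(Q_2)$ by "otherwise $x_2y_2$ would be used by both paths," but that reasoning only applies when $Q_1$ itself traverses $x_2$. If $Q_1$ visits $y_2$ without passing through $x_2$, the argument says nothing; you must explicitly re-index so that, whenever one of the two paths visits $x_2$, it is $Q_1$. With that choice $Q_2$ misses $x_2$ and hence does not use $x_2y_2$, and your swap is legitimate. As written, the re-indexing condition ("assume $Q_1$ visits either $x_2$ or $y_2$") does not guarantee this.
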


\begin{proof}
	If $x_1x_2\in Q_i$ for some $i\in [2]$, then we are done. Suppose $x_1x_2\notin Q_i$ for $i\in [2]$, we may assume $Q_1=x_1t_1t_1^+Q_1[t_1^+,U_1],Q_2=x_1t_2t_2^+Q_2[t_2^+,U_1]$ where $t_1,t_2\in V_1$. Consider the $(U_{\ell},U_1)$-path $Q_3=x_1x_2Q_3[x_2,U_1]$ in $D[ V_2]$, the existence of such a path is from that $D[V_2]$ is strong. 
 
    If $Q_3$ is arc-disjoint with $Q_1,Q_2$, then $\{Q_1,Q_3\}$ is an arc-disjoint $(U_{\ell},U_1)$-path pair. As $\{Q_1,Q_2\}$ is a $(U_{\ell},U_1)_{B}$-critical path pair, we have $A_{Q_1,Q_3}(B)=A_{Q_1,Q_2}(B)$, and $\{Q_1,Q_3\}$ is $(U_{\ell},U_1)_{B}$-critical path pair as desired.
    
    If $Q_3$ is not arc-disjoint with $Q_1,Q_2$, consider the first common arc $ab$ in $Q_3$, we may assume $ab\in Q_1$. As $x_1x_2\notin Q_1$, we have $ab\neq x_1x_2$, then replace $Q_1$ with $Q_1'=Q_3[x_1,b]Q_1[b,U_1]$, by a similar argument, we have $\{Q_1',Q_2\}$ is the path pair as desired.
\end{proof}

First, we set $B=\emptyset$ and let $\{Q_1,Q_2\}$ be a $(U_{\ell},U_1)_{B}$-critical path pair. By symmetry, we may assume that \ref{(alpha1)} occurs (of course, may both cases occur). By Lemma~\ref{lem:x1x2 in Q}, we may assume $Q_1=x_1tt^+Q_1[t^+,U_1]$ and $Q_2=x_1x_2y_2Q_2[y_2,U_1]$, where $t\in V_1, x_2\notin V(Q_1)$. 



For convenience, we denote by $G$ the digraph $D[V_2]+B+A_{Q_1,Q_2}(B)$ and $\bar{G}$, the digraph $D[V_2]+\bar{B}+\bar{A}_{Q_1,Q_2}(B)$. Similarly, we can define $\bar{G}_{\text{new}}$ after we define $G_\text{new}$, which we will determine later in different cases. Since $D$ is 2-arc-strong, there is a vertex $u \in V_1 \cap N^{+}_D(x_2)$ as $d^{+}_{\bar{G}} (x_2) = 1$ ($y_2$ contributes in $D[V_2]$). Depending on the situations of $u, t$, and other related vertices, we distinguish among eight cases (which we will see later), from \ref{ope:a1} to \ref{ope:a8}. We give the corresponding operations on $D$, $B$ and $\{Q_1,Q_2\}$ to avoid \ref{(alpha1)} among the first seven cases and the eighth case will result in some counterexamples. Regarding the first seven cases, sometimes we call them \emph{procedures} for simplicity and convenience.

Here, we will introduce an operation for rebuilding $B$.
For a vertex $f\in V_1(D)$, let $e$ be an in-neighbor of $f$. If $f\notin V_1(B)$, there is an out-neighbor $f^+$ of $f$ which is not $e$, since $D$ is 2-arc-strong. Add arcs $ef$ and $ff^+$ to $B$. If $f\in V_1(B)$, which means there are arcs $f^-f,ff^+ \in B$, then replace $f^-f$ with $ef$ and replace $ff^+$ with $ff'$, where $f'$ is another out-neighbor of $f$, if $f^+=e$. We use $B\leftarrow\{ef,ff^+\}$ to denote the above operation.


\begin{enumerate}
	\renewcommand{\theenumi}{\textbf{(A\arabic{enumi})}}
	\item\label{ope:a1} When $u \notin V_{Q_1, Q_2}$:



    Let $B_\text{new}:=B\leftarrow\{x_2u,uu^+\}$. Since $V_{Q_1,Q_2}(B_\text{new})\cap V_1(B_\text{new})=\emptyset$, there is a $(U_\ell,U_1)_{B_\text{new}}$-critical path pair $\{Q_3,Q_4\}$ such that $A_{Q_3,Q_4}(B_\text{new})\subseteq A_{Q_1,Q_2}(B_\text{new})$. Let $G_\text{new}$ be $D[V_2]+B_\text{new}+A_{Q_3,Q_4}(B_\text{new})$. In this case, $d^{+}_{\bar{G}_\text{new}}(x_2) \geq 2$ which avoids \ref{(alpha1)}.

    \begin{remark}\label{rmk:end procedure}
         We consider the following two cases:
	
	\begin{enumerate}
		\item If $|A_{Q_3,Q_4}(B_\text{new})|= |A_{Q_1,Q_2}(B)|$, then for each vertex $v \in V_2\setminus\{x_2\}$, we have $d^-_{\Bar{G}}(v) \leq d^-_{\Bar{G}_{\text{new}}}(v)$. This yields that if \ref{(alpha2)} does not occur in $\bar{G}$, then it will not occur in $\bar{G}_\text{new}$ as well since the in-degree of $y_{r-1}$ in $\bar{G}_\text{new}$ will not decrease to 1 if $y_{r-1}\neq x_2$. If $y_{r-1}= x_2$ and $d^-_{\Bar{G}_{\text{new}}}(v)=1$, it is a contradiction to that $|V_2|\geq 4$.
  
		\item If $A_{Q_3,Q_4}(B_\text{new})\subsetneq A_{Q_1,Q_2}(B)$, then $|A_{Q_3,Q_4}(B_\text{new})|<|A_{Q_1,Q_2}(B)|$.
	\end{enumerate}
	
    Thus, we have either $|A_{Q_3,Q_4}(B_\text{new})|<|A_{Q_1,Q_2}(B)|$ or \ref{(alpha2)} does not occur after this procedure if it did not occur before. And $V_1(B)\subseteq V_1(B_\text{new})$. We can verify that these properties hold in the following procedures as well.
    \end{remark}

        \item\label{ope:a2} When $u\in Q_2$:

    There exists $u^-\in V_2 \setminus \{x_2\}$ such that $u^-u\in A(Q_2)$. If $u^-u\in B$, let $B_\text{new}:=B\leftarrow\{x_2u,uu^+\}$. And if $u^-u\notin B$, we define $B_{\text{new}}:=B$. Let $Q_2'= x_1x_2uQ_2[u,U_1]$. Observe that $Q_2'$ is arc-disjoint with $Q_1$, and $V_{Q_1,Q_2'}(B_\text{new})\cap V_1(B_\text{new})=\emptyset$, we can find a $(U_\ell,U_1)_{B_{\text{new}}}$-critical path pair $\{Q_3,Q_4\}$ such that $A_{Q_3,Q_4}(B_\text{new})\subseteq A_{Q_1,Q_2'}(B_\text{new})$. 
      
    Let $G_\text{new} := D[V_2]+B_\text{new}+A_{Q_3,Q_4}(B_\text{new})$. If $x_2u\notin G_\text{new}$, then $B_\text{new}=B$, $A_{Q_3,Q_4}(B)\subsetneq A_{Q_1,Q_2}(B)$, which contradicts to the fact that $\{Q_1,Q_2\}$ is $(U_{\ell}, U_1)_{B}$-critical. So, we have $x_2u\in G_\text{new}$, then we have $d^{+}_{\bar{G}_\text{new}}(x_2) \geq 2$, which avoids \ref{(alpha1)}.
	
	\item\label{ope:a3}When $u\notin Q_2, u\in Q_1$ and $u\neq t$: 
 	
	There exists $u^-\in V_2 \setminus \{x_2\}$ such that $u^-u\in A(Q_1)$. If $u^-u\in B$, let $B_\text{new}:=B\leftarrow\{x_2u,uu^+\}$. and if $u^-u\notin B$, we define $B_{\text{new}}:=B$.
 
    
    Let $Q_2'=x_1 x_2 uQ_1[u,U_1]$, and let
	\begin{equation*}
		Q_1'=\begin{cases}
			x_1tt^+Q_2[t^+,U_1], & \text{if } t^+\in V(Q_2),\\
			x_1tt^+x_2Q_2[x_2,U_1], &\text{else}.\\
		\end{cases}
	\end{equation*}
    Observe that $Q_1'$ is arc-disjoint with $Q_2'$, and $V_{Q_1',Q_2'}(B_\text{new})\cap V_1(B_\text{new})=\emptyset$, so we can find a $(U_\ell,U_1)_{B_{\text{new}}}$-critical path pair $\{Q_3,Q_4\}$ such that $A_{Q_3,Q_4}(B_\text{new})\subseteq A_{Q_1',Q_2'}(B_\text{new})$. Let $G_\text{new} := D[V_2]+B_\text{new}+A_{Q_3,Q_4}(B_\text{new})$, and the same arguments follow as \ref{ope:a2}.

	\item\label{ope:a4}When $u\notin Q_2,u \in Q_1, u=t$ and there exists $s\in (N^+_{D}(x_1) \cap V_1),s\neq t$, and $s\notin V_{Q_1,Q_2}(B)$:

    Firstly, we get $B_\text{new}$ from $B$ by doing the following two operations.
    \begin{itemize}
        \item If $x_1t\in B$, replace it with $x_2t$. And if $x_1t \not\in B$, add $x_2t$ and $tt^+$ to $B$.

        \item $B\leftarrow\{x_1s,ss^+\}$.
        
    \end{itemize}
    
    
    Let $Q_1'=x_1x_2tt^+Q_1[t^+,U_1]$ and let 
    \begin{equation*}
	Q_2'=\begin{cases}
		x_1ss^+Q_2[s^+,U_1], & \text{if } s^+\in V(Q_2),\\
            x_1ss^+, & \text{if } s^+\notin V(Q_2),s\in U_1,\\
		x_1ss^+x_2Q_2[x_2,U_1], &\text{else}.\\
		\end{cases}
    \end{equation*}
    
    Observe that $Q_1'$ is arc-disjoint with $Q_2'$, and $V_{Q_1',Q_2'}(B_\text{new})\cap V_1(B_\text{new})=\emptyset$, so we can find a $(U_\ell,U_1)_{B_{\text{new}}}$-critical path pair $\{Q_3,Q_4\}$ such that $A_{Q_3,Q_4}(B_\text{new})\subseteq A_{Q_1',Q_2'}(B_\text{new})$. Let $G_\text{new} := D[V_2]+B_\text{new}+A_{Q_3,Q_4}(B_\text{new})$. We have $d^{+}_{\bar{G}_\text{new}}(x_2) \geq 2$, which avoids \ref{(alpha1)}, as $x_2t\in B_\text{new}$.


	\item\label{ope:a5}When $u\notin Q_2, u \in Q_1, u=t$  and there exists $s\in (N^+_{D}(x_1) \cap V_1),s\neq t$, and $s\in V_{Q_1,Q_2}(B)$:

    Firstly, we get $B_\text{new}$ from $B$ by doing the following two operations.
    \begin{itemize}
        \item If $x_1t\in B$, replace it with $x_2t$. And if $x_1t \not\in B$, add $x_2t$ and $tt^+$ to $B$.

        \item If $s\in V_1(B)$, which means there exists an arc $s^-s\in B$, then replace it with $x_1s$. And if $s\notin V_1(B)$, then we do nothing.
    \end{itemize}
    
    If $s\in Q_1$, then $Q_1'=x_1sQ_1[s,U_1]$ is arc-disjoint with $Q_2'=Q_2$.
    
    If $s\notin Q_1$, then $s\in Q_2$, $Q_1'=x_1x_2tQ_1[t,U_1]$ is arc-disjoint with $Q_2'=x_1sQ_2[s,U_1]$.
    
    Observe that $V_{Q_1',Q_2'}(B_\text{new})\cap V_1(B_\text{new})=\emptyset$, so we can find a $(U_\ell,U_1)_{B_{\text{new}}}$-critical path pair $\{Q_3,Q_4\}$ such that $A_{Q_3,Q_4}(B_\text{new})\subseteq A_{Q_1',Q_2'}(B_\text{new})$. Let $G_\text{new} := D[V_2]+B_\text{new}+A_{Q_3,Q_4}(B_\text{new})$. We have $d^{+}_{\bar{G}_\text{new}}(x_2) \geq 2$, which avoids \ref{(alpha1)}, as $x_2t\in B_\text{new}$.\\
    
    \textbf{If \ref{ope:a1}-\ref{ope:a5} do not occur for all $u\in V_1 \cap N^{+}_D(x_2)$, then we have $N^+_D(x_1)\cap V_1=N^+_D(x_2)\cap V_1=\{t\}$ with $t\notin V(Q_2)$. Then we do the following operations.}\\
    
	\item\label{ope:a6}When there exists $w\in N^+_D(t)\backslash\{x_1,t^+\}$:

    We additionally add two parallel arcs $x_2x_1$ and another arc $x_1x_2$ to $D[V_2]$. Let $B_\text{new}=B\leftarrow\{x_2t,tt^+\}$, and we can check $\{Q_3=x_1x_2tQ_1[t,U_1], Q_4=Q_2\}$ is a $(U_\ell,U_1)_{B_{\text{new}}}$-critical path pair, and let $G_\text{new} := D[V_2]+B_\text{new}+A_{Q_3,Q_4}(B_\text{new})$. We have $d^{+}_{\bar{G}_\text{new}}(x_2) \geq 2$, which avoids \ref{(alpha1)}.

	\item\label{ope:a7}When $N^+_D(t)=\{x_1,t^+\}$ and there exists $w\in N^-_D(t)\backslash\{x_1,x_2\}$:

    We do the same thing as that of \ref{ope:a6}.


    \item\label{ope:a8} When \ref{ope:a1}-\ref{ope:a7} do not occur for all $u\in V_1 \cap N^{+}_D(x_2)$:
    
    We have $N^+_D(x_1)\cap V_1=N^+_D(x_2)\cap V_1=\{t\}$ with $t\notin Q_2$ and $N^+_D(t)=\{x_1,t^+\}$, $N^-_D(t)=\{x_1,x_2\}$, note that $D$ satisfies the structure in Lemma \ref{lem:counter example}. Therefore, $D$ has no strong arc decomposition.
\end{enumerate}

We may use \ref{ope:a1}*-\ref{ope:a8}* 
to denote the symmetric procedures of \ref{ope:a1}-\ref{ope:a8} when \ref{(alpha2)} occurs, and denote by $t^*, u^*, s^*,w^*$ the symmetric vertices of $t,u,s,w$, respectively if \ref{(alpha2)} occurs. If we enter either \ref{ope:a8} or \ref{ope:a8}*, then $D$ has no strong arc decomposition by Lemma~\ref{lem:counter example}. And if neither \ref{ope:a8} nor \ref{ope:a8}* is entered, we proceed as follows.

If \ref{(alpha1)} occurs in $\bar{G}$, then we enter one of the procedures from \ref{ope:a1} to \ref{ope:a7} to obtain $G_\text{new}$, which avoids \ref{(alpha1)}. If \ref{(alpha2)} then occurs in $\bar{G}_\text{new}$, then we can find new $\{Q_1,Q_2\}$ such that $y_{r-1}y_r\in A(Q_2)$ and $A_{Q_1,Q_2}(B_\text{new})= A_{Q_3,Q_4}(B_\text{new})$, similar to the process described in Lemma~\ref{lem:x1x2 in Q} for its symmetric case. We then update $G_\text{new}$ to $G$, $B_\text{new}$ to $B$. Next, enter one of the procedures from \ref{ope:a1}*-\ref{ope:a7}* to obtain a new $G_\text{new}$. This process is repeated if \ref{(alpha1)} occurs again for this new $G_\text{new}$.

We remark that at the end of each procedure, the critical path pair is called $\{Q_3, Q_4\}$. If we are not done after one procedure, it indicates that we have to enter another one, and in this case, $\{Q_3, Q_4\}$ will be renamed as $\{Q_1, Q_2\}$ to reflect the default settings for the new procedure. The feasible set $B_\text{new}$ will also be subject to similar adjustments.

This process usually takes more steps than it looks like, as a sequence of operations aimed at avoiding \ref{(alpha1)} may inadvertently lead to \ref{(alpha2)}, and vice versa. Although this process will eventually come to an end, it can be quite complex.

\begin{claim}\label{clm:end}
    The process will eventually terminate. And if we enter procedure \ref{ope:a6} or \ref{ope:a7} (\ref{ope:a6}* or \ref{ope:a7}*), then \ref{(alpha1)} (\ref{(alpha2)}) will not occur, regardless of the steps taken in \ref{ope:a1}*-\ref{ope:a7}* (\ref{ope:a1}-\ref{ope:a7}).
\end{claim}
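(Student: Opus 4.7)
The plan is to prove the second assertion (permanent avoidance of \ref{(alpha1)} after entering \ref{ope:a6} or \ref{ope:a7}) first, then leverage it to control the termination argument.

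For the second assertion, observe that \ref{ope:a6} and \ref{ope:a7} are the only steps in the whole process that actually modify the semicomplete digraph $D[V_2]$ itself, by inserting two parallel copies of the arc $x_2x_1$ and one additional copy of $x_1x_2$. After this augmentation, $d^+_{D[V_2]}(x_2)\geq 2$. Every subsequent auxiliary multidigraph built by the algorithm has the form $\bar G_{\text{new}}=D[V_2]+\bar B_{\text{new}}+\bar A_{Q_3,Q_4}(B_{\text{new}})$, and the subsequent procedures in \ref{ope:a1}*--\ref{ope:a7}* only modify the feasible set $B$ and the critical path pair $\{Q_1,Q_2\}$; they never delete arcs from $D[V_2]$. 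Therefore the lower bound $d^+_{\bar G_{\text{new}}}(x_2)\geq 2$ is preserved throughout. But \ref{(alpha1)} explicitly requires $d^+_{\bar G}(x_2)=1$, so it cannot reappear. An identical argument, replacing the pair $(x_1,x_2)$ by $(y_r,y_{r-1})$, handles the symmetric statement for \ref{ope:a6}* and \ref{ope:a7}*.

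For termination, use the potential $\mu:=|A_{Q_1,Q_2}(B)|$ and organize the process into \emph{rounds}: each round consists of at most one application from \ref{ope:a1}--\ref{ope:a7} to fix \ref{(alpha1)}, immediately followed by at most one application from \ref{ope:a1}*--\ref{ope:a7}* to fix \ref{(alpha2)}, after which we re-examine $\bar G$. By the second assertion, each of \ref{ope:a6}, \ref{ope:a7}, \ref{ope:a6}*, \ref{ope:a7}* is entered at most once in total, so it suffices to bound the number of rounds whose two steps both come from \ref{ope:a1}--\ref{ope:a5} or their starred counterparts. Consider such a non-terminating round. If \ref{(alpha2)} was absent before the round, Remark~\ref{rmk:end procedure} applied to the first step gives either a strict drop in $\mu$ or preservation of the absence of \ref{(alpha2)}; the latter would make the round terminate, so non-termination forces a strict drop. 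If instead \ref{(alpha2)} was present before the round, we must proceed to the second step, and the symmetric version of Remark~\ref{rmk:end procedure} yields either a strict drop of $\mu$ or preservation of the absence of \ref{(alpha1)} (which was established in the first step); once again the latter terminates the round, so non-termination forces a strict drop. Consequently every non-terminating round strictly decreases $\mu$; since $\mu$ is a non-negative integer bounded above by $|A(D)|$, the process halts after at most $O(|A(D)|)$ rounds.

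The main obstacle is a careful interpretation of Remark~\ref{rmk:end procedure} and a case analysis over the four possible presence patterns of \ref{(alpha1)} and \ref{(alpha2)} across the two steps of a round. The remark provides a disjunction of the form ``$\mu$ strictly decreases, OR the absence of the other indicator is preserved'', and one has to check in every sub-case that the alternative which would \emph{not} force a strict drop of $\mu$ already produces immediate termination, so that genuine continuation always costs at least one unit of $\mu$.
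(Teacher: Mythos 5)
Your proof is correct and follows essentially the same route as the paper: the second assertion is handled by noting that the additional parallel arcs inserted in \ref{ope:a6}/\ref{ope:a7} persist (since later procedures never delete arcs from $D[V_2]$), and termination is driven by the same potential $|A_{Q_1,Q_2}(B)|$ via the dichotomy of Remark~\ref{rmk:end procedure}. Your explicit ``rounds'' bookkeeping and the four-way case analysis make the boundedness argument more transparent than the paper's one-sentence treatment, but the underlying ideas coincide.
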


\begin{proof}
    Without loss of generality, we may assume that \ref{(alpha1)} occurs initially and that $\{Q_1,Q_2\}$ is a $(U_{\ell},U_1)_{B}$-critical path pair, with $B=\emptyset$. According to Remark \ref{rmk:end procedure}, either we are done if \ref{(alpha2)} does not occur or $|A_{Q_3,Q_4}(B_\text{new})|<|A_{Q_1,Q_2}(B)|$. Thus, we can  obtain a 2-arc-strong $\bar{G}_\text{new}$ in finite steps since the original $|A_{Q_1,Q_2}|$ is bounded. And if we enter \ref{ope:a6} or \ref{ope:a7} (\ref{ope:a6}* or \ref{ope:a7}*), then \ref{(alpha1)} (\ref{(alpha2)}) will not occur, regardless of the steps taken in \ref{ope:a1}*-\ref{ope:a7}* (\ref{ope:a1}-\ref{ope:a7}) as these procedure do not remove the additional arcs. This implies that $d^{+}_{\bar{G}_\text{new}}(x_2) \geq 2$.
\end{proof}

If we have never entered \ref{ope:a6} , \ref{ope:a7}, \ref{ope:a6}* or \ref{ope:a7}*, then $G_\text{new}$ is a subdigraph of $D$. For $|V_2|\geq 5$, $\bar{G}_\text{new}$ has a strong arc decomposition by Theorem~\ref{thm:multi semi SAD}. Consequently, $D$ has a strong arc decomposition by Lemma~\ref{lem:critical AC to SAD}.

If we have entered \ref{ope:a6} , \ref{ope:a7}, \ref{ope:a6}* or \ref{ope:a7}*, then by the following lemma and Lemma~\ref{lem:pending decom}, $D$ has a strong arc decomposition.

\begin{lemma}\label{lem: gnew has pending}
   If we have entered \ref{ope:a6} , \ref{ope:a7}, \ref{ope:a6}* or \ref{ope:a7}*, then we can find a pending decomposition of $D$ for $|V_2|\geq 4$.
\end{lemma}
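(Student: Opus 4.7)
The plan is to combine the strong arc decomposition of the (2-arc-strong) semicomplete multi-digraph $\bar{G}_\text{new}$ supplied by Theorem~\ref{thm:multi semi SAD} with a local surgery that removes the auxiliary parallel arcs added to $D[V_2]$ inside the procedures. By arc-reversal symmetry it suffices to treat \ref{ope:a6} and \ref{ope:a7}; in both, $\bar{G}_\text{new}$ differs from the genuine data only by two parallel arcs $x_2x_1$ together with one extra copy of $x_1x_2$. For $|V_2|=4$ I would first rule out the finitely many exceptional semicomplete multi-digraphs of Figure~\ref{fig-MD-exceptional} by exhibiting an explicit pending decomposition of $D$ in each case by hand.

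Away from the exceptions, Theorem~\ref{thm:multi semi SAD} supplies a strong arc decomposition $\{D_1,D_2\}$ of $\bar{G}_\text{new}$. Lifting all splitting arcs back through $V_1$ produces arc-disjoint strong spanning subdigraphs $\widetilde{D}_1,\widetilde{D}_2$ of $D\cup A^{+}$, where $A^{+}$ denotes the set of fake auxiliary arcs. By Remark~\ref{rmk:critical is pending}, $\{\widetilde{D}_1,\widetilde{D}_2\}$ is already a pending decomposition of $D\cup A^{+}$. So the remaining task is to substitute every fake arc occurring in $\widetilde{D}_1\cup\widetilde{D}_2$ by a short path using only genuine arcs of $D$, while preserving arc-disjointness, the strong connectivity of each $\widetilde{D}_i$, and the in-arc/out-arc requirement at every vertex of $V_1$ that lies in exactly one $\widetilde{D}_i$.

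The substitution is driven by the extra neighbor $w$ of $t$ that the hypotheses of \ref{ope:a6} and \ref{ope:a7} guarantee. In \ref{ope:a6}, $t\to w$ is a genuine arc of $D$, so a fake $x_2x_1$ appearing in a $\widetilde{D}_i$ whose lifting does not already commit $t$ to the arc $tx_1$ can be replaced by the real path $x_2\to t\to x_1$; a second copy of the fake $x_2x_1$, or the fake $x_1x_2$, can be re-routed through $w$ and the internal $V_2$-arcs supplied by the semicomplete structure of $D[V_2]$ and by the already-present backward arc $x_1x_2$. In \ref{ope:a7} the symmetric construction uses the extra in-neighbor $w$ of $t$ together with the arcs $x_1\to t$ and $x_2\to t$.

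The main obstacle is bookkeeping. The vertex $t$ is introduced into exactly one $\widetilde{D}_i$ by the lifting of the splitting pair $(x_2t,tt^{+})$, and its neighborhood in $D$ is very tightly constrained by the hypotheses of \ref{ope:a6}--\ref{ope:a7}, so the substitute paths must be routed through $t$ (and $w$) with care. A careful case split on the distribution of the fake arcs between $\widetilde{D}_1$ and $\widetilde{D}_2$, together with the existence of $w$ (which is precisely what distinguishes \ref{ope:a6} and \ref{ope:a7} from \ref{ope:a8}), is what allows the replacement to be carried out without collisions. Verifying that (i) no arc of $D$ is used twice across $\widetilde{D}_1\cup\widetilde{D}_2$ after substitution, (ii) each $\widetilde{D}_i$ remains strong, and (iii) $t$ still has an incident in-arc and an incident out-arc outside its own $\widetilde{D}_i$, is the delicate point of the proof.
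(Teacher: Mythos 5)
Your high-level plan — decompose $\bar{G}_\text{new}$ via Theorem~\ref{thm:multi semi SAD}, lift to a pending decomposition of $D$ plus the fake arcs, then locally reroute the fake arcs through $t$ and the extra neighbor $w$ — matches the paper's strategy. But there are two genuine gaps.

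First, your treatment of $|V_2|=4$ is misguided. You propose to "rule out the finitely many exceptional semicomplete multi-digraphs \ldots by exhibiting an explicit pending decomposition of $D$ in each case by hand," and only then apply Theorem~\ref{thm:multi semi SAD} "away from the exceptions." But if $\bar{G}_\text{new}$ \emph{were} one of the seven exceptions, there would be no strong arc decomposition of $\bar{G}_\text{new}$ to lift in the first place, so your whole machinery collapses and it is not clear how you would produce the by-hand pending decomposition of $D$ you promise. The point you missed is that \ref{ope:a6} and \ref{ope:a7} add enough parallel arcs that $\bar{G}_\text{new}$ contains two copies each of $x_1x_2$ and of $x_2x_1$; no digraph in Figure~\ref{fig-MD-exceptional} has a digon with both arcs doubled, so $\bar{G}_\text{new}$ is automatically not an exception and Theorem~\ref{thm:multi semi SAD} applies outright for all $|V_2|\geq 4$. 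No separate $|V_2|=4$ check is needed or possible in the form you describe.

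Second, the part you label "the main obstacle is bookkeeping" and "the delicate point of the proof" is in fact the bulk of the lemma, and you do not carry it out. In particular you never address the case where \emph{both} \ref{ope:a6}/\ref{ope:a7} and \ref{ope:a6}*/\ref{ope:a7}* were entered, i.e.\ when additional arcs have been added near both $\{x_1,x_2\}$ and $\{y_{r-1},y_r\}$. The paper must split this into $t=t^*$ (where \ref{ope:a6} and \ref{ope:a6}* are forced and the reallocations interact, using the coincidence $x_2t^+=t^{*-}y_{r-1}$) and $t\neq t^*$ (where a compatibility condition, Remark~\ref{rmk:legal}, guarantees the two local surgeries do not collide). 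Your sketch has no analogue of this interaction, and without it, even granting the single-surgery reroutes through $w$, you cannot conclude that arc-disjointness and strong connectivity survive when both ends of the nice decomposition have been patched. Naming the bookkeeping as delicate is not a substitute for the case analysis that makes the argument go through.
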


\begin{proof}
    We can first assume we have entered \ref{ope:a6} or \ref{ope:a7} by symmetry(of course we can also have entered \ref{ope:a6}* or \ref{ope:a7}*). Since we have added additional arcs in $D[V_2]$, resulting in two parallels arcs $x_1x_2$ and two parallels arcs $x_2x_1$ in $\bar{G}_\text{new}$, so $\bar{G}_\text{new}$ is not isomorphic to the counterexamples in Theorem~\ref{thm:multi semi SAD}. So, we can find a strong arc decomposition $\bar{G}_1$ and $\bar{G}_2$ of $\bar{G}_\text{new}$. By lifting all splitting arcs in $\bar{G}_1$ and $\bar{G}_2$, we obtain $G_1$ and $G_2$, which form a pending decomposition of $D$ with the additional arcs by Remark~\ref{rmk:critical is pending}. The next step is to reallocate the arcs in $G_1$ and $G_2$ to ensure that they remain strong. And we have the following: 

    \begin{enumerate}
        \renewcommand{\theenumi}{\textbf{(\arabic{enumi})}}

        \renewcommand{\thefootnote}{\fnsymbol{footnote}}
        \setcounter{footnote}{3}
        
        \item\label{choose (1)} $x_2y_2\in G_1, x_2t,tt^+ \footnote{In order to match our symbols, if we entered procedure ~\ref{ope:a6}, and we replace $tt^+$ with $tw$ in some procedure later from \ref{ope:a1}*-\ref{ope:a7}*, which implies $w\in \{y_{r-1},y_r\}$, then we exchange the name of $t^+$ and $w$. The same arguments also hold for \ref{ope:a6}*.}\in G_2$ as $x_2y_2,x_2t^+$ are all out-arcs of $\{x_1,x_2\}$ in $\bar{G}_\text{new}$.

        \item\label{choose (2)} Let $G_i$ contain 2-cycle $x_1x_2x_1$ for $i=1,2$. Because the parallel arcs do not contribute to making a graph strong.

        \item\label{choose (3)} For any $v\in V_2\backslash\{x_1,x_2,y_2\}$, $vx_i\in A(G_1)$ and $vx_{3-i}\in A(G_2)$ for some $i\in [2]$ since there are 2-cycle $x_1x_2x_1$ in both $G_1$ and $G_2$.

        \item\label{choose (4)} $y_2x_1\in G_2$, as if not, we can move it from $G_1$ to $G_2$, we can check $G_1$ is still strong by finding a $(y_2,x_2)$-path in $G_1$. Since $G_1$ is strong before moving $y_2x_1$, there is a $(y_2,v)$-path $P$ for any $v\in V_2\backslash\{x_1,x_2,y_2\}$ as $|V_2|\geq 4$, and observe that $y_2x_1\notin P$ as $x_1$  can only reach to $x_2$, and $x_2$ can only reach to $x_1$ and $y_2$. Now, we are done because we have $vx_2\in G_1$ or $vx_1\in G_1$ by~\ref{choose (3)}.
    \end{enumerate}

    \begin{claim}\label{clm:t^+x1 not in G2}
        $G_2$ is still strong after removing $t^+x_1$ and $t^+x_2$(if it exists, as $y_2x_2$ may not in $D$). 
    \end{claim}

    \begin{proof}
        We prove this by finding another $(t^+,\{x_1,x_2\})$-path in $G_2$ without passing through $t^+x_1$ and $t^+x_2$, given that there is a 2-cycle $x_1x_2x_1$ in $G_2$. Since $G_2$ is strong before removing any arcs, there is a $(t^+,v)$-path $P$ for any $v\in V_2\backslash\{x_1,x_2,t^+\}$ as $|V_2|\geq 4$. Observe that $t^+x_1,t^+x_2\notin P$ as $x_1$  can only reach $x_2$, $x_2$ can only reach $x_1$ and $t$, and $t$ can only reach $t^+$. Now, we are done because we have $vx_1\in G_2$ or $vx_2\in G_2$ by~\ref{choose (3)} and~\ref{choose (4)}.
    \end{proof}

We will continue by considering the following two cases.
    
    \noindent\textbf{Case 1:} We do not enter \ref{ope:a6}* and \ref{ope:a7}*.

    \textbf{Case 1.1:} We enter~\ref{ope:a6}. Recall that there exists $w\in N^+_D(t)\backslash\{x_1,t^+\}$. 
    
    1. By Claim~\ref{clm:t^+x1 not in G2}, we move $t^+x_1$ and $t^+x_2$(if it exists) from $G_2$ to $G_1$. Both $G_1$ and $G_2$ remain strong. 
    
    2. We remove the additional arcs $x_1x_2,x_2x_1$ from $G_1$ and the additional arc $x_2x_1$ from $G_2$. 
    
    3. We add arcs $x_1t, tw$, Which are not in $G_\text{new}$, to $G_1$. 
    
    We want to show that through reallocation of arcs in $G_1$ and $G_2$, the resulting two graphs (we continue to refer to as $G_1$ and $G_2$ for convenience) remain strong and can form a pending decomposition.
    In the following, we only move the arcs incident to $x_1$ and $x_2$, in addition, for any vertex $v\in V_2\backslash\{x_1,x_2,y_2\}$, once we move $vx_i$ from $G_j$ to $G_{3-j}$, then we move $vx_{3-i}$ from $G_{3-j}$ to $G_j$ for $i,j=[2]$. After performing this reallocation, it suffices to verify the presence of an $(x_1,x_2)$-path and an $(x_2,x_1)$-path in each $G_i$ for $i=[2]$ to ensure that both $G_1$ and $G_2$ remain strong.
    \begin{itemize}
	\item $t^+=y_2$: As $t^+=y_2$, we have moved $y_2x_1$ to $G_1$. If $w\neq x_2$, we choose $wx_2$ in $G_1$ and $wx_1$ in $G_2$ as $w\neq t^+$. $G_1$ is strong as $x_1twx_2y_2x_1$ is a cycle; Observe that there is a $(y_2,w)$-path $P$ in $G_2$ which do not go through $x_1$ and $x_2$, then $x_1x_2ty_2Pwx_1$ is a cycle, $G_2$ is strong. If $w=x_2$, then there exists $v\in V_2\backslash\{x_1,x_2,y_2\}$ as $|V_2|\geq 4$, and we choose $vx_2$ in $G_1$ and $vx_1$ in $G_2$. By a similar argument, we have that $G_1$ and $G_2$ are strong. 
	
	\item $t^+\neq y_2$: We have moved $t^+x_1$ and $t^+x_2$ to $G_1$. Observe that there is a $(y_2,t^+)$-path $Q$ in $G_1$ which do not go through $x_1$ and $x_2$, then $x_2y_2Qt^+x_1$ is a $(x_2,x_1)$-path in $G_1$. If $w=x_2$, then $x_1tx_2$ is a $(x_2,x_1)$-path in $G_1$, if $w=y_2$, then $x_1ty_2Qt^+x_2$ is a $(x_2,x_1)$-path in $G_1$, if $w\neq x_2,y_2$, then we choose $wx_2$ in $G_1$ and $wx_2 \in G_2$, $x_1twx_2$ is a $(x_2,x_1)$-path in $G_1$. So $G_1$ is strong. As $y_2x_1\in G_2$, Observe that there is a $(t^+,y_2)$-path $P$ in $G_2$ which do not go through $x_1$ and $x_2$, then $x_1x_2tt^+Py_2x_1$ is a cycle, $G_2$ is strong.
    \end{itemize}

    \textbf{Case 1.2:} We enter~\ref{ope:a7}. Recall that $N^+_D(t)=\{x_1,t^+\}$ and there exists a vertex $w\in N^-_D(t)\backslash\{x_1,x_2\}$.
    
    1. We remove the additional arc $x_2x_1$ from $G_1$ and the additional arcs $x_1x_2,x_2x_1$ from $G_2$. 
    
    2. Add arcs $wt,tx_1$ to $G_1$ and arc $x_1t$ to $G_2$, which are not in $G_\text{new}$.
    
    What we will do next is analogous to Case $1.1$.
    
    If $t^+=y_2$, then there exists a vertex $v\in V_2\backslash\{x_1,x_2,y_2\}$ as $|V_2|\geq 4$, and we choose $vx_2$ in $G_2$ and $vx_1$ in $G_1$. Observe that there is a $(y_2,v)$-path $P$ in $G_2$ which does not pass through $x_1$ and $x_2$, then $x_1ty_2Pvx_2$ is an $(x_1,x_2)$-path in $G_2$, $x_2ty_2x_1$ is an $(x_1,x_2)$-path in $G_2$ as $y_2x_1\in G_2$. Thus, $G_2$ is strong. 
    
    If $t^+\neq y_2$, then we choose $t^+x_2$ in $G_2$ and $t^+x_1$ in $G_1$. Observe that there is a $(t^+,y_2)$-path $Q$ in $G_2$ which does not pass through $x_1$ and $x_2$, then $x_1tt^+x_2$ is an $(x_1,x_2)$-path in $G_2$, $x_2tt^+Qy_2x_1$ is a $(x_2,x_1)$-path in $G_2$ as $y_2x_1\in G_2$. So $G_2$ is strong.
    
    As $x_1x_2\in G_1$, we only need to check the presence of an $(x_2,x_1)$-path in $G_1$. If $w=y_2$, then $x_2y_2tx_1$ is the path we need, and if $w\neq x_2$, observe that there is a $(y_2,w)$-path $P$ in $G_1$ which does not pass through $x_1$ and $x_2$, then $x_2y_2Pwtx_1$ is the path we need. Thus, $G_1$ is strong in either case.
    
    Refer to Figure~\ref{fig:case of 67} for an illustration of Case 1.1 and Case 1.2. 

    \begin{figure}[H]
	\centering
	\begin{minipage}[t]{0.45\linewidth}
		\centering\begin{tikzpicture}[scale=0.3]
			
			\filldraw[black](15,0) circle (5pt)node[label=above:$x_1$](x1){};
			\filldraw[black](10,0) circle (5pt)node[label=above:{$x_2$}](x2){};
			\filldraw[black](6,-1.5) circle (5pt)node[label=above:{$y_2$}](y2){};
			\filldraw[black](12.5,4.5) circle (5pt)node[label=above:{$t$}](u){};
			\filldraw[black](3,1.5) circle (5pt)node[label=above:{$t^+$}](u+){};
			\filldraw[black](3,-1.5) circle (5pt)node[label=above:{$w$}](w){};
			
			\path[draw, bend left=30, line width=0.8pt, green] (x1) to (x2);
			\path[draw, bend left=30, line width=0.8pt, red] (x2) to (y2);
			\path[draw, line width=0.8pt, red] (x1) to (u);
			\path[draw, line width=0.8pt, green] (x2) to (u);
			\path[draw, line width=0.8pt, red] (u) to (w);
			\path[draw, line width=0.8pt, green] (u) to (u+);

		\end{tikzpicture}\caption*{Case 1.1}
	\end{minipage}
	\begin{minipage}[t]{0.45\linewidth}
			\centering\begin{tikzpicture}[scale=0.3]
				
				\filldraw[black](15,0) circle (5pt)node[label=above:$x_1$](x1){};
				\filldraw[black](10,0) circle (5pt)node[label=above:{$x_2$}](x2){};
				\filldraw[black](6,-1.5) circle (5pt)node[label=above:{$y_2$}](y2){};
				\filldraw[black](12.5,4.5) circle (5pt)node[label=above:{$t$}](u){};
				\filldraw[black](3,1.5) circle (5pt)node[label=above:{$t^+$}](u+){};
				\filldraw[black](3,-1.5) circle (5pt)node[label=above:{$w$}](w){};
				
				\path[draw, bend left=30, line width=0.8pt, red] (x1) to (x2);
				\path[draw, bend left=30, line width=0.8pt, red] (x2) to (y2);
				\path[draw, line width=0.8pt, green, bend left=30] (x1) to (u);
				\path[draw, line width=0.8pt, red, bend left=30] (u) to (x1);
				\path[draw, line width=0.8pt, green] (x2) to (u);
				\path[draw, line width=0.8pt, red] (w) to (u);
				\path[draw, line width=0.8pt, green] (u) to (u+);	
    
			\end{tikzpicture}\caption*{Case 1.2}
	\end{minipage}
        \hfill
	\caption{An illustration of Case 1.1 and Case 1.2, where red arcs are in $G_1$, green arcs are in $G_2$.}
	\label{fig:case of 67}
    \end{figure}

    \noindent\textbf{Case 2:} We also enter \ref{ope:a6}* or \ref{ope:a7}*.

    \textbf{Case 2.1:} When $t=t^*$, then the procedures we have entered are \ref{ope:a6} and \ref{ope:a6}*, as $x_1t, x_2t, t^*y_{r-1}, t^*y_r\in A(D)$. 

    1. Note that $x_2t^+=t^{*-}y_{r-1}$ in $\bar{G}_\text{new}$, we have a stronger result of \ref{choose (1)}, that is $x_2y_2,x_{r-1},y_{r-1}\in G_1, x_2t,tt^+\in G_2$.

    2. A stronger result of \ref{choose (2)}, we let $G_i$ contain 2-cycle $y_ry_{r-1}y_r$ for $i= 1,2$.

    3. We remove the additional arcs $x_1x_2,x_2x_1,y_ry_{r-1},y_{r-1}y_r$ from $G_1$, and the additional arcs $x_2x_1$, $y_ry_{r-1}$ from $G_1$. And we add arcs $x_1t,ty_r$, which are not in $G_\text{new}$, to $G_1$.

    In the following, we only move the arcs $y_rx_2,y_{r-1}x_1,y_rx_1$ and if we can find cycles containing the vertices $x_1x_2y_{r-1},y_r$ in both $G_1$ and $G_2$, then $G_1$ and $G_2$ are strong.
    
    We move $y_rx_1$ to $G_2$, and move $y_rx_2,y_{r-1}x_1$ to $G_1$. In $G_2$, $x_1x_2ty_{r-1}y_rx_1$ is the cycle we need, so $G_2$ is strong. If $r-1=2$, then $x_2y_{r-1}x_1ty_rx_2$ is the cycle we need, and if $r-1\neq 2$, then observe that there is a $(y_2,x_{r-1})$-path $P$ (of course $y_2=x_{r-1}$ is possible) in $G_1$ which does not pass through $x_1,x_2,y_{r-1},y_r$ (the same path as in $G_1$ before removing additional arcs), then $x_2y_2Px_{r-1}y_{r-1}x_1ty_rx_2$ is the cycle we need, so $G_2$ is strong.

    \textbf{Case 2.2:} When $t\neq t^*$:
   
    We recommend that readers refer to Remark~\ref{rmk:legal} before proceeding with the following operations.
    
    1. We do the same arguments as those in Case 1.1 if we have entered \ref{ope:a6} or Case 1.2 if we have entered \ref{ope:a7} to obtain a pending decomposition of $D$ with the additional arcs $y_ry_{r-1},y_ry_{r-1},y_{r-1}y_r$.

    2. We apply \ref{choose (1)}*-\ref{choose (4)}* on $G_1$ and $G_2$, where \ref{choose (1)}*-\ref{choose (4)}* denote the symmetric operations of \ref{choose (1)}-\ref{choose (4)}.

    3.  We do the same arguments as those in Case 1.1* if we entered \ref{ope:a6}* or Case 1.2* if we entered \ref{ope:a7}* to get a pending decomposition of $D$, where Case 1.1* and Case 1.2* are the symmetric cases of Case 1.1 and Case 1.2.

    In every case from Case 1 to Case 2, we can always get a pending decomposition $G_1$ and $G_2$ of $D$, which meets our needs.
\end{proof}
\begin{remark}\label{rmk:legal}
    Recall that during the process of this proof, when we apply \ref{choose (1)}-\ref{choose (4)} and Case 1.1, Case 1.2, all the conditions we need are:

    \begin{enumerate}[i).]
        \item\label{i)} $G_1$ and $G_2$ are strong;
        
        \item\label{ii)} All out-arcs of $\{x_1,x_2\}$ in $A(G_1)\cup A(G_2)$ before we do \ref{choose (1)} are $x_2y_2,x_2t$;

        \item\label{iii)} All arcs incident to $t$ in $\in A(G_1)\cup A(G_2)$ are $x_2t$ and $tt^+$, so we can find a distinct in-arc and a distinct out-arc of $t$ in $D$ to add to $G_1$.
    \end{enumerate}

    To do \ref{choose (1)}*-\ref{choose (4)}* and Case 1.1*, Case 1.2*, we need to check the symmetric conditions, which are:

    \begin{enumerate}[i)*.]
        \item\label{i)} $G_1$ and $G_2$ are strong;
        
        \item\label{ii)} All in-arcs of $\{y_{r-1},y_r\}$ in $A(G_1)\cup A(G_2)$ before we do \ref{choose (1)}* are $x_{r-1}y_{r-1},t^*y_{r-1}$;

        \item\label{iii)} All arcs incident to $t^*$ in $\in A(G_1)\cup A(G_2)$ are $t^*y_{r-1},t^{*-}t^*$, so we can find a distinct in-arc and a distinct out-arc of $t$ in $D$ to add to $G_1$.
    \end{enumerate}

 \ref{i)})* is straightforward to verify. For \ref{ii)})*, as $t^*\neq t$ and $N^-(y_r)\cap V_1=N^-(y_{r-1})\cap V_1=\{t^*\}$, so all in-arcs of $\{y_{r-1},y_r\}$ in $A(G_1)\cup A(G_2)$ are $x_{r-1}y_{r-1},t^*y_{r-1}$. For \ref{iii)})*, because $t^*\neq t$, we did not add any arcs incident to $t^*$. This necessitates distinguishing between Case 2.1 and Case 2.2 in this proof.
\end{remark}
	
\subsection{$D[ V_2]$ is not strong}

If $D[V_2]$ is not strong, then it has the acyclic ordering of its strong component $C_1,\ldots{},C_p$ ($p\geq 2$). Similar to Lemma~\ref{lem:cut-arc}, we have the following lemma.

\begin{lemma}\label{lem:cut-arcs in not strong}
	If the multi-digraph $D[V_2]+C$, where $C$ is a set of some arcs with both endpoints in $V_2$, has two arc-disjoint $(C_p,C_1)$-paths $Q_1$ and $Q_2$, then $D[V_2]+C$ is strong. Besides, if $D[V_2]+C$ is not 2-arc-strong, which means there exists a cut-arc $xy$, then $xy \notin C$ and at least one of the following occurs:
	\begin{enumerate}
		\renewcommand{\theenumi}{($\beta$\arabic{enumi})}
		\item\label{(beta1)} $C_p=\{y\},$ $C_{p-1}=\{x\}$, $d^+_{D[V_2]+C}(x)=1$,
		
		\item\label{(beta2)} $C_1=\{x\},$ $C_{2}=\{y\}$, $d^-_{D[V_2]+C}(y)=1$,
		
		\item\label{(beta3)} $xy\in A(D[C_p])$,
		
		\item\label{(beta4)} $xy\in A(D[C_1])$.
	\end{enumerate}
\end{lemma}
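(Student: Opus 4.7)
The plan is to follow closely the template of Lemma~\ref{lem:cut-arc}. Since $D[V_2]$ is semicomplete, the acyclic ordering $C_1,\ldots,C_p$ of its strong components has the following property: for any $u\in C_i$ and $v\in C_j$ with $i<j$ the arc $uv$ lies in $A(D[V_2])$, because the reverse arc $vu$ would merge $C_i$ and $C_j$. Consequently every vertex reaches $C_p$ and is reached from $C_1$ using such direct forward arcs (trivially when it already lies in $C_p$ or $C_1$), and one of the arc-disjoint paths $Q_1,Q_2$ connects $C_p$ back to $C_1$; composing these pieces yields a walk between any two vertices, so $D[V_2]+C$ is strong.

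For the cut-arc statement, let $xy$ be a cut-arc of $D[V_2]+C$. I would first rule out $xy\in C$. If $xy\in C$ and $xy\in A(D[V_2])$ then $D[V_2]+C$ contains parallel copies of $xy$ and deleting one keeps it strong. If $xy\in C$ but $xy\notin A(D[V_2])$, semicompleteness forces $yx\in A(D[V_2])$, placing $y\in C_{j'}$, $x\in C_{i'}$ with $j'\leq i'$; when $j'=i'$ the strong digraph $D[C_{j'}]$ already supplies an $(x,y)$-path not using $xy$, while when $j'<i'$ the composite route $x\dashrightarrow C_p \stackrel{Q_k}{\longrightarrow} C_1 \dashrightarrow y$ (with $Q_k$ chosen arc-disjoint from $xy$) does so. Either way $xy$ is not a cut-arc, contradicting the assumption, so $xy\notin C$ and $xy\in A(D[V_2])$, which forces $x\in C_i$, $y\in C_j$ with $i\leq j$.

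I would then split on the position of $x,y$. If $i=j$ and $1<i<p$, the composite route $x\dashrightarrow C_p\stackrel{Q_k}{\longrightarrow} C_1\dashrightarrow y$ bypasses $xy$ (every direct forward arc used crosses between components and so differs from $xy$), so $i=j=p$, giving $(\beta_3)$, or $i=j=1$, giving $(\beta_4)$. If $i<j$ and both $j<p$ and $i>1$ the same composite route again bypasses $xy$, so $j=p$ or $i=1$. When $j=p$ I would peel off the conditions one at a time: $|C_p|\geq 2$ would let me route through any $z\in C_p\setminus\{y\}$ via the direct arc $xz$ and the strong digraph $D[C_p]$ (noting $xy\notin A(D[C_p])$ since $x\notin C_p$), forcing $C_p=\{y\}$; $i<p-1$ would let me step through a vertex of $C_{p-1}$ and its direct arc to $y$, forcing $i=p-1$; $|C_{p-1}|\geq 2$ would allow rerouting inside $D[C_{p-1}]$ to a different vertex $x'$ and then using $x'y$ direct, forcing $C_{p-1}=\{x\}$; finally, any additional out-arc $xv$ at $x$ has $v\in V_2\setminus\{x,y\}$, so $v$ has a direct arc $vy$ and $xvy$ is an alternative path, forcing $d^+_{D[V_2]+C}(x)=1$. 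This is $(\beta_1)$; the case $i=1$ is symmetric and yields $(\beta_2)$.

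The main obstacle is the careful arc-accounting required in each sub-case to verify that the alternative $(x,y)$-path truly avoids the single arc $xy$, and handling degenerate configurations such as $Q_k$ being forced to start at $y$ when $C_p=\{y\}$. The new ingredient compared to Lemma~\ref{lem:cut-arc} is the auxiliary arc set $C$, which obliges a preliminary step ruling out $xy\in C$; once that step is settled, the peeling argument runs along the same lines as in the nice-decomposition setting.
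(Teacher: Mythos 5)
Your proposal is correct and follows the same overall strategy as the paper: establish strongness by routing every vertex through $C_p$, along one of the paths $Q_k$, and back from $C_1$; then rule out $xy\in C$; then peel off the degenerate configurations to land in one of the four listed cases. The only genuine departure is in the step $xy\notin C$. You split on whether $xy\in A(D[V_2])$ (parallel copies make the deletion harmless) or $yx\in A(D[V_2])$ (rebuild the composite route), whereas the paper dispatches this in one line: since $Q_1,Q_2$ are arc-disjoint, some $Q_k$ misses the cut-arc, and $D[V_2]+Q_k\subseteq(D[V_2]+C)\setminus xy$ is already strong by the first part of the lemma, giving an immediate contradiction. That uniform observation sidesteps both of your subcases and the attendant care about whether the within-component or between-component pieces reuse $xy$. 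One small point worth making explicit in your final degree step: to extract an out-arc $xv$ with $v\neq y$ from $d^+_{D[V_2]+C}(x)\geq 2$, you need $xy$ to have multiplicity one in $D[V_2]+C$; this does follow because the cut-arc copy lies in $A(D[V_2])$ (a simple digraph) and you have already shown $xy\notin C$, but it should be recorded --- the paper instead allows $x^+=y$ and treats that possibility directly. With that noted, both arguments reach the same peeling conclusions and the proposal is sound.
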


\begin{proof}
    Since $D[V_2]$ is semicomplete and $C_1,\ldots{},C_p$ is its acyclic ordering, $D[V_2]+Q_i,i\in [2]$ is strong, and so $D[V_2]+C$ is strong. 
 
    If $D[V_2]+C$ is not 2-arc-strong, let $xy$ be a cut-arc in $D[V_2]+C$. If $xy\in C$, then there exists $i\in [2]$ such that $xy\notin Q_i$ since $Q_1$ and $Q_2$ are arc-disjoint. This yields that $(D[V_2]+C) \setminus xy\supseteq D[V_2]+Q_{i}$ is still strong, which contradicts to the fact $xy$ is cut-arc. Hence $xy\notin C$.
	
	
    By checking the existence of the following form of path
    \[x\dashrightarrow C_p\stackrel{Q_i}{\longrightarrow}C_1 \dashrightarrow y,\]
    in $(D[V_2]+C) \setminus xy$, where $xy\notin Q_i$, we have $xy\notin D[ V_2-C_1-C_p]$.
	
	If $y\in C_p, x\notin C_p$. If $C_p\neq \{y\},$ then there exists $z\in C_p$ and $z$ dominates $y$ because of $C_p$ is strong, and $x\rightarrow z\rightarrow y$ is an $(x,y)$-path other than $xy$, which contradicts to the fact that $xy$ is a cut-arc in $D[V_2]+C$. Hence $C_p= \{y\}$. If $x\notin C_{p-1}$, then $x\rightarrow z\rightarrow y$ is an $(x,y)$-path other than $xy$, where $z\in C_{p-1}$, which contradicts to the fact that $xy$ is a cut-arc in $D[V_2]+C$. Hence $x\in C_{p-1}$. If $C_{p-1}\neq \{x\}$, then as $C_{p-1}$ is strong, there exists out-arc $xz\in D[C_{p-1}]$, and $x\rightarrow z\rightarrow y$ is an $(x,y)$-path other than $xy$, which contradicts to the fact that $xy$ is a cut-arc in $D[V_2]+C$. Hence $C_{p-1}=\{x\}$. If $d_{D[V_2]+C}^+(x)\geq 2$, then there exists an $x^+$ (maybe $y$), such that $x\rightarrow x^+\rightarrow y$ (or $x\rightarrow y$) in $(D[V_2]+C) \setminus xy$, which contradicts to the fact that $xy$ is a cut-arc in $D[V_2]+C$. Hence $d^+_{D[ V_2]+C}(x)=1$. In conclusion, $C_p=\{y\},$ $C_{p-1}=\{x\}$, $d^+_{D[ V_2]+C}(x)=1$.
	
    Similarly, if $x\in C_1, y\notin C_1$, then $C_1=\{x\},$ $C_{2}=\{y\}$, $d^-_{D[ V_2]+C}(y)=1$.
	
    As $xy\in D[V_2]$, then $x\in C_p, y\notin C_p$ or $y\in C_1, x\notin C_1$ will not occur, so the left cases are \ref{(beta3)} and \ref{(beta4)}.
\end{proof}

Let $(U_{1},\ldots, U_{l})$ be the nice decomposition of $C_p$ when $|C_p|\geq 4$.  Similarly, suppose that $(W_{1},\ldots, W_{p})$ is the nice decomposition of $C_1$ when $|C_1|\geq 4$.
\[X=\begin{cases}
	U_{l}, &\mbox{if } |C_p|\geq 4;\\
	C_p, &\mbox{otherwise}.
\end{cases} \mbox{ \;\;and \;\;} Y=\begin{cases}
	W_{1}, &\mbox{if } |C_1|\geq 4;\\
	C_1, &\mbox{otherwise}.
\end{cases}\]

The following proof is similar to the proof for the case that $D[V_2]$ is strong. First, we let $B=\emptyset$ and $\{Q_1,Q_2\}$ be an $(X,Y)_{B}$-critical path pair. If $D[V_2]+\bar{A}_{Q_1,Q_2}(B)+\bar{B}$ is 2-arc-strong, then by Lemma~\ref{lem:critical AC to SAD}, Theorem~\ref{thm:multi semi SAD} and $|V_2|\geq 5$, we are done. If it is not 2-arc-strong, then by Lemma~\ref{lem:cut-arcs in not strong}, at least one of \ref{(beta1)} to \ref{(beta4)} occurs. Note that \ref{(beta1)} and \ref{(beta3)} (or \ref{(beta2)} and \ref{(beta4)}) can not occur at the same time. By symmetry of \ref{(beta1)} and \ref{(beta2)} and also symmetry of \ref{(beta3)} and \ref{(beta4)}, we may assume that \ref{(beta1)} or \ref{(beta3)} occurs (of course, \ref{(beta2)} or \ref{(beta4)} may occur at the same time). Similar to the proof for $D[V_2]$ is strong, we do the following operations for different cases. 

For convenience, we denote by $G$ the digraph $D[V_2]+B+A_{Q_1,Q_2}(B)$ and $\bar{G}$ the digraph $D[V_2]+\bar{B}+\bar{A}_{Q_1,Q_2}(B)$. Similarly, we can define $\bar{G}_{\text{new}}$ after we define $G_\text{new}$, which we will determine later in different cases. We give the corresponding procedures on $D$, $B$ and $\{Q_1,Q_2\}$ to avoid \ref{(beta1)} or \ref{(beta3)}.

Here, we recall an operation for rebuilding $B$.
For a vertex $f\in V_1(D)$, let $e$ be an in-neighbor of $f$. If $f\notin V_1(B)$, there is an out-neighbor $f^+$ of $f$ which is not $e$, since $D$ is 2-arc-strong. Add arcs $ef$ and $ff^+$ to $B$. If $f\in V_1(B)$, which means there are arcs $f^-f,ff^+ \in B$, then replace $f^-f$ with $ef$ and replace $ff^+$ with $ff'$, where $f'$ is another out-neighbor of $f$, if $f^+=e$. We use $B\leftarrow\{ef,ff^+\}$ to denote this operation.

\subsubsection{$|C_p|=1$}
If \ref{(beta1)} or \ref{(beta3)} occurs, then it can only be \ref{(beta1)}. And we have $C_p=\{b\},$ $C_{p-1}=\{a\}$, $d^+_{D[V_2]+C}(a)=1$, where $ab$ is the cut-arc in $D[V_2]+\bar{A}_{Q_1,Q_2}(B)+\bar{B}$.
We may assume $Q_1=bb_1b_1^+Q_1[b_1^+,Y], Q_2=bb_2b_2^+Q_2[b_2^+,Y]$, where $b_1,b_2\in V_1$, $a\notin V(Q_1), a\notin V(Q_2)$. As $D$ is 2-arc-strong, there is another out-neighbor of $a$, say $u$. And $u \in V_1$ since $a$ has exactly one out-neighbor in $V_2$. We give the following procedures to avoid \ref{(beta1)}. 

\begin{enumerate}
    \renewcommand{\theenumi}{\textbf{(B\arabic{enumi})}}
    \item\label{ope:b1} When $u \notin V_{Q_1, Q_2}$:
    
    Let $B_\text{new}:=B\leftarrow\{au,uu^+\}$. Observe that $V_{Q_1,Q_2}(B_\text{new})\cap V_1(B_\text{new})=\emptyset$, so there is an $(X,Y)_{B_\text{new}}$-critical path pair $\{Q_3,Q_4\}$ such that $A_{Q_3,Q_4}(B_\text{new})\subseteq A_{Q_1,Q_2}(B_\text{new})$. Let $G_\text{new}$ be $D[V_2]+B_\text{new}+A_{Q_3,Q_4}(B_\text{new})$. In this case, $d^{+}_{\bar{G}_\text{new}}(a) \geq 2$ which avoids \ref{(beta1)}. 
    
    Additionally, Remark~\ref{rmk:end procedure} applies here as do the following procedures from \ref{ope:b2} to \ref{ope:d5}. Therefore, either $|A_{Q_3,Q_4}(B_\text{new})|<|A_{Q_1,Q_2}(B)|$ or for each vertex $v \in V_2\backslash\{a\}$, we have $d^-_{\Bar{G}}(v) \leq d^-_{\Bar{G}_{\text{new}}}(v)$, and $V_1(B)\subseteq V_1(B_\text{new})$.
    
    \item\label{ope:b2} When there exists $i\in [2]$ such that $u\in Q_i$, and $b_i\neq u$:

    If $u\in V_1(B)$, we define $B_\text{new}:=B\leftarrow\{au,uu^+\}$, and if $u\notin V_1(B)$, we define $B_{\text{new}}:=B$. Let $Q_i'=bb_ib_i^+auQ_i[u,Y]$, observe that $Q_i'$ is arc-disjoint with $Q_{3-i}$, and $V_{Q_{3-i}',Q_i'}(B_\text{new})\cap V_1(B_\text{new})=\emptyset$, so there is an $(X,Y)_{B_{\text{new}}}$-critical path pair $\{Q_3,Q_4\}$ such that $A_{Q_3,Q_4}(B_\text{new})\subseteq A_{Q_{3-i},Q_i'}(B_\text{new})$. 
      
    Let $G_\text{new} := D[V_2]+B_\text{new}+A_{Q_3,Q_4}(B_\text{new})$. If $au\notin G_{\text{new}}$, then $B_\text{new}=B$, $A_{Q_3,Q_4}(B)\subsetneq A_{Q_1,Q_2}(B)$  as $u^-u\notin A_{Q_3,Q_4}(B)$, which contradicts to the fact that $\{Q_1,Q_2\}$ is $(X, Y)_{B}$-critical. So, we have $au\in G_{\text{new}}$, then we have $d^{+}_{\bar{G}_\text{new}}(a) \geq 2$, which avoids \ref{(beta1)}.
    
    \textbf{If \ref{ope:b1}-\ref{ope:b2} do not occur for all $u\in V_1 \cap N^{+}_D(a)$, then we have $N^+_D(a)\cap V_1\subseteq \{b_1,b_2\}$. If $b_i\in N^+_D(a)\cap V_1$ for some $i\in [2]$, we have $b_i\notin Q_{3-i}$, since if $b_i\notin Q_{3-i}$, this makes \ref{ope:b2} occur as $b_1\neq b_2$. Then we enter the following procedures.}\\
    
    \item\label{ope:b3} When there exists another out-neighbor $b_3\neq b_1,b_2$ of $b$:

    As $N^+_D(a)\cap V_1\neq \emptyset$, there exists $i\in [2]$, such that $b_i\in N^+_D(a)\cap V_1$. Like \ref{ope:a4} and \ref{ope:a5}, firstly, we get $B_\text{new}$ from $B$ by doing the following two operations.
    \begin{itemize}
        \item $B\leftarrow\{ab_i,b_ib_i^+\}$.
        

        \item If $b_3\notin V_{Q_1,Q_2}(B)$, then $B\leftarrow\{bb_3,b_3b_3^+\}$. And if $b_3\in V_{Q_1,Q_2}(B)$, then we do nothing.

    \end{itemize}

    If $b_3\notin Q_1, Q_2$, then let $Q_i'=bb_3b_3^+ab_iQ_i[b_i,Y]$ ($b_3^+$ can be $a$, or in $Y$), $Q_{3-i}'=Q_{3-i}$. 
    
    If $b_3\in Q_i$, then let $Q_i'=bb_3Q_i[b_3,Y]$, $Q_{3-i}'=Q_{3-i}$. 
    
    If $b_3\notin Q_i,b_3\in Q_{3-i}$, then let $Q_i'=bb_3Q_{3-i}[b_3,Y]$, $Q_{3-i}'=bb_{3-i}b_{3-i}^+ab_iQ_i[b_i,Y]$ ($b_{3-i}^+$ can be in $Y$).
    
    Observe that $Q_1'$ is arc-disjoint with $Q_2'$, and $V_{Q_1',Q_2'}(B_\text{new})\cap V_1(B_\text{new})=\emptyset$, so there an $(X,Y)_{B_{\text{new}}}$-critical path pair $\{Q_3,Q_4\}$ such that $A_{Q_3,Q_4}(B_\text{new})\subseteq A_{Q_1',Q_2'}(B_\text{new})$. Let $G_\text{new} := D[V_2]+B_\text{new}+A_{Q_3,Q_4}(B_\text{new})$. We have $d^{+}_{\bar{G}_\text{new}}(a) \geq 2$, which avoids \ref{(beta1)}, as $ab_i\in B_\text{new}$.
 
    \item\label{ope:b4} When $N_D^+(b)=\{b_1,b_2\}$, and $N_D^+(a)\cap V_1=\{b_i\}$ for some $i=[2]$:

    If $N_D^+(b_i)=\{b_i^+,a\},N_D^-(b_i)=\{a,b\}$, note that $D$ satisfies the structure in Lemma \ref{lem:counter example}. Therefore, $D$ has no strong arc decomposition.

    If there exists $w\in N^+_D(b_i)\backslash\{b_i^+,a\}$ or if $N_D^+(b_i)=\{b_i^+,a\}$ and there exists $w\in N^-_D(b_i)\backslash\{a,b\}$, then we additionally add two parallel arcs $ba$ and another arc $ab$ to $D[V_2]$. Let $B_\text{new}=B\leftarrow\{ab_i,b_ib_i^+\}$, and we can check $\{Q_3=Q_1, Q_4=Q_2\}$ is an $(X,Y)_{B_{\text{new}}}$-critical path pair, and let $G_\text{new} := D[V_2]+B_\text{new}+A_{Q_3,Q_4}(B_\text{new})$. We have $d^{+}_{\bar{G}_\text{new}}(a) \geq 2$, which avoids \ref{(beta1)}. Compared with the case $D[V_2]$ is strong but not 2-arc-strong, if we regard vertex $a$ as $x_1$, $b$ as $x_2$, $b_i$ as $t$, $2$-path $bb_{3-i}b_{3-i}^+$ as arc $x_2y_2$, then what we do here is indeed the same as those in \ref{ope:a6} and \ref{ope:a7}. Let $R:=\{b_i\}$, we call it optional vertex set.
    
    \item\label{ope:b5} When $N_D^+(b)=\{b_1,b_2\}$, $N_D^+(a)\cap V_1 =\{b_1,b_2\}$, and $N_D^+(b_i)=\{b_i^+,a\},N_D^-(b_i)=\{a,b\}$ for all $i\in [2]$:
    
    We additionally add two parallel arcs $ba$ and another arc $ab$ to $D[V_2]$. Let $B_\text{new}=(B\leftarrow\{ab_1,b_1b_1^+\})\leftarrow\{ab_2,b_2b_2^+\}, Q_3=bab_1Q_1[b_1,Y], Q_4=bab_2Q_2[b_2,Y]$. Observe that $\{Q_3,Q_4\}$ is $(X,Y)_{B_{\text{new}}}$-critical path pair, we define $G_\text{new} := D[V_2]+B_\text{new}+A_{Q_3,Q_4}(B_\text{new})$.

    \item\label{ope:b6} When $N_D^+(b)=\{b_1,b_2\}$, $N_D^+(a)\cap V_1 =\{b_1,b_2\}$, and not in \ref{ope:b5}:

    Let $R$ be th optional vertex set defined as $R:=\{b_i~|i\in [2], \text{such that $N_D^+(b_i)\neq\{b_i^+,a\}$ or $N_D^-(b_i)\neq\{a,b\}$}\}$. Note that $R\neq \emptyset$, otherwise we are in case~\ref{ope:b5}. Then, we proceed by applying the same operations as those in \ref{ope:b4} for some $b_i\in R$. Note that the choice of $b_i$ depends on additional considerations related to \ref{(beta2)} and \ref{(beta4)}.
\end{enumerate}

\subsubsection{$|C_p|=2$}

If \ref{(beta1)} or \ref{(beta3)} occurs, then it must be \ref{(beta3)}. Since $C_p$ is strong, then $D[C_p]$ forms a 2-cycle. As $G$ has a cut-arc in $D[C_p]$, we can deduce that paths $Q_1$ and $Q_2$ must share the same initial vertex, denoted $b$, and another vertex in $C_p$ is denoted $a$. Observe that only $ab$ can be the cut-arc and the out-degree of $a$ in $G$ must be $1$, so if we can prove $d^{+}_{\bar{G}_\text{new}}(a) \geq 2,d^{+}_{\bar{G}_\text{new}}(b) \geq 2$ for the $\bar{G}_\text{new}$ we get in the following, then we can avoid \ref{(beta3)}. As $D$ is 2-arc-strong, we have $u\in N^+_D(a)\cap V_1$, we proceed with the following operations:

\begin{enumerate}
    \renewcommand{\theenumi}{\textbf{(C\arabic{enumi})}}
    \item\label{ope:c1} When $u \notin V_{Q_1, Q_2}$:

    Let $B_\text{new}:=B\leftarrow\{au,uu^+\}$. Observe that $V_{Q_1,Q_2}(B_\text{new})\cap V_1(B_\text{new})=\emptyset$, so there is a $(X,Y)_{B_\text{new}}$-critical path pair $\{Q_3,Q_4\}$ such that $A_{Q_3,Q_4}(B_\text{new})\subseteq A_{Q_1,Q_2}(B)$. Let $G_\text{new}$ be $D[V_2]+B_\text{new}+A_{Q_3,Q_4}(B_\text{new})$.

    In this case, $d^{+}_{\bar{G}_\text{new}}(a)=2,d^{+}_{\bar{G}_\text{new}}(b) \geq 2$ which avoids \ref{(beta3)} as $au\in B_\text{new}$. 
	
    \item\label{ope:c2} When $u\in V_{Q_1, Q_2}$:

    There exists $i\in [2]$ such that $u\in Q_i$. If $u\in V_1(B)$, we define $B_\text{new}:=B\leftarrow\{au,uu^+\}$, and if $u\notin V_1(B)$, we define $B_{\text{new}}:=B$. Let $Q_i'=auQ_i[u,Y]$, observe that $Q_i'$ is arc-disjoint with $Q_{3-i}$ and $V_{Q_i',Q_{3-i}}(B_\text{new})\cap V_1(B_\text{new})=\emptyset$, so there is an $(X,Y)_{B_{\text{new}}}$-critical path pair $\{Q_3,Q_4\}$ such that $A_{Q_3,Q_4}(B_\text{new})\subseteq A_{Q_{3-i},Q_i'}(B_\text{new})$. 
      
    Let $G_\text{new} := D[V_2]+B_\text{new}+A_{Q_3,Q_4}(B_\text{new})$. 

    If $au\notin G_{\text{new}}$, then $B_\text{new}=B$, $A_{Q_3,Q_4}(B)\subsetneq A_{Q_1,Q_2}(B)$  as $u^-u\notin A_{Q_3,Q_4}(B)$, which contradicts to the fact that $\{Q_1,Q_2\}$ is $(X, Y)_{B}$-critical. So, we have $au\in G_{\text{new}}$, then we have $d^{+}_{\bar{G}_\text{new}}(a)=2,d^{+}_{\bar{G}_\text{new}}(b) \geq 2$, which avoids \ref{(beta3)}.
\end{enumerate}

\subsubsection{$|C_p|=3$}

If either \ref{(beta1)} or \ref{(beta3)} occurs, it must specifically be \ref{(beta3)}. There exists a 3-cycle $abca$ in $D[C_p]$. We can perform preliminary operations on ${Q_1,Q_2}$ to obtain a somewhat `minimal' $(X,Y)$-path pair.

\begin{enumerate}
    \renewcommand{\theenumi}{\textbf{(D\arabic{enumi})}}
    \setcounter{enumi}{-1}
    \item\label{ope:d0} We may assume that $Q_1=q_1q_1^+Q_1[q_1^+,Y], Q_2=q_2q_2^+Q_2[q_2^+,Y]$ (The vertices $q_1$ and $q_2$ are allowed to coincide). If, for $v \in \{a,b,c\}$, there exists $u\in (V_1(Q_i)\backslash\{q_i^+\})\cap N^+(v)$ for some $i\in[2]$ and $vu\neq q_{3-i}q_{3-i}^+$, then there exists $u^-$ such that $u^-u\in A(Q_i)$. If $u^-u\in B$, we replace $u^-u$ with $vu$ in $B$ to get $B_{\text{new}}$, and if $u^-u\notin B$, we define $B_{\text{new}}:=B$. For this $i$, let $Q_i'=vuQ_i[u,Y]$, observe that $Q_i'$ is arc-disjoint with $Q_{3-i}$ and $V_{Q_i',Q_{3-i}}(B_\text{new})\cap V_1(B_\text{new})=\emptyset$, there exists an $(X,Y)_{B_{\text{new}}}$-critical path pair $\{Q_3,Q_4\}$ such that $A_{Q_3,Q_4}(B_\text{new})\subseteq A_{Q_{3-i},Q_i'}(B_\text{new})$. And we choose $Q_3=Q_{3-i},Q_4=Q_i'$ if $\{Q_i',Q_{3-i}\}$ is already an $(X,Y)_{B_{\text{new}}}$-critical path pair. Let $G_\text{new} := D[V_2]+B_\text{new}+A_{Q_3,Q_4}(B_\text{new})$. In the end, we rename $G_\text{new}$ as $G$, $B_\text{new}$ as $B$, $\{Q_3,Q_4\}$ as $\{Q_1,Q_2\}$. We repeat the above process until we can not find such vertices $v$ and $u$.
\end{enumerate}

\begin{claim}
Procedure~\ref{ope:d0} will stop in some $G$.
\end{claim}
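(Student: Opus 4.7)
The plan is to exhibit a strictly decreasing lexicographic measure on the configurations $(Q_1, Q_2, B)$ produced by the procedure. Define
\[
\Phi(Q_1, Q_2, B) \;:=\; \bigl(|A_{Q_1,Q_2}(B)|,\ \ell(Q_1) + \ell(Q_2)\bigr),
\]
ordered lexicographically, where $\ell(\cdot)$ counts the arcs of a path. Since both coordinates are non-negative integers, a strict decrease at every iteration forces termination.

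The crux is the auxiliary size bound $|A_{Q_{3-i},Q_i'}(B_{\text{new}})| \leq |A_{Q_1,Q_2}(B)|$. Setting $T := A'(Q_{3-i}) \cup A'(Q_i[u,Y])$ and using the decompositions $A'(Q_i') = \{vu\} \cup A'(Q_i[u,Y])$ against $A'(Q_i) = A'(Q_i[q_i,u]) \cup A'(Q_i[u,Y])$, a short set-theoretic computation reduces the difference of the two sizes to $|\{vu\} \setminus (T \cup B)| - |A'(Q_i[q_i,u]) \setminus (T \cup B)|$ when $u^{-}u \notin B$, and to $-|A'(Q_i[q_i,u]) \setminus (T \cup B)|$ when $u^{-}u \in B$ (where $B_{\text{new}}$ swaps $u^{-}u$ for $vu$). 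The arc $u^{-}u$ lies in $A'(Q_i[q_i,u])$, and it is not in $T$: not in $A'(Q_{3-i})$ by arc-disjointness of $Q_i$ with $Q_{3-i}$, and not in $A'(Q_i[u,Y])$ because that subpath begins at $u$. Hence $|A'(Q_i[q_i,u]) \setminus (T \cup B)| \geq 1$ whenever $u^{-}u \notin B$, and the claimed inequality follows in both cases.

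Combined with the critical-pair containment $A_{Q_3,Q_4}(B_{\text{new}}) \subseteq A_{Q_{3-i},Q_i'}(B_{\text{new}})$, this gives $|A_{Q_3,Q_4}(B_{\text{new}})| \leq |A_{Q_1,Q_2}(B)|$. If the inequality is strict, the first coordinate of $\Phi$ drops and we are done. Otherwise equality holds throughout, which forces $A_{Q_{3-i},Q_i'}(B_{\text{new}}) = A_{Q_3,Q_4}(B_{\text{new}})$ and therefore makes $\{Q_{3-i}, Q_i'\}$ itself $(X,Y)_{B_{\text{new}}}$-critical. By the stipulated selection rule the procedure then chooses $\{Q_3, Q_4\} = \{Q_{3-i}, Q_i'\}$. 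Since $u \neq q_i^{+}$ places $u$ at position at least $3$ on $Q_i$, the prefix $Q_i[q_i,u]$ contains at least two arcs, while $Q_i'$ replaces this prefix by the single arc $vu$; hence $\ell(Q_i') < \ell(Q_i)$, so the second coordinate of $\Phi$ strictly decreases.

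The main subtlety, and the place I would be most careful, is the $B$-bookkeeping in the swap case: one must ensure that the swap does not allow new arcs to slip into $A_{Q_{3-i},Q_i'}(B_{\text{new}}) \setminus A_{Q_1,Q_2}(B)$. This is handled cleanly by combining two observations: the new arc $vu$ itself is automatically in $B_{\text{new}}$ and hence excluded from $A_{Q_{3-i},Q_i'}(B_{\text{new}})$, while the deleted prefix arc $u^{-}u$ cannot reappear in $T$ by arc-disjointness. Once this is pinned down, well-foundedness of the lexicographic order on $\mathbb{Z}_{\geq 0}^{2}$ yields termination of the procedure.
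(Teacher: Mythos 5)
Your proof is correct, and it uses the same overall template as the paper — a two-coordinate lexicographic potential whose primary coordinate is $|A_{Q_1,Q_2}(B)|$ — but with a different secondary coordinate, and that difference is worth noting. The paper uses $|A_{Q_1,Q_2}|$ (the number of $V_1$--$V_2$ arcs on the two paths, without subtracting $B$) and asserts that in the equality case $|A_{Q_3,Q_4}|<|A_{Q_1,Q_2}|$. That inequality is in fact true, but only because $q_i$ lies in the terminal strong component $C_p$, which forces $q_i^+\in V_1$ (there is no arc from $C_p$ to $V_2\setminus C_p$ in the acyclic ordering), so $A'(Q_i[q_i,u])$ contains the two distinct $V_1$--$V_2$ arcs $q_iq_i^+$ and $u^-u$; the paper leaves this structural observation unstated. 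Your secondary coordinate $\ell(Q_1)+\ell(Q_2)$ sidesteps that entirely: the condition $u\ne q_i^+$ already places $u$ at position $\ge 3$, so the replacement of the $\ge 2$-arc prefix $Q_i[q_i,u]$ by the single arc $vu$ strictly shortens $Q_i$, regardless of where $q_i^+$ lives. This makes the termination proof more self-contained. Your set-theoretic justification of the auxiliary bound $|A_{Q_{3-i},Q_i'}(B_{\text{new}})|\le|A_{Q_1,Q_2}(B)|$ — decomposing against $T=A'(Q_{3-i})\cup A'(Q_i[u,Y])$, checking $u^-u\notin T$ and $vu\notin T$ by arc-disjointness and the $(X,Y)$-path condition, and treating the $B$-swap case separately — is also more explicit than the paper's one-line ``Observe that\dots''. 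The rest (the containment $A_{Q_3,Q_4}(B_{\text{new}})\subseteq A_{Q_{3-i},Q_i'}(B_{\text{new}})$ forcing equality, hence criticality of $\{Q_{3-i},Q_i'\}$, hence the procedure's selection $\{Q_3,Q_4\}=\{Q_{3-i},Q_i'\}$) matches the paper's reasoning exactly.
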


\begin{proof}
    Observe that $|A_{Q_3,Q_4}(B_\text{new})|\leq |A_{Q_i',Q_{3-i}}(B_\text{new})|\leq |A_{Q_1,Q_2}(B)|$. If $|A_{Q_3,Q_4}(B_\text{new})|=|A_{Q_1,Q_2}(B)|$, then $A_{Q_3,Q_4}(B_\text{new})=A_{Q_i',Q_{3-i}}(B_\text{new})$, so we have that $Q_3=Q_{3-i},Q_4=Q_i'$ and $|A_{Q_3, Q_4}|<|A_{Q_i, Q_{3-i}}|$. Since the original $|A_{Q_1,Q_2}|$ is bounded, the process of repeating \ref{ope:d0} will end in a finite number of repetitions if $|A_{Q_3,Q_4}(B_\text{new})|=|A_{Q_1,Q_2}(B)|$ always holds. Thus, if the claim does not hold, then the event that $|A_{Q_3,Q_4}(B_\text{new})|< |A_{Q_1,Q_2}(B)|$ will occur infinitely, which contradicts that the original $|A_{Q_1,Q_2}(B)|$ is bounded.
\end{proof}

\begin{observation}\label{obs: out-vertex q1q2}
    After \ref{ope:d0}, for any $v\in \{a,b,c\}$, if there exists $u\in N_D^+(v)\cap V_{Q_1,Q_2}$, then $u\in \{q_1^+,q_2^+\}$, where $Q_1=q_1q_1^+Q_1[q_1^+,Y], Q_2=q_2q_2^+Q_2[q_2^+,Y]$.
    
\end{observation}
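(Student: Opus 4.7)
The plan is to deduce this observation almost directly from the termination of procedure~\ref{ope:d0}. I would argue by contradiction: suppose that after \ref{ope:d0} has stopped, there still exist $v \in \{a,b,c\}$ and $u \in N_D^+(v) \cap V_{Q_1,Q_2}$ with $u \notin \{q_1^+, q_2^+\}$. By the definition of $V_{Q_1,Q_2}$, the vertex $u$ lies in $V_1$ and appears as a $V_1$-vertex on at least one of the two paths, so $u \in V_1(Q_i)$ for some $i \in [2]$; combined with $u \neq q_i^+$, this gives $u \in V_1(Q_i) \setminus \{q_i^+\}$.

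The next step is to verify that the pair $(v, u)$ satisfies all the triggering hypotheses of \ref{ope:d0}, which would allow one more application and contradict its having terminated. The only nontrivial condition to check is $vu \neq q_{3-i} q_{3-i}^+$, but this is immediate: the head of $q_{3-i} q_{3-i}^+$ is $q_{3-i}^+$, whereas the head of $vu$ is $u$, and by our standing assumption $u \neq q_{3-i}^+$. Hence \ref{ope:d0} can be re-triggered on $(v, u)$, producing the desired contradiction and forcing $u \in \{q_1^+, q_2^+\}$.

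The one subtlety to keep in mind is that the starting vertices $q_1, q_2 \in C_p = \{a,b,c\}$ may coincide, but in any event $q_1^+$ and $q_2^+$ are well defined as the immediate successors along $Q_1$ and $Q_2$ respectively, and the argument above is unaffected by whether $q_1 = q_2$ or not. I do not anticipate any genuine obstacle, as the observation is essentially a compact reformulation of the stopping rule of \ref{ope:d0}; the only care required is bookkeeping of the heads of the arcs to rule out coincidence with $q_{3-i} q_{3-i}^+$.
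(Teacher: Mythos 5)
Your proof is correct and captures exactly the intended reasoning: the observation is the contrapositive of the stopping rule for \ref{ope:d0}, and the only genuinely verifiable point is that $u\notin\{q_1^+,q_2^+\}$ forces $vu\neq q_{3-i}q_{3-i}^+$ by comparing heads, which you handle cleanly. The paper states this as an observation with no written proof, and your argument spells out precisely why it follows from the termination condition of \ref{ope:d0}, using the identity $V_{Q_1,Q_2}=V_1(Q_1)\cup V_1(Q_2)$ as you note.
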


\begin{observation}\label{obs: one of q1q2 appear 1}
    As $\{Q_1,Q_2\}$ is an $(X,Y)_{B}$-critical path pair, we have at least one of $q_1^+,q_2^+$ will not appear in another path if $q_1^+\neq q_2^+$, where $Q_1=q_1q_1^+Q_1[q_1^+,Y], Q_2=q_2q_2^+Q_2[q_2^+,Y]$.
\end{observation}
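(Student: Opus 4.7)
The plan is to argue by contradiction: assume both $q_1^+\in V(Q_2)$ and $q_2^+\in V(Q_1)$ (in addition to $q_1^+\neq q_2^+$), and then construct a different arc-disjoint $(X,Y)$-path pair that violates the $(X,Y)_B$-criticality of $\{Q_1,Q_2\}$. Define
\[
Q_3 := q_1q_1^+\cdot Q_2[q_1^+,Y], \qquad Q_4 := q_2q_2^+\cdot Q_1[q_2^+,Y],
\]
i.e.\ keep the initial arc of each original path and continue along the tail of the other from the common inner vertex onward. Vertex-simplicity of $Q_3$ uses the $(X,Y)$-path constraint $V(Q_i)\cap X=\{q_i\}$ to exclude $q_1\in X$ from $V(Q_2[q_1^+,Y])$ (whether or not $q_1=q_2$), and symmetrically for $Q_4$. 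Arc-disjointness follows since $q_1^+\neq q_2^+$ makes the two initial arcs distinct, while the two tails come from arc-disjoint pieces of the originals $Q_1,Q_2$.

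A straightforward accounting gives
\[
A(Q_3)\cup A(Q_4)\;=\;\bigl(A(Q_1)\cup A(Q_2)\bigr)\setminus\bigl(A(Q_1[q_1^+,q_2^+])\cup A(Q_2[q_2^+,q_1^+])\bigr),
\]
so $A_{Q_3,Q_4}(B)\subseteq A_{Q_1,Q_2}(B)$. The heart of the proof is to exhibit one arc in $\bigl(A'(Q_1[q_1^+,q_2^+])\cup A'(Q_2[q_2^+,q_1^+])\bigr)\setminus B$, which forces strict inclusion and contradicts criticality.

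I would focus on the case $q_1^+\in V_1$ (the case $q_2^+\in V_1$ is symmetric). Since $q_1^+\notin X\cup Y$, it is an interior vertex of both $Q_1$ and $Q_2$, so four distinct arcs are incident to $q_1^+$ in $A'(Q_1)\cup A'(Q_2)$ (all of them lie in $A'$ because $V_1$ is independent, forcing both neighbours of $q_1^+$ on each path into $V_2$). Since $B$ is feasible, it contains at most two arcs at $q_1^+$, so at least two of these four arcs lie in $A_{Q_1,Q_2}(B)$, placing $q_1^+$ in $V_{Q_1,Q_2}(B)$. Criticality then forces $q_1^+\notin V_1(B)$, so no arc of $B$ is incident to $q_1^+$; in particular $q_1^+q_1^{++}\notin B$, and since $q_1^{++}\in V_2$ this arc lies in $A'(Q_1[q_1^+,q_2^+])\setminus B$, yielding the required strict removal.

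The main obstacle is the remaining case $q_1^+,q_2^+\in V_2$, where the removed subpaths may carry no $V_1$-$V_2$ arc and the swap fails to strictly shrink $A_{Q_1,Q_2}(B)$. Here my plan is to read the observation as a WLOG statement: the swap $\{Q_3,Q_4\}$ is itself $(X,Y)_B$-critical (because $V_{Q_3,Q_4}(B)\subseteq V_{Q_1,Q_2}(B)$ is still disjoint from $V_1(B)$, and we have equality $A_{Q_3,Q_4}(B)=A_{Q_1,Q_2}(B)$), yet by construction $q_1^+$ precedes $q_2^+$ in $Q_1$, so $q_1^+\notin V(Q_4)$, and symmetrically $q_2^+\notin V(Q_3)$. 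Thus the conclusion already holds for $\{Q_3,Q_4\}$, and we may simply replace $\{Q_1,Q_2\}$ by it. Verifying the two book-keeping identities (equality of the $A_{\cdot,\cdot}(B)$ sets and preservation of criticality) is routine in this subcase, and this replacement step is what makes the lemma true uniformly over all sub-configurations.
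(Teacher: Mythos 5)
The paper itself states this as an \emph{Observation} with no accompanying proof, so there is nothing to compare against directly; I judge your argument on its own terms.

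Your swap construction $Q_3 := q_1q_1^+\cdot Q_2[q_1^+,Y]$, $Q_4 := q_2q_2^+\cdot Q_1[q_2^+,Y]$ is the right idea, and the bookkeeping identity
\[
A(Q_3)\cup A(Q_4)=\bigl(A(Q_1)\cup A(Q_2)\bigr)\setminus\bigl(A(Q_1[q_1^+,q_2^+])\cup A(Q_2[q_2^+,q_1^+])\bigr)
\]
is correct: since $q_1^+\notin X$ (as $V(Q_1)\cap X=\{q_1\}$) and $q_2^+\notin X$, the vertex $q_2^+$ must occur in $Q_1$ strictly after $q_1^+$ and vice versa, so both deleted subpaths are non-empty and the six pieces on the right are pairwise arc-disjoint. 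Your treatment of the case $q_1^+\in V_1$ (symmetrically $q_2^+\in V_1$) is also correct: $q_1^+$ is interior to both paths, so four distinct arcs of $A'(Q_1)\cup A'(Q_2)$ meet it; feasibility of $B$ caps the $B$-arcs at $q_1^+$ at two, so $q_1^+\in V_{Q_1,Q_2}(B)$; criticality then gives $q_1^+\notin V_1(B)$, and the out-arc $q_1^+q_1^{++}\in A'(Q_1[q_1^+,q_2^+])\setminus B$ witnesses $A_{Q_3,Q_4}(B)\subsetneq A_{Q_1,Q_2}(B)$, contradicting criticality of $\{Q_1,Q_2\}$.

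Where you go astray is the residual case $q_1^+,q_2^+\in V_2$. Your ``WLOG replacement'' does not prove the stated claim for $\{Q_1,Q_2\}$; it replaces the pair by a different one, so it is a different assertion, and the literal observation could in principle fail for the original pair in this subcase. What you are missing is that this subcase is \emph{vacuous in the context where the observation is used}. The observation appears only in the $|C_p|=3$ subsection, where $X=C_p=\{a,b,c\}$ is the terminal strong component of $D[V_2]$ and $q_1,q_2\in X$. If $q_1^+\in V_2$, then $q_1q_1^+$ is an arc of $D[V_2]$ leaving $C_p$, which by the acyclic ordering would have to land back in $C_p=X$; but $q_1^+\notin X$ since $V(Q_1)\cap X=\{q_1\}$. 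Hence $q_1^+\in V_1$, and likewise $q_2^+\in V_1$. So your cases $q_1^+\in V_1$ / $q_2^+\in V_1$ already cover everything that can actually occur, and the argument you flagged as an obstacle never needs to be made. In short: your proof is essentially complete and correct once you observe that the troublesome subcase is excluded by the terminal-component structure, rather than trying to patch it with a re-choice that changes the statement.
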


\begin{observation}\label{obs:change initial}
    For $G=D[V_2]+B+A_{Q_1,Q_2}(B)$ with $Q_1=q_1q_1^+Q_1[q_1^+,Y], Q_2=q_2q_2^+Q_2[q_2^+,Y]$, let $B_\text{new}:=B\leftarrow\{q_3q_i^+,q_i^+q_i^{++}\}$ if $q_i^+\in V_1(B)$ and $B_\text{new}:=B$ if $q_i^+\notin V_1(B)$ for some $q_3\in X, q_3\neq q_i$, then $\{Q_3,Q_4\}$ is an $(X,Y)_{B_{\text{new}}}$-critical path pair, where $Q_3=q_3q_i^+Q_i[q_i^+,Y], Q_4=q_{3-i}q_{3-i}^+Q_{3-i}[q_{3-i}^+,Y]$. Because we just replace an initial vertex of some $Q_i,i\in [2]$, with another vertex in $X$. In the following context, when we use this observation, we may omit to define $B_\text{new}$ and $Q_3,Q_4$ for simplicity.

\end{observation}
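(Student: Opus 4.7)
The plan is to verify, in order, the three defining properties of an $(X,Y)_{B_\text{new}}$-critical path pair for $\{Q_3,Q_4\}$, where $Q_3=q_3q_i^+Q_i[q_i^+,Y]$ and $Q_4=Q_{3-i}$. First I would check that $Q_3$ and $Q_4$ are arc-disjoint $(X,Y)$-paths: $q_3\in X$ and $Q_3$ ends in $Y$, while $Q_4$ inherits its endpoints from $Q_{3-i}$. The only arc of $Q_3$ not already in $Q_i$ is the initial arc $q_3q_i^+$ (the rest is $Q_i[q_i^+,Y]$), so arc-disjointness with $Q_4=Q_{3-i}$ follows from arc-disjointness of the original pair, provided $q_3q_i^+\notin A(Q_{3-i})$; this is the implicit admissibility condition on the chosen $q_3$.

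Second, I would verify $V_{Q_3,Q_4}(B_\text{new})\cap V_1(B_\text{new})=\emptyset$. The $\leftarrow$ operation only redefines the pair of arcs incident to a vertex that is \emph{already} in $V_1(B)$, so $V_1(B_\text{new})=V_1(B)$ in every case. Then I would compare $A_{Q_3,Q_4}(B_\text{new})$ with $A_{Q_1,Q_2}(B)$ by splitting on three subcases: (i) $q_i^+\in V_2$, so $q_iq_i^+$ and $q_3q_i^+$ are not $V_1$--$V_2$ arcs and $B_\text{new}=B$, giving $A_{Q_3,Q_4}(B_\text{new})=A_{Q_1,Q_2}(B)$; (ii) $q_i^+\in V_1\setminus V_1(B)$, so $B_\text{new}=B$ and we merely swap the arc $q_iq_i^+$ out for $q_3q_i^+$, both of which are incident to $q_i^+$, so $V_{Q_3,Q_4}(B_\text{new})=V_{Q_1,Q_2}(B)$; (iii) $q_i^+\in V_1(B)$, in which case by $(X,Y)_B$-criticality of $\{Q_1,Q_2\}$ we have $q_iq_i^+,q_i^+q_i^{++}\in B$ already, and the $\leftarrow$ operation simply substitutes $q_3q_i^+$ for $q_iq_i^+$ in $B$, leaving $q_i^+q_i^{++}\in B_\text{new}$. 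In each subcase the equality $V_{Q_3,Q_4}(B_\text{new})=V_{Q_1,Q_2}(B)$ and the inclusion $V_1(B_\text{new})=V_1(B)$ transport the original emptiness hypothesis to the new setting.

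Third, and this is where the main obstacle sits, I need criticality: no arc-disjoint $(X,Y)$-path pair $\{Q_5,Q_6\}$ with $V_{Q_5,Q_6}(B_\text{new})\cap V_1(B_\text{new})=\emptyset$ satisfies $A_{Q_5,Q_6}(B_\text{new})\subsetneq A_{Q_3,Q_4}(B_\text{new})$. My plan is to reverse the substitution: given such a hypothetical $\{Q_5,Q_6\}$, replace any initial arc starting at $q_3$ with the corresponding one starting at $q_i$ (mirroring the construction that produced $Q_3$ from $Q_i$), obtaining a pair $\{Q_5',Q_6'\}$ with the same properties relative to $B$ rather than $B_\text{new}$. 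The bookkeeping from the second step shows that $A_{Q_5',Q_6'}(B)$ differs from $A_{Q_5,Q_6}(B_\text{new})$ in exactly the way $A_{Q_1,Q_2}(B)$ differs from $A_{Q_3,Q_4}(B_\text{new})$, so a strict inclusion downstairs lifts to a strict inclusion upstairs, contradicting the $(X,Y)_B$-criticality of $\{Q_1,Q_2\}$. The delicate point is case (iii), where $B$ is genuinely modified; here one must check that the replacement arcs $q_iq_i^+$ and $q_3q_i^+$ always exist and always land on the same side of the $A$--$B$ partition, but this is exactly what the definition of $\leftarrow$ guarantees, so the argument closes cleanly.
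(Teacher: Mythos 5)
The paper offers essentially no proof of this observation — only the one-line remark ``Because we just replace an initial vertex of some $Q_i$ with another vertex in $X$'' — so your proposal is necessarily an expansion rather than a reproduction. That said, your overall plan (arc-disjointness, then the vertex-intersection property via the three subcases on where $q_i^+$ lives, then criticality by reversing the substitution) is the right one, and the subcase analysis you give for the effect on $A_{Q_1,Q_2}(B)$ versus $A_{Q_3,Q_4}(B_{\text{new}})$ is correct. You are also right to flag, in the first step, the implicit admissibility requirement that $q_3q_i^+\notin A(Q_{3-i})$: the paper never states it, but without it arc-disjointness of $\{Q_3,Q_4\}$ fails, and indeed at each point of use in the paper (e.g.\ in \ref{ope:d2}) this is silently guaranteed by the structure ($q_3\in X$ forces $q_3$ to be an initial vertex, and the initial vertices are distinct).

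Two small inaccuracies worth fixing. First, in your statement of the criticality condition you write ``no arc-disjoint $(X,Y)$-path pair $\{Q_5,Q_6\}$ \emph{with $V_{Q_5,Q_6}(B_{\text{new}})\cap V_1(B_{\text{new}})=\emptyset$} satisfies $A_{Q_5,Q_6}(B_{\text{new}})\subsetneq A_{Q_3,Q_4}(B_{\text{new}})$.'' That italicized clause is not part of Definition~\ref{def:critical}: the competing pair is only required to be arc-disjoint. You would therefore be proving a weaker statement than is claimed. Fortunately your reverse-substitution argument does not actually use that extra hypothesis, so the fix is just to drop it. Second, ``replace any initial arc starting at $q_3$'' is imprecise: if $q_3$ starts one of $Q_5,Q_6$ with first arc $q_3z$ for some $z\neq q_i^+$, replacing $q_3z$ by $q_iz$ is not the right move and would introduce a new arc $q_iz$ whose membership in $B$ is unknown. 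What you want is: if the specific arc $q_3q_i^+$ appears in $\{Q_5,Q_6\}$ (and by the $(X,Y)$-path property it can only appear as a first arc), replace it by $q_iq_i^+$; if it does not appear, no modification is needed and one checks directly that $A_{Q_5,Q_6}(B)=A_{Q_5,Q_6}(B_{\text{new}})$ since neither $q_3q_i^+$ nor $q_iq_i^+$ lies in $A'(Q_5)\cup A'(Q_6)$. With that clarification, the lifting of strict inclusion goes through in all three of your subcases.
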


If \ref{(beta3)} still occurs after \ref{ope:d0}, we enter one of the following procedures for different conditions.


\begin{enumerate}
	\renewcommand{\theenumi}{\textbf{(D\arabic{enumi})}}
    \item\label{ope:d1} The initials of $Q_1,Q_2$ are same, says $a$, and $(N_D^+(b)\cup N_D^+(c))\cap V_{Q_1, Q_2}=\emptyset$:

    We may assume $Q_1=aa_1a_1^+Q_1[a_1^+,Y], Q_2=aa_2a_2^+Q_2[a_2^+,Y]$. By observation~\ref{obs: one of q1q2 appear 1}, there exists $i\in [2]$, such that $a_i\notin V_1(Q_{3-i})$, we move $aa_i,a_ia_i^+$ to $B$ if they belong to $A_{Q_1,Q_2}(B)$. 
    
    \begin{enumerate}
        \item When $ba\in D$, as \ref{(beta3)} occurs, we have $cb\notin D[C_p]$ and $d_G^+(c)=1$, $ca$ is the cut-arc. There is another out-neighbor of $c$ in $V_1$, which says $u$. $B_\text{new}:=B\leftarrow\{cu,uu^+\}$. We can find an $(X,Y)_{B_{\text{new}}}$-critical path pair $\{Q_3,Q_4\}$ such that $A_{Q_3,Q_4}(B_\text{new})\subseteq A_{Q_1,Q_2}(B_\text{new})$. Let $G_\text{new} := D[V_2]+B_\text{new}+A_{Q_3,Q_4}(B_\text{new})$. This avoids \ref{(beta3)} as $\{aa_i,aa_i^+,cu,uu^+\}\subseteq B_\text{new}$.

        \item When $ba\notin D$:
		
		If for any $u\in N^+_D(b)\cap V_1$, $N_D^+(u)=\{b,c\}$, then there exist two distinct vertices $u_1,u_2\in V_1$, such that $bu_1, u_1c, cu_2,u_2u_2^+\in D$, where $u_2^+\in V_2\setminus\{b,c\}$ as there is another $(\{b,c\},V_2\backslash\{b,c\})$-path, besides the arc $ca$. Let $B_\text{new}:=(B\leftarrow\{bu_1, u_1c\})\leftarrow\{cu_2,u_2u_2^+\}$. We can find an $(X,Y)_{B_{\text{new}}}$-critical path pair $\{Q_3,Q_4\}$ such that $A_{Q_3,Q_4}(B_\text{new})\subseteq A_{Q_1,Q_2}(B_\text{new})$. Let $G_\text{new} := D[V_2]+B_\text{new}+A_{Q_3,Q_4}(B_\text{new})$. As $\{aa_i,aa_i^+,bu_1,u_1c,cu_2,u_2u_2^+\}\subseteq B_\text{new}$, we avoid \ref{(beta3)}.
		
		If not, then there exists a vertex $u\in N^+_D(b)\cap V_1$, and it has an out-neighbor $u^+\in V_2\setminus\{b,c\}$. let $B_\text{new}:=B\leftarrow\{bu,uu^+\}$. We can find an $(X,Y)_{B_{\text{new}}}$-critical path pair $\{Q_3,Q_4\}$ such that $A_{Q_3,Q_4}(B_\text{new})\subseteq A_{Q_1,Q_2}(B_\text{new})$. Let $G_\text{new} := D[V_2]+B_\text{new}+A_{Q_3,Q_4}(B_\text{new})$. And if \ref{(beta3)} still occurs for $G_\text{new}$, then we have $cb\notin D$, and $N_{G_\text{new}}^+(c)=1$. We regard $G_\text{new}$ as $G$, $b$ as $x_1$, $c$ as $x_2$, $u$ as $t$ and enter the procedures of \ref{ope:a1}-\ref{ope:a8} to get the new $G_\text{new}$ or enter an counterexample. \\
        Initially, we move $\{aa_i,aa_i^+\}\subseteq B$. As $a_i\notin N_D^+(b)\cup N_D^+(c)$, we have never removed these two arcs from $B$ when we enter \ref{ope:a1}-\ref{ope:a7}. So, we can avoid \ref{(beta3)} now as there are two arc-disjoint $(\{b\},Y)$-paths and $d_{G_\text{new}}^+(c)\geq 2$. Note that if we enter \ref{ope:a6} or \ref{ope:a7}, then we define optional vertex set $R:=\{u\}$. 
    \end{enumerate}

    \item\label{ope:d2} The initials of $Q_1,Q_2$ are same, says $a$, and $(N^+(b)\cup N^+(c)\cap V_{Q_1, Q_2}\neq\emptyset$:

    We may assume that $Q_1=aa_1Q_1[a_1,Y], Q_2=aa_2Q_2[a_2,Y]$. As what we did in \ref{ope:d0}, we have $(N^+(b)\cup N^+(c)\cap V_{Q_1, Q_2}\in\{a_1,a_2\}$. If there exists $i\in [2]$, such that $a_i$ is an out-neighbor of $b$, then we replace $aa_i$ in $G$ with $ba_i$ to get $G_\text{new}$. If not, then there exists $i\in [2]$, such that $a_i$ is an out-neighbor of $c$, we replace $aa_i$ in $G$ with $ca_i$ to get $G_\text{new}$.

    Now the initials of $Q_3,Q_4$ are different and $\{Q_3,Q_4\}$ is still $(X,Y)_{B_\text{new}}$-critical path pair by Observation~\ref{obs:change initial}, and if \ref{(beta3)} still occurs for $G_\text{new}$, we rename $G_\text{new}$ as $G$, $B_\text{new}$ as $B$, $\{Q_3,Q_4\}$ as $\{Q_1,Q_2\}$ and do one of the following operations.
    

    \textbf{In the following cases, the initials of $Q_1,Q_2$ are different, we may assume $Q_1=aa_1a_1^+Q_1[a_1^+,Y]$, $Q_2=bb_1b_1^+Q_2[b_1^+,Y]$ by relabeling the name of $a,b,c$. Besides, as \ref{(beta3)} occurs for $G$, we have $d^+_{G_\text{new}}(c)=1,cb\notin D$. And as what we did in \ref{ope:d0}, if there is an out-neighbor $u$ of some vertex in $\{a,b,c\}$, such that $u\in V_{Q_1, Q_2}$, then it must be $a_1$ or $b_1$.}
	
    \item\label{ope:d3} When there exists $u\in N_D^+(c)\cap V_1$, such that $u\notin V_{Q_1, Q_2}$:
    
    Let $B_\text{new}:=B\leftarrow\{cu,uu^+\}$, we can find an $(X,Y)_{B_{\text{new}}}$-critical path pair $\{Q_3,Q_4\}$ such that $A_{Q_3,Q_4}(B_\text{new})\subseteq A_{Q_1,Q_2}(B_\text{new})$. Let $G_\text{new} := D[V_2]+B_\text{new}+A_{Q_3,Q_4}(B_\text{new})$.
    
    If \ref{(beta3)} still occurs for $G_\text{new}$, then we have $A_{Q_3,Q_4}(B_\text{new})\subsetneq A_{Q_1,Q_2}(B_\text{new})$, which means $|A_{Q_3,Q_4}(B_\text{new})|< |A_{Q_1,Q_2}(B)|$. Then we rename $G_\text{new}$ as $G$, $B_\text{new}$ as $B$, $\{Q_3,Q_4\}$ as $\{Q_1,Q_2\}$ and repeat the whole process from \ref{ope:d0}.


	
    \item\label{ope:d4} When $N_D^+(c)\cap V_1\subseteq V_{Q_1, Q_2}$, and $a_1=b_1$:
	
	As what we did in \ref{ope:d0}, we have $N_D^+(c)\cap V_1=\{a_1\}$.
	
	If there exists $x\in \{a,b\}$, such that there exists $u\in N_D^+(x)\cap V_1,u\neq a_1$, then as what we do in \ref{ope:d0}, we have $u\notin V_{Q_1, Q_2}$. We replace $xa_1$ with $ca_1$ in $G$ to obtain $G_\text{new}$, and if \ref{(beta3)} still occurs, then we relabel the name of $a,b,c$ and enter \ref{ope:d3}.
	
	If $N_D^+(a)\cap V_1=N_D^+(b)\cap V_1=N_D^+(c)\cap V_1=\{a_1\}$, then we add additional arcs $ab,bc,ca$ to $D[V_2]$ to get $G_\text{new}$.

    \item\label{ope:d5} When $N_D^+(c)\cap V_1\subseteq V_{Q_1, Q_2}$, and $a_1\neq b_1$:
    
    If $N_D^+(c)\cap V_1=\{a_1\}$, we replace $aa_1$ in $G$ with $ca_1$ to get $G_\text{new}$ by Observation~\ref{obs:change initial}, if \ref{(beta3)} still occurs for $G_\text{new}$, we rename $G_\text{new}$ as $G$, $B_\text{new}$ as $B$, $\{Q_3,Q_4\}$ as $\{Q_1,Q_2\}$, and we relabel $a,b,c$ to fit the conditions, then we enter \ref{ope:d3} or enter \ref{ope:d5} with $b_1\in N_D^+(c)$. Thus, We can always assume $b_1\in N_D^+(c)$.

    If $ba\in D$, then we replace $bb_1$ with $cb_1$ in $G$ to get $G_\text{new}$ by Observation~\ref{obs:change initial}, which avoid \ref{(beta3)}, so we may assume $ba\notin D$. 

    If there exists $u\in N_D^+(b)\cap V_1,u\notin \{a_1,b_1\}$, we replace $bb_1$ in $G$ with $cb_1$ to get $G_\text{new}$ by Observation~\ref{obs:change initial}, if \ref{(beta3)} still occurs for $G_\text{new}$, we rename $G_\text{new}$ as $G$, $B_\text{new}$ as $B$, $\{Q_3,Q_4\}$ as $\{Q_1,Q_2\}$, and we relabel $a,b,c$ to fit the conditions, so we enter \ref{ope:d3} by Observation \ref{obs: out-vertex q1q2}. 

    Now, we consider the following two cases with $ba\notin D$, $N_D^+(b)\cap V_1\subseteq\{a_1,b_1\}$.
    
    \begin{enumerate}
        \item\label{ope:d5a} When $N_D^+(c)\cap V_1=\{b_1\}$:
     
    	
    	
        If $N_D^+(b)\cap V_1=\{b_1\}$, then we regard $b$ as $x_1$, $c$ as $x_2$, $b_1$ as $t$, and same arguments as those in \ref{ope:a6}, \ref{ope:a7} and \ref{ope:a8} as $b_1\notin V(Q_1)$ by \ref{ope:d0}. And if it is the case like \ref{ope:a6}, \ref{ope:a7}, then we define optional vertex set $R:=\{b_1\}$
    	
        If $N_D^+(b)\cap V_1=\{a_1,b_1\}$, and if $ac\in D$, then we replace $bb_1$ with $cb_1$, $aa_1$ with $ba_1$ in $G$ to get $G_\text{new}$ by Observation~\ref{obs:change initial}, which avoid \ref{(beta3)}, so we may assume $ac\notin D$.
    	
        \begin{enumerate}
            \item When there exists $u\in N_D^+(a)\cap V_1,u\notin \{a_1,b_1\}$, we replace $bb_1$ with $cb_1$, $aa_1$ with $ba_1$ in $G$ to get $G_\text{new}$ by Observation~\ref{obs:change initial}, and we relabel $a,b,c$ to fit the conditions and enter \ref{ope:d3} by Observation \ref{obs: out-vertex q1q2}.
    		
            \item\label{ope:d5(a)ii} When $N_D^+(a)\cap V_1\subseteq\{a_1,b_1\}$. If it is in the case we illustrate in Lemma~\ref{lem:counter example2}, then $D$ has no strong arc decomposition.
            
            If not, as \ref{ope:b6}, we define $R\subseteq\{a_1,b_1\}$, such that if $a_1$ satisfies $N_D^+(a_1)\neq\{a_1^+,a\}$ or $N_D^-(a_1)\neq\{a,b\}$, then $a_1\in R$ and if $b_1$ satisfies $N_D^+(b_1)\neq\{b_1^+,b\}$ or $N_D^-(b_1)\neq\{b,c\}$, then $b_1\in R$, we call $R$ optional vertex set. Note that $R\neq \emptyset$, otherwise it is in the case we illustrate in Lemma~\ref{lem:counter example2}.
      

            We choose a vertex in $R$, and do the following operation to avoid \ref{(beta3)}. Note that the choice of vertex depends on another side which is related to \ref{(beta2)} and \ref{(beta4)}
            
            As what we do in \ref{ope:d0}, we have $a_1\notin V(Q_2), b_1\notin V(Q_1)$. 	
            
            If we choose $b_1$, let $B\leftarrow\{aa_1,a_1a_1^+\}$, then we regard $b$ as $x_1$, $c$ as $x_2$, $b_1$ as $t$, and same arguments as those in \ref{ope:a6} and \ref{ope:a7}. If we choose $a_1$, let $B\leftarrow\{cb_1,b_1b_1^+\}$, then we regard $a$ as $x_1$, $b$ as $x_2$, $a_1$ as $t$, and same operations as those in \ref{ope:a6} and \ref{ope:a7}. 
    		

        \end{enumerate}
    	
        \item When $N_D^+(c)\cap V_1=\{a_1,b_1\}$:
        
        If $ac\in D$, then we replace $aa_1$ with $ca_1$ in $G$ to get $G_\text{new}$ by Observation~\ref{obs:change initial}, which avoid \ref{(beta3)}, so we may assume $ac\notin D$.

        \begin{enumerate}
            \item If there exists $u\in N_D^+(a)\cap V_1,u\notin \{a_1,b_1\}$, we replace $aa_1$ with $ca_1$ in $G$ to get $G_\text{new}$ by Observation~\ref{obs:change initial}, if \ref{(beta3)} still occurs for $G_\text{new}$, we rename $G_\text{new}$ as $G$, $B_\text{new}$ as $B$, $\{Q_3,Q_4\}$ as $\{Q_1,Q_2\}$, and we relabel $a,b,c$ to fit the conditions, so we enter \ref{ope:d3} by Observation \ref{obs: out-vertex q1q2}. 

            \item If $N_D^+(a)\cap V_1=\{a_1\}$, then we replace $aa_1$ in $D$ with $ca_1$ to get $G_\text{new}$ by Observation~\ref{obs:change initial}, if \ref{(beta3)} still occurs for $G_\text{new}$, we rename $G_\text{new}$ as $G$, $B_\text{new}$ as $B$, $\{Q_3,Q_4\}$ as $\{Q_1,Q_2\}$, and we relabel $a,b,c$ to fit the conditions, then enter \ref{ope:d5a}.

            \item If $N_D^+(a)\cap V_1=\{a_1,b_1\}$. As this is not the case we illustrate in Lemma~\ref{lem:counter example2}, then we do similar operations to those in \ref{ope:d5(a)ii}, which is:
        
            As \ref{ope:b6}, we define $R\subseteq\{a_1,b_1\}$, such that $b_1\in R$ and if $a_1$ satisfies $N_D^+(a_1)\neq\{a_1^+,c\}$ or $N_D^-(a_1)\neq\{a,c\}$, then $a_1\in R$. We call $R$ optional vertex set. 

            We choose a vertex in $R$, and do the following operation to avoid \ref{(beta3)}. Note that the choice of vertex depends on another side which is related to \ref{(beta2)} and \ref{(beta4)}
            
            As what we do in \ref{ope:d0}, we have $a_1\notin V(Q_2), b_1\notin V(Q_1)$. 	
            
            If we choose $b_1$, let $B\leftarrow\{aa_1,a_1a_1^+\}$, then we regard $b$ as $x_1$, $c$ as $x_2$, $b_1$ as $t$, and same arguments as those in \ref{ope:a6} and \ref{ope:a7}. If we choose $a_1$, replace $aa_1$ with $ca_1$ in $G$, let $B\leftarrow\{bb_1,b_1b_1^+\}$, then we regard $c$ as $x_1$, $a$ as $x_2$, $a_1$ as $t$, and do the same operations as those in \ref{ope:a6} and \ref{ope:a7}. 
        \end{enumerate}
    \end{enumerate}
\end{enumerate}

\begin{remark}
If a loop occurs, it will involve passing through \ref{ope:d3}. Since we can only enter  \ref{ope:d3} a finite number of times, there will be no infinite loop. This is ensured because each entry into \ref{ope:d3} decreases $|A_{Q_3,Q_4}(B_\text{new})|$ compared to $|A_{Q_1,Q_2}(B)|$ if it is not the final entry. Specifically, $|A_{Q_1,Q_2}(B)|$ does not increase at any step in this paper.

\end{remark}

\subsubsection{$|C_p|\geq 4$}

If \ref{(beta1)} or \ref{(beta3)} occurs, then it can only be \ref{(beta3)}. In this case, $X=U_\ell$. Like Lemma~\ref{lem:cut-arc}, if there is cut-arc $xy\in A(D[C_p])$, then there is no path in form $x\rightarrow X\stackrel{Q_j}{\longrightarrow} Y\rightarrow y$ in $G\setminus xy$, where $xy\notin Q_j$ for some $j\in[2]$. Observe that there is always a path $Y\rightarrow y$ as $Y\subseteq C_1,y\in C_p$, so by~Lemma~\ref{lem:cut-arc}, we have $X=U_{\ell}=\{x_1\},$ $U_{\ell-1}=\{y_1\}=\{x_2\}=\{x\}$, $d^+_{\bar{G}}(x_2)=1$, then enter \ref{ope:a1}-\ref{ope:a8}.

\subsubsection{Completion of the proof}

We may use \ref{ope:b1}*-\ref{ope:d5}* 
to denote the symmetric procedures of \ref{ope:b1}-\ref{ope:d5} if \ref{(beta2)} or \ref{(beta4)} occurs, and denote by $u^*$ the corresponding vertex of $u$ ($u$ is an arbitrary vertex). If $D$ has the structure as we illustrated in Theorem~\ref{lem:counter example} or Theorem~\ref{lem:counter example2}, then $D$ has no strong arc decomposition. If $D$ has no such structure, then we do the following:

Like what we do when $D[V_2]$ is strong but not 2-arc-strong, we can finally get a 2-arc-strong $\bar{G}_\text{new}$ by repeating \ref{ope:a1}-\ref{ope:d5} and \ref{ope:a1}*-\ref{ope:d5}*. The process can come to an end by the following claim.

\begin{claim}\label{clm:end2}
    The process will eventually terminate.
\end{claim}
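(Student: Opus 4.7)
The plan is to exhibit a monotonically nonincreasing nonnegative integer potential and argue that it cannot stay constant through an infinite run of operations. Concretely, I would take the potential to be $|A_{Q_1,Q_2}(B)|$. By the remark immediately preceding this claim together with Remark~\ref{rmk:end procedure} and its analogs, $|A_{Q_1,Q_2}(B)|$ never increases under any single application of \ref{ope:b1}-\ref{ope:d5} or their symmetric counterparts. Since the potential is a nonnegative integer bounded above by $|A(V_1,V_2)|$, any run of strictly decreasing steps is finite, and it suffices to bound the length of a run that preserves the potential.

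For steps that preserve $|A_{Q_1,Q_2}(B)|$, I would classify them into three mutually exclusive types. First, steps that terminate the process, either by producing a 2-arc-strong $\bar{G}_\text{new}$ (to which we apply Theorem~\ref{thm:multi semi SAD}) or by revealing a forbidden structure from Lemma~\ref{lem:counter example} or Lemma~\ref{lem:counter example2}. Second, steps that reduce the situation to the case where $D[V_2]$ is strong, thereby invoking \ref{ope:a1}-\ref{ope:a8}; those invocations terminate by Claim~\ref{clm:end}. Third, steps that add auxiliary arcs inside $D[C_p]$ or the analogous region for $|C_p|\in\{1,2,3\}$ (such as the parallel-arc additions in \ref{ope:b4}-\ref{ope:b6}, \ref{ope:d4}, and parts of \ref{ope:d5}); these permanently destroy the exceptional configuration on the side where they act, since the newly added arcs cannot be removed by any subsequent procedure.

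The main obstacle is controlling the alternating interaction between procedures avoiding \ref{(beta1)} or \ref{(beta3)} on $C_p$ and those avoiding \ref{(beta2)} or \ref{(beta4)} on $C_1$: a priori, fixing one side could re-trigger the exceptional case on the other side indefinitely. The decisive observation is that $C_p$ and $C_1$ are vertex-disjoint, so any arc-adding step on one side remains in force through all later symmetric steps; hence at most one such step can occur on each side. The only remaining mechanism for a loop is \ref{ope:d3} (and its mirror \ref{ope:d3}*), and the remark after \ref{ope:d5} ensures that every non-terminal invocation of \ref{ope:d3} satisfies $A_{Q_3,Q_4}(B_\text{new})\subsetneq A_{Q_1,Q_2}(B)$, so the potential strictly drops. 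Combining the bound of at most one arc-adding step per side, the strict drop of $|A_{Q_1,Q_2}(B)|$ in every non-terminal \ref{ope:d3} call, and the termination guarantee of Claim~\ref{clm:end} for recursive invocations of \ref{ope:a1}-\ref{ope:a8}, one obtains a finite upper bound on the total number of operations, finishing the proof.
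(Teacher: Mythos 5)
Your proposal starts from the same potential $|A_{Q_1,Q_2}(B)|$ as the paper and correctly notes it is nonincreasing (via Remark~\ref{rmk:end procedure} and its analogues), so the real work is bounding a potential-preserving run. Your observations that arc-adding steps persist (hence at most one per side) and that non-terminal \ref{ope:d3} calls strictly drop the potential are both sound. However, there is a genuine gap in the way you close the argument.

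The gap is the claim that, after excluding arc-adding steps, ``the only remaining mechanism for a loop is \ref{ope:d3}.'' This is not true for the kind of loop that Claim~\ref{clm:end2} must rule out, namely the alternation where fixing \ref{(beta1)}/\ref{(beta3)} on the $C_p$ side re-triggers \ref{(beta2)}/\ref{(beta4)} on the $C_1$ side, and vice versa, indefinitely. That alternation can in principle pass only through arc-free, non-\ref{ope:d3} procedures such as \ref{ope:b1}, \ref{ope:b2}, \ref{ope:c1}, \ref{ope:c2}, \ref{ope:d1}, \ref{ope:d2} and their starred versions, each of which merely rebuilds $B$ or reroutes the critical path pair. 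The remark preceding Claim~\ref{clm:end2} concerning \ref{ope:d3} governs loops internal to the $|C_p|=3$ analysis (repeated returns to \ref{ope:d0}), not the cross-side alternation. Your reduction via Claim~\ref{clm:end} also doesn't plug the hole: Claim~\ref{clm:end} is proved for the strong $D[V_2]$ case, and while the $(A1)$--$(A8)$ logic is reused as a subroutine in, e.g., \ref{ope:d1}(b) and \ref{ope:d5a}, Claim~\ref{clm:end2} cannot simply appeal to it wholesale for the non-strong setting.

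The decisive step that the paper supplies, and that your argument is missing, is a degree-monotonicity argument on the opposite side. When the potential is exactly preserved, $|A_{Q_3,Q_4}(B_\text{new})|=|A_{Q_1,Q_2}(B)|$, and one shows (by the same reasoning as in Remark~\ref{rmk:end procedure}) that the operations used to fix the $C_p$ side do not decrease $d^-_{\bar G_\text{new}}(v)$ for any vertex $v$ in $C_1$ (nor, when $|C_1|=3$, create a new arc with both ends in $C_1$). Since the occurrence of \ref{(beta2)} or \ref{(beta4)} is witnessed by some in-degree dropping to $1$ on the $C_1$ side, a potential-preserving step that had no such witness before cannot produce one afterwards. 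Hence the potential-preserving alternation must stop; combined with the bounded potential, the whole process terminates. Without this degree argument, your bound on arc-adding steps and \ref{ope:d3} calls does not suffice to exclude the cross-side loop.
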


\begin{proof}
    If the process does not terminate, the following event will occur infinitely often: \ref{(beta1)} or \ref{(beta3)} occurs for $\bar{G}$ and \ref{(beta2)} or \ref{(beta4)} does not occur for $\bar{G}$. However, after \ref{ope:a1}-\ref{ope:d5}, \ref{(beta2)} or \ref{(beta4)} occurs for $\bar{G}_\text{new}$, that means in an infinite number of the repetitions of this event, we have $|A_{Q_3,Q_4}(B_\text{new})|=|A_{Q_1,Q_2}(B)|$, because the original $|A_{Q_1,Q_2}(B)|$ is bounded.
    However, this is impossible under the following two cases:
    \begin{enumerate}
        \item When $|C_1|=1,2$ or $|C_1|\geq 4$, if \ref{(beta2)} or \ref{(beta4)} occurs, then $d_{\bar{G}_\text{new}}^-(x)=1$ for some $x\in C_1$ or $\{x\}=C_2,|C_1|=1$. And when $|A_{Q_3,Q_4}(B_\text{new})|=|A_{Q_1,Q_2}(B)|$, we have that the in-degree of $x$ will not decrease. 

        \item When $|C_1|=\{a^*,b^*,c^*\}$, if \ref{(beta4)} does not occur for $\bar{G}$ and $|A_{Q_3,Q_4}(B_\text{new})|=|A_{Q_1,Q_2}(B)|$, we can deduce that $d_{\bar{G}_\text{new}}^-(x)\geq 2$ for all $x\in C_1$, in addition, if \ref{(beta4)} occurs for $\bar{G}_\text{new}$ then we can deduce that the terminals of $Q_3$ and $Q_4$ are same, say $a^*$, and there is no arc from $V_2\backslash C_1$ to $b^*,c^*$. When $|A_{Q_3,Q_4}(B_\text{new})|=|A_{Q_1,Q_2}(B)|$, entering \ref{ope:a1}-\ref{ope:d5} will neither decrease the in-degree for all $x\in C_1$, nor produce a new arc with both side in $C_1$, we can deduce that \ref{(beta4)} occurs for $\bar{G}$, which is a contradiction.
    \end{enumerate}
      
    Thus, we can finally obtain a 2-arc-strong $\bar{G}_\text{new}$ in a finite number of steps since the original $|A_{Q_1,Q_2}|$ is bounded.
\end{proof}

When we encounter the optional vertex set $R$ or $R^*$, we obey the following rules: 

Without loss of generality, if we encounter $R$ first, we randomly choose a vertex $x$ in $R$. If we also encounter $R^*$ later, we choose the same vertex $x$ for $R^*$ when $x\in R^*$; otherwise, we randomly choose a vertex in $R^*$. Note that once we handle $R$ ($R^*$), \ref{(beta1)} (\ref{(beta2)}) and \ref{(beta3)} (\ref{(beta4)}) will not occur again in the remaining process. This follows from the proof of Claim~\ref{clm:end}. 

If we have never added additional arcs to $D[V_2]$, then $G_\text{new}$ is a subdigraph of $D$, and for $|V_2|\geq 5$, $\bar{G}_\text{new}$ has a strong arc decomposition by Theorem~\ref{thm:multi semi SAD}, and so $D$ has a strong arc decomposition by Lemma~\ref{lem:critical AC to SAD}.

If we add additional arcs to $D[V_2]$, then by the following lemma and Lemma~\ref{lem:pending decom}, $D$ has a strong arc decomposition.

\begin{lemma}\label{lem: gnew has pending2}
	For $D$ with $|V_2| \geq 4$ and $D[V_2]$ is not strong. If additional arcs have been added to $G_\text{new}$, then we can find a pending decomposition of $D$.
\end{lemma}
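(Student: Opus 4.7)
The plan is to mirror, case-by-case, the argument used to prove Lemma~\ref{lem: gnew has pending}. Because every procedure that inserts additional arcs does so by creating a pair of parallel arcs (either two copies of $ba$ together with $ab$, or the three arcs $ab,bc,ca$ of a triangle in $C_p$), the multi-digraph $\bar G_\text{new}$ avoids all exceptional configurations of Figure~\ref{fig-MD-exceptional}. Thus Theorem~\ref{thm:multi semi SAD} yields a strong arc decomposition $\bar G_1,\bar G_2$ of $\bar G_\text{new}$, and lifting all splitting arcs produces subdigraphs $G_1,G_2$ of $D$ augmented by the inserted arcs; by Remark~\ref{rmk:critical is pending} these form a pending decomposition of the augmented digraph. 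The real work is to reassign arcs incident to the relevant ``pivot'' vertices so that, after we delete the inserted arcs, both $G_1$ and $G_2$ remain strong.

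I would enumerate the procedures in which additional arcs are added. For $|C_p|=1$ these are~\ref{ope:b4},~\ref{ope:b5},~\ref{ope:b6}; for $|C_p|=3$ they are the A6/A7-type branches inside~\ref{ope:d1}(b),~\ref{ope:d4},~\ref{ope:d5a} and~\ref{ope:d5}; and for $|C_p|\geq 4$ the situation already collapses to the strong-but-not-2-arc-strong setting, so Lemma~\ref{lem: gnew has pending} applies verbatim. In each remaining case the added arcs are supplied precisely to mimic the configuration of~\ref{ope:a6}--\ref{ope:a7}: the roles of $x_1,x_2,t,t^+$ are played by $a,b,b_i,b_i^+$ (in~\ref{ope:b4}--\ref{ope:b6}), by the chosen pivot from the optional set $R$ together with the two corresponding vertices of $C_p$ (in~\ref{ope:d5a} and~\ref{ope:d5}(b)), or the three added arcs of~\ref{ope:d4} let one simply keep one 2-cycle $ab\to ba$ in each of $G_1,G_2$. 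With these identifications the reallocation rules~\ref{choose (1)}--\ref{choose (4)} and the explicit cycle constructions of Case~1.1 and Case~1.2 in the proof of Lemma~\ref{lem: gnew has pending} transfer verbatim, using only that $D$ is 2-arc-strong and $|V_2|\geq 4$ to guarantee the auxiliary $(y_2,w)$- and $(t^+,y_2)$-paths needed there.

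When the symmetric procedures~\ref{ope:b1}*--\ref{ope:d5}* also fire, the optional sets $R$ and $R^*$ meet and we copy Case~2 of Lemma~\ref{lem: gnew has pending}: if the two pivots coincide we extend the $2$-cycle construction simultaneously on both the $C_p$-side and the $C_1$-side (the analogue of Case~2.1), whereas if the pivots are distinct we apply the $C_p$-side reallocation first and then the $C_1$-side reallocation, observing that the two sides touch disjoint arcs (the analogue of Case~2.2). The selection rule stated just before the lemma, requiring that once $R$ is used the conditions~\ref{(beta1)} and~\ref{(beta3)} never re-arise, ensures that no later procedure overwrites the arcs we have just committed.

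The main obstacle is the bookkeeping analogous to Remark~\ref{rmk:legal}: we must verify in every case that, at the moment the reallocation is performed, (i) $G_1$ and $G_2$ are strong, (ii) the out-arcs (resp.\ in-arcs) of $\{a,b\}$ (resp.\ of the terminal pair on the $C_1$-side) that lie in $A(G_1)\cup A(G_2)$ are exactly the two expected ones, and (iii) the pivot vertex has its two incident arcs pinned by $B_\text{new}$, so that a fresh in- and out-arc of the pivot can be drawn from $D$ to rebuild strong connectivity after deleting the inserted arcs. Condition~(ii) is where the case analysis gets delicate, because in the $|C_p|=3$ branch the inserted triangle interacts with the $3$-cycle $abca$ already present in $D[C_p]$; here one must use that $N_D^+(a)\cap V_1\subseteq\{a_1,b_1\}$ and the analogous constraints that held at the moment we entered the relevant sub-case of~\ref{ope:d5}, together with the hypothesis that $D$ does not contain either of the forbidden structures of Lemma~\ref{lem:counter example} or Lemma~\ref{lem:counter example2}, to guarantee a fresh arc is available. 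Once condition~(ii) is checked, the strongness of the reallocated $G_1,G_2$ follows by exhibiting explicit $(x_1,x_2)$- and $(x_2,x_1)$-paths exactly as in Case~1.1 and Case~1.2 of Lemma~\ref{lem: gnew has pending}.
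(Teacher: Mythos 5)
Your high-level plan is right: use the inserted parallel arcs to dodge the exceptional multidigraphs of Theorem~\ref{thm:multi semi SAD}, lift to get a pending decomposition of the augmented digraph via Remark~\ref{rmk:critical is pending}, and then carefully reallocate arcs incident to the pivot vertices before deleting the inserted arcs. You also correctly track the $S$/$S^*$ distinction and the requirement that the pivots chosen from $R$ and $R^*$ be coordinated, which mirrors the paper's Case~4. However, there is a genuine gap: you assert that the reallocation of Case~1.1/Case~1.2 ``transfers verbatim'' to all the $|C_p|\le 3$ branches, with the roles of $x_1,x_2,t,t^+$ simply relabeled. That works for \ref{ope:b4}, \ref{ope:b6}, \ref{ope:d1}(b), \ref{ope:d5a} and \ref{ope:d5}(b), where the procedure literally invokes ``same arguments as those in \ref{ope:a6} and \ref{ope:a7}'' with a single designated pivot $t$. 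It does \emph{not} work for \ref{ope:b5} or \ref{ope:d4}, and the paper treats them as separate cases (3.2 and 3.3) with genuinely new constructions.

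In \ref{ope:b5} there is no single pivot: both $b_1$ and $b_2$ are pinned, the decomposition is seeded with $ab_1,b_1b_1^+\in G_1$ and $ab_2,b_2b_2^+\in G_2$, and the repair after deleting the inserted $ab,ba,ba$ adds the arcs $bb_2,b_2a$ to $G_1$ and $bb_1,b_1a$ to $G_2$ and then exploits the parallel choices $b_i^+b$/$b_i^+a$ --- an argument with no counterpart in Case~1.1/1.2. In \ref{ope:d4} the inserted arcs are the triangle $ab,bc,ca$, so the scaffold kept in both $G_1$ and $G_2$ is a $3$-cycle $abca$, not the $2$-cycle $x_1x_2x_1$ your proposal describes (``keep one 2-cycle $ab\to ba$'' is wrong here --- no arc $ba$ is inserted); and after deleting the inserted arcs one must re-route using $ca_1$ together with the parallel choices $a_1^+\!\cdot$, $b_1^+\!\cdot$. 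Without these two dedicated constructions the proof is incomplete: the Case~1.1/1.2 machinery presupposes a single pivot $t$ with $x_2t,tt^+$ both in the same side, which fails in \ref{ope:b5}, and presupposes a $2$-cycle scaffold, which fails in \ref{ope:d4}. You would need to supply the explicit reallocations (or an equally explicit substitute) for these two procedures, and then re-verify the analogue of Remark~\ref{rmk:legal} for them, to close the argument.
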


\begin{proof}
    Note that sometimes we regard some vertices in the form of \ref{ope:a6} or \ref{ope:a7} to enter \ref{ope:a6} or \ref{ope:a7}, for convenience, we use the term `enter \ref{ope:a6} or \ref{ope:a7}' in the following, though we are now dealing with the non-strong part. 
    
    We can see that adding additional arcs only occurs in cases like \ref{ope:a6}, \ref{ope:a7}, \ref{ope:b5}, \ref{ope:d4}, \ref{ope:a6}*, \ref{ope:a7}*, \ref{ope:b5}* and \ref{ope:d4}*.
    
    We may first assume we always added additional arcs in \ref{ope:a1}-\ref{ope:d5} by symmetry.

    As we have added additional arcs in $D[V_2]$, such that there are two pairs of parallel arcs in the form of $x_1x_2,x_2x_1$ or three pair of parallel arcs $ab,bc,ca$ in $\bar{G}_\text{new}$, so $\bar{G}_\text{new}$ is not isomorphic to the counterexamples in Theorem~\ref{thm:multi semi SAD}. Thus, we can find a strong arc decomposition $\bar{G}_1$ and $\bar{G}_2$ of $\bar{G}_\text{new}$. By lifting all splitting arcs in $\bar{G}_1$ and $\bar{G}_2$ to obtain $G_1$ and $G_2$, which is a pending decomposition of $D$ with the additional arcs by Remark~\ref{rmk:critical is pending}. What to do next is to reallocate the arcs of $G_1$ and $G_2$ so that they are always strong. And we have the following: 
    (For convenience, here we continue the case count from Lemma~\ref{lem: gnew has pending})
    
    \noindent\textbf{Case 3:} We have added additional arcs for exact one time.
 
        \textbf{Case 3.1:} If enter~\ref{ope:a6} or \ref{ope:a7}, then we do \ref{choose (1)}-\ref{choose (4)} and the same proof in Case 1.1 and Case 1.2 in the proof of Lemma~\ref{lem: gnew has pending}.
	
        \textbf{Case 3.2:} If we enter~\ref{ope:b5}, like \ref{choose (1)}-\ref{choose (3)} of Lemma~\ref{lem: gnew has pending}, we may assume $ab_1,b_1b_1^+\in G_1, ab_2,b_2b_2^+\in G_2$. Let $G_i$ contain 2-cycle $aba$ for $i=1,2$. For any $v\in V_2\backslash\{a,b\}$, $va\in A(G_i)$ and $vb\in A(G_{3-i})$ for some $i\in [2]$ since $G_i$ contains a 2-cycle $aba$ for $i=[2]$.

        Remove the additional arcs $ab,ba$ in $G_1$, and the additional arc $ba$ in $G_2$. Add $bb_2,b_2a$ to $G_1$, $bb_1,b_1a$ to $G_2$.

        We want to show that by some reallocation of arcs in $G_1$ and $G_2$, the resulting two graphs (we still call them $G_1$ and $G_2$ for convenience) are still strong and can make up a pending decomposition.
        In the following, we only move the arcs which are incident to $a$ and $b$, in addition, for any vertex $v\in V_2\backslash\{a,b\}$, once we move $vx_i$ from $G_j$ to $G_{3-j}$, then we move $vx_{3-i}$ from $G_{3-j}$ to $G_j$ for $i,j=[2]$. After applying reallocation, we only need to check if there is an $(a,b)$-path and a $(b,a)$-path in $G_i$ for $i=[2]$ to make sure that both $G_1$ and $G_2$ remian strong.
        
        Choose parallel arc $b_1^+b$ in $G_1$, $b_1^+a$ in $G_2$, $b_2^+b$ in $G_1$, $b_2^+a$ in $G_2$, and there is no conflict if $b_1^+=b_2^+$. It's obvious that there is an $(a,b)$-path and a $(b,a)$-path in $G_i$ for $i=[2]$.
	
        \textbf{Case 3.3:} If we enter~\ref{ope:d4}, like \ref{choose (1)}-\ref{choose (3)} of Lemma~\ref{lem: gnew has pending}, we may assume $aa_1,a_1a_1^+\in G_1, ba_1,a_1b_1^+\in G_2$. Let $G_i$ contain 3-cycle $abca$ for $i=1,2$. For any $v\in V_2\backslash\{a,b,c\}$, $vv_1\in A(G_1)$ and $vv_2\in A(G_2)$ for some $v_1,v_2\in \{a,b,c\}$ since $G_i$ contains a 2-cycle $aba$ for $i=1,2$.

        Remove the additional arc $ab$ from $G_1$, and the additional arcs $bc,ca$ from $G_2$. Add $ca_1$ to $G_2$. 

        What we do next is like Case 3.2.
        
        Choose parallel arc $a_1^+b$ in $G_1$, $a_1^+a,a_1^+c$ in $G_2$. $b_1^+b$ in $G_1$, $b_1^+a,b_1^+c$ in $G_2$. It's obvious that there is an $(a,b)$-path, a $(b,c)$-path and and a $(c,a)$-path in $G_i$ for $i=[2]$.
	

	See Figure~\ref{fig:case of b5d4} as an illustration. 
	
	\begin{figure}[H]
		\centering
		\begin{minipage}[t]{0.45\linewidth}
			\centering\begin{tikzpicture}[scale=0.3]

				\filldraw[black](15,0) circle (5pt)node[label=right:$b$](b){};
				\filldraw[black](10,0) circle (5pt)node[label=left:{$a$}](a){};
				\filldraw[black](12.5,4) circle (5pt)node[label=right:{$b_1$}](b1){};
				\filldraw[black](12.5,-4) circle (5pt)node[label=right:{$b_2$}](b2){};
				\filldraw[black](3,1.5) circle (5pt)node[label=above:{$b_1^+$}](b1+){};
				\filldraw[black](3,-1.5) circle (5pt)node[label=below:{$b_2^+$}](b2+){};

				\path[draw, bend left=30, line width=0.8pt, red] (a) to (b1);
				\path[draw, bend left=-30, line width=0.8pt, red] (b2) to (a);
				\path[draw, line width=0.8pt, red] (b1) to (b1+);
				\path[draw, line width=0.8pt, red] (b) to (b2);
				
				\path[draw, bend left=30, line width=0.8pt, green] (b1) to (a);
				\path[draw, bend left=-30, line width=0.8pt, green] (a) to (b2);
				\path[draw, line width=0.8pt, green] (b) to (b1);
				\path[draw, line width=0.8pt, green] (b2) to (b2+);
				
				\path[draw, line width=0.8pt, red] (b1+) to (b);
				\path[draw, line width=0.8pt, green] (b2+) to (a);
				\path[draw, line width=0.8pt, green] (a) to (b);
			\end{tikzpicture}\caption*{\ref{ope:b5}}
		\end{minipage}
		\begin{minipage}[t]{0.45\linewidth}
			\centering\begin{tikzpicture}[scale=0.3]

				\filldraw[black](16,0) circle (5pt)node[label=right:$c$](c){};
				\filldraw[black](9,0) circle (5pt)node[label=above:{$a_1$}](a1){};
				\filldraw[black](12.5,3) circle (5pt)node[label=right:{$a$}](a){};
				\filldraw[black](12.5,-3) circle (5pt)node[label=right:{$b$}](b){};
				\filldraw[black](3,1.5) circle (5pt)node[label=above:{$a_1^+$}](a1+){};
				\filldraw[black](3,-1.5) circle (5pt)node[label=below:{$b_1^+$}](b1+){};

				\path[draw, line width=0.8pt, red] (b) to (c);
				\path[draw, line width=0.8pt, red] (c) to (a);
				\path[draw, line width=0.8pt, red] (a) to (a1);
				\path[draw, line width=0.8pt, red] (a1) to (a1+);
				
				\path[draw, line width=0.8pt, green] (a) to (b);
				\path[draw, line width=0.8pt, green] (b) to (a1);
				\path[draw, line width=0.8pt, green] (a1) to (b1+);
				\path[draw, line width=0.8pt, green] (c) to (a1);
				
				\path[draw, line width=0.8pt, red] (a1+) to (b);
				\path[draw, line width=0.8pt, green] (b1+) to (c);
				\path[draw, line width=0.8pt, green] (b1+) to (a);
			\end{tikzpicture}\caption*{\ref{ope:d4}}
		\end{minipage}
		\hfill
		\caption{An illustration of reduction in \ref{ope:b5},\ref{ope:d4}, where red arcs are in $G_1$, green arcs are in $G_2$.}
		\label{fig:case of b5d4}
	\end{figure}

    \noindent\textbf{Case 4:} We add additional arcs two times. We define $S:=\{t\}$ if we enter \ref{ope:a6} or \ref{ope:a7}, $S:=\{b_1,b_2\}$ if we enter \ref{ope:b5}, $S:=\{a_1\}$ if we enter \ref{ope:d4}, and so we can define the corresponding $S^*$.
 
    \textbf{Case 4.1:} When $S\cap S^*\neq \emptyset$:
    
    
    If we enter \ref{ope:d4}, then we must enter \ref{ope:d4}*, as there are 4 arcs incident to $a_1$ in $G_\text{new}$, and for \ref{ope:a6}* and \ref{ope:a7}*, there are only 2 arcs incident to $t^*$, for \ref{ope:b5}, there are only 2 arcs incident to both $b_1^*$ and $b_2^*$. Firstly, we do the same operations as those in Case 3.3, and then we do the same operations as those in Case 3.3*, where Case 3.3* is the symmetric operation of Case 3.3. The reason why we can do this is that after Case 3.3, $G_1$ and $G_2$ are strong, and $a_1^*c^*$ has not been used in $G_1$ and $G_2$.
    
    If we enter \ref{ope:a7}, then $S\cap S^*\neq \emptyset$ is impossible, as for any vertex $x$ in $S^*$, $x$ has at least two out-neighbors besides $x_1,x_2$, and $t$ has only one out-neighbor besides $x_1,x_2$. For a similar reason, if we enter \ref{ope:b5}, then $S\cap S^*\neq \emptyset$ is impossible.
	
    The only remained case is that we enter \ref{ope:a6} and \ref{ope:a6}*. We do the same operations as those in \ref{choose (1)}-\ref{choose (4)} and Case 2.1 in Lemma~\ref{lem: gnew has pending}.
	
    \textbf{Case 4.2:} When $S\cap S^*= \emptyset$ and we may assume we encounter $R$ first by symmetry:

    \textbf{Step 1.} We do the same arguments as those in Case 3.

    \textbf{Step 2.}  We do the same arguments as those in Case 3*, where Case 3* is the symmetric case of Case 3.

    In every case from Case 1 to Case 4, we can always get a pending decomposition $G_1$ and $G_2$ of $D$, which meets our needs.
    
    The reason why we can do the above two steps is that:

    \begin{itemize}
        \item If $G_1$ and $G_2$ are strong, and $a_1^*c^*$ has not been used in $G_1$ and $G_2$, then we can enter Case 3.3*. This is guaranteed by $S\cap S^*= \emptyset$.

        \item If $G_1$ and $G_2$ are strong, and $b_2^*b^*, a^*b_2^*, b_1^*b^*, a^*b_1^*$ have not been used in $G_1$ and $G_2$, then we can enter Case 3.2*. This is guaranteed by $S\cap S^*= \emptyset$.

        \item  If $G_1$ and $G_2$ satisfy \ref{i)})*, \ref{ii)})*, \ref{iii)})* in Remark~\ref{rmk:legal}, we can enter Case 3.1*. \ref{i)})* is guaranteed by $G_1$ and $G_2$ are strong. \ref{iii)})* is guaranteed by $S\cap S^*= \emptyset$. We may assume we choose $x$ in $R$, as $S\cap S^*= \emptyset$, so $x\notin R^*$ and this guarantees \ref{ii)})*.
    \end{itemize}
\end{proof}

\begin{lemma}\label{lem:counter example2}
	Let $D=(V_1,V_2;A)$ be a 2-arc-strong split digraph with $|V_2(D)|\geq 4$ and $D[V_2]$ is not strong, which means it has the acyclic ordering of its strong component $C_1,\ldots,C_p$ ($p\geq 2$). If $D$ has a copy of at least one of the following structures, then $D$ has no strong arc decomposition.
	\begin{itemize}
		\item $D[C_p]$ is a 3-cycle, say $abca$, and there exists $u,v\in V_1$, such that $N_D^+(b)=\{u,v,c\}, N_D^+(c)=\{v,a\}, N_D^+(a)=\{u,b\}, N_D^+(u)=\{a,u^+\}, N_D^+(v)=\{b,v^+\}$, where $u^+,v^+\in V_2\setminus C_p$, and they can be the same one, besides, $N_D^-(u)=\{a,b\}, N_D^-(v)=\{b,c\}$.
		
		\item Reversing arcs in the first case.
	\end{itemize}
\end{lemma}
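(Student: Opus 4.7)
The plan is a proof by contradiction. Suppose $D$ admits a strong arc decomposition with parts $D_1$ and $D_2$. The hypotheses fix every out-arc of $a,b,c,u,v$ and every in-arc of $u,v$, producing eleven ``structural'' arcs $au, ab, bu, bv, bc, ca, cv, ua, uu^+, vb, vv^+$ whose distribution between $D_1$ and $D_2$ is heavily constrained by the fact that each $D_i$ must have $d^+_{D_i}(x),d^-_{D_i}(x)\ge 1$ for every vertex.

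First, because $C_p$ is the terminal strong component of $D[V_2]$ and because $N_D^+(a), N_D^+(b), N_D^+(c)$ meet $V_1$ only in $\{u,v\}$, every path from $C_p$ into $V_2\setminus C_p$ in $D$ must traverse the ``bridge'' arc $uu^+$ or the arc $vv^+$. Hence each $D_i$ contains at least one of these two arcs, so up to swapping $D_1$ and $D_2$ I may assume $uu^+\in D_1$ and $vv^+\in D_2$. The out-degree constraints $d^+(u)=d^+(v)=2$ then force $ua\in D_2$ and $vb\in D_1$.

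Next I would split on the placements of $au$, $cv$, and $bc$, yielding eight subcases. The splits $\{au,ab\}, \{au,bu\}, \{ca,cv\}, \{bv,cv\}$ determine every remaining structural arc. In each subcase the goal is to exhibit a subset $S\subseteq\{a,b,c,u,v\}$ such that every structural arc leaving $S$ is assigned to the same $D_i$; since $N_D^+(x)$ is completely specified for every $x\in S$, no arc of $D$ leaves $S$ other than the structural ones, and so $D_{3-i}$ contains no arc leaving $S$ whatsoever. Because $u^+, v^+\in V_2\setminus C_p$ lie outside $S$, the subdigraph $D_{3-i}$ cannot be strong, a contradiction. Concretely: if $au\in D_2$, then $S=\{a,u\}$ is a sink in $D_2$, as its only external structural out-arcs $ab$ and $uu^+$ are both in $D_1$; if $au\in D_1$ and $cv\in D_1$, then $S=\{v,b,c\}$ is a sink in $D_1$, since $vv^+, bu, ca$ all lie in $D_2$; if $au\in D_1$, $cv\in D_2$, $bc\in D_2$, then $S=\{b,v\}$ is a sink in $D_1$; and if $au\in D_1$, $cv\in D_2$, $bc\in D_1$, then $S=\{a,b,u\}$ is a sink in $D_2$.

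The main obstacle is the thoroughness of the case analysis rather than a single conceptual step: the triangle $abca$ looks symmetric, but the structure breaks rotational symmetry because $u$ and $v$ are attached to the edges $ab$ and $bc$ while no comparable $V_1$-vertex is attached to $ca$. Hence the cases cannot be merged by symmetry and each sink must be identified individually. The reversed-arc version of the lemma follows by applying the same argument to the reverse digraph.
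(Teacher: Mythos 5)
Your proof is correct and takes essentially the same approach as the paper: both exploit that $uu^+$ and $vv^+$ are the only arcs leaving $\{a,b,c,u,v\}$, use the in- and out-degree constraints at $a,b,c,u,v$ to pin down the structural arcs, and derive a contradiction by producing a vertex set with no out-arc in one part of the decomposition. The paper chains these deductions linearly (WLOG $au\in D_1$, then forcing $uu^+\in D_1$, $ab,bu,ua,vv^+,bv,ca\in D_2$ and $cv,vb\in D_1$, yielding no $(c,a)$-path in $D_1$) rather than branching explicitly on $au,cv,bc$ and naming a sink set in each branch, but the underlying reasoning is identical.
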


\begin{proof}
	If the first case occurs, suppose to the contrary that $D$ has a strong arc decomposition $D_1, D_2$ and we may assume $au\in D_1$ and $ab\in D_2$ as $d_D^+(a)=2$. Since $d_D^-(u)=2,d_D^+(u)=2$, we have $uu^+\in D_1, bu,ua\in D_2$, otherwise there is no out-arc of $\{a,u\}$ in $D_1$. Besides, we have $vv^+\in D_2$ as there is a $(\{a,b,c\},V_2\setminus\{a,b,c\})$-path in $D_2$. Consider the $(\{a,b,u\},v)$-path in $D_2$, if it is $bcv$, then there is no $(b,c)$-path in $D_1$, so it must be $bv\in D_2$ and $cv,vb\in D_1$ as $d_D^-(v)=2,d_D^+(v)=2$. Besides, $ca\in D_2$ as $d_D^+(c)=2$. Now, there is no $(c,a)$-path in $D_1$ no matter how we distribute other arcs, which contradicts the fact that $D_1$ is strong. So, $D$ has no strong arc decomposition. By the same arguments, we can prove the second case.
\end{proof}

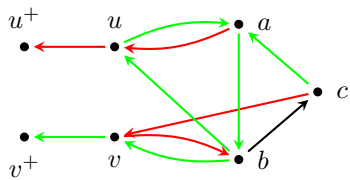
\begin{figure}[H]
    \centering
    \begin{tikzpicture}[scale=0.3]

				\filldraw[black](16,0) circle (5pt)node[label=right:$c$](c){};
				\filldraw[black](12.5,3) circle (5pt)node[label=right:{$a$}](a){};
				\filldraw[black](12.5,-3) circle (5pt)node[label=right:{$b$}](b){};
				\filldraw[black](7,2) circle (5pt)node[label=above:{$u$}](u){};
				\filldraw[black](7,-2) circle (5pt)node[label=below:{$v$}](v){};
                \filldraw[black](3,2) circle (5pt)node[label=above:{$u^+$}](u+){};
				\filldraw[black](3,-2) circle (5pt)node[label=below:{$v^+$}](v+){};
				
				\path[draw, line width=0.8pt, red, bend left=15] (a) to (u);
				\path[draw, line width=0.8pt, red] (u) to (u+);
				\path[draw, line width=0.8pt, red] (c) to (v);
				\path[draw, line width=0.8pt, red, bend left=15] (v) to (b);
				
				\path[draw, line width=0.8pt, green] (a) to (b);
				\path[draw, line width=0.8pt, green] (b) to (u);
				\path[draw, line width=0.8pt, green, bend left=15] (u) to (a);
				\path[draw, line width=0.8pt, green] (v) to (v+);
                \path[draw, line width=0.8pt, green, bend left=15] (b) to (v);
                \path[draw, line width=0.8pt, green] (c) to (a);
				
				\path[draw, line width=0.8pt] (b) to (c);
			\end{tikzpicture}
    \caption{An illustration of Proof of Lemma~\ref{lem:counter example2}}
    \label{fig:counter2}
\end{figure}

\section{Concluding remarks}
The proofs of Theorem~\ref{thm:2as} when $|V_2|\leq 4$ are included in the appendix. These cases are somewhat tedious but not particularly difficult.

By Theorem~\ref{thm:2as}, we can conclude the following interesting result.

\begin{corollary}\label{cor:add arc}
	For any 2-arc-strong split graph $D=(V_1,V_2;A)$, it has a strong arc decomposition by adding at most 1 specific arc in $D[V_2]$.
\end{corollary}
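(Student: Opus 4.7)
The plan is to deduce the corollary directly from Theorem~\ref{thm:2as} by a finite case analysis over the exceptional families.

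If $D$ already admits a strong arc decomposition, zero arcs suffice. Otherwise, by Theorem~\ref{thm:2as}, $D$ is isomorphic, up to arc-reversal, to one of the finitely many structures described in Lemma~\ref{lem:counter example}, Lemma~\ref{lem:counter example2}, or the Appendix. For each such structure I would exhibit a single arc $e$ with both endpoints in $V_2$ such that $D + e$ is still a 2-arc-strong split digraph with the same partition $(V_1,V_2)$ (the split structure is automatically preserved, and 2-arc-strongness cannot be destroyed by adding an arc) but is no longer isomorphic to any exceptional structure. Applying Theorem~\ref{thm:2as} to $D + e$ then yields the desired strong arc decomposition.

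For Lemma~\ref{lem:counter example} the obstruction is localised on $\{x_1,x_2,u\}$ with the rigid out-neighborhood pattern $N_D^+(x_1)=\{x_2,u\}$, $N_D^+(x_2)=\{v,u\}$, $N_D^+(u)=\{x_1,x_3\}$. The contradiction in the proof of Lemma~\ref{lem:counter example} uses each of these identities in an essential way, so breaking any one of them by adding a single arc restores enough flexibility. A natural choice is $e = x_2 x_1$ (which is absent since $N_D^+(x_2)=\{v,u\}$); one then writes down an explicit strong arc decomposition of $D + e$ by placing $\{x_1 x_2, x_2 u, u x_3\}$ into $D_1$ and $\{x_1 u, x_2 v, x_2 x_1, u x_1\}$ into $D_2$, and using Lemma~\ref{lem:(D-X)->D} to extend to the rest of $D$. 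The arc-reversed second bullet and the structures in Lemma~\ref{lem:counter example2} are handled symmetrically, with $e$ chosen inside the triangle $\{a,b,c\}$ or between that triangle and an outside vertex to break the forced assignment.

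For the appendix exceptions I would proceed identically: each listed structure is defined by a rigid local configuration on a bounded vertex set, so inspecting each one yields a single arc $e \in V_2 \times V_2$ whose addition invalidates every condition that produced the obstruction. The main obstacle is the routine but tedious verification that $D + e$ does not inadvertently satisfy the definition of any \emph{other} exceptional structure, and that this remains true under the arc-reversal symmetry. Since all exceptional families impose very specific in- and out-degree constraints on a small bounded number of vertices, this is a finite bookkeeping exercise: any chosen $e$ perturbs at most one or two such degrees, so one simply checks that no perturbed pattern coincides with another exceptional pattern.
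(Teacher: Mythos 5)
The paper states this corollary without proof, so the only benchmark is whether your plan actually closes the argument; it is certainly the intended route (add one arc to escape the exception list of Theorem~\ref{thm:2as}), but as written it has a genuine gap. The obstructions of Lemma~\ref{lem:counter example} and Lemma~\ref{lem:counter example2} are \emph{local} patterns, and a single $2$-arc-strong split digraph can contain several of them at once, so the "finite bookkeeping on a bounded vertex set" you invoke is not local. Concretely, one can build $D$ with $V_2=\{x_1,x_2,y_1,y_2,z\}$ and $V_1=\{u,u'\}$ realizing the first bullet of Lemma~\ref{lem:counter example} on $(x_1,x_2,u)$ (with $x_3=v=z$) and simultaneously the second (reversed) bullet on $(y_1,y_2,u')$ (with $y_3=w=z$); both copies survive unless the added arc is chosen to meet both, e.g.\ an arc from $\{x_1,x_2\}$ to $\{y_1,y_2\}$. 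Your "natural choice" $e=x_2x_1$ destroys only the first copy, so $D+e$ still has no strong arc decomposition in this example. A correct proof must (i) bound how many obstruction copies (from both lemmas, in both orientations, plus the Appendix digraphs) can coexist -- two disjoint copies of the \emph{same} bullet are impossible because, e.g., two vertices of $V_2$ each with a single out-neighbour in $V_2$ would have to be nonadjacent, but mixed combinations are not excluded -- and (ii) exhibit one arc hitting all constrained neighbourhoods simultaneously, then still verify no new obstruction is created.

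Two smaller issues: the explicit decomposition you give for $D+x_2x_1$ is not a strong arc decomposition of anything ($\{x_1x_2,x_2u,ux_3\}$ is a path with no arc entering $x_1$), and Lemma~\ref{lem:(D-X)->D} cannot be applied to a pair of arc sets that are not strong spanning subdigraphs of an induced subdigraph $D[X]$; the right tool for completing a local partition to all of $D$ is the pending-decomposition machinery (Lemma~\ref{lem:pending decom}). Also, for the Appendix exceptions you must check that the chosen arc is genuinely absent from $D[V_2]$: if it duplicates an existing arc, $D+e$ is a multidigraph and Theorem~\ref{thm:2as} no longer applies to it, so one would instead have to argue via Theorem~\ref{thm:multi semi SAD} or Lemma~\ref{lem:parallel} as the Appendix does.
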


\newpage
\appendix
	
{\bf \huge Appendix} 

\section{Proof of Theorem~\ref{thm:2as} when $|V_2|\leq3$}

Let $D = (V_1,V_2;A)$ be a 2-arc-strong split digraph. We say $\{V_1,V_2\}$ is a {\em maximal partition} if there is no vertex $x$ in $V_1$ such that $N(x)=V_2$. If $V_1 = \emptyset$, then $D$ is a semicomplete digraph which is reduced to Theorem~\ref{thm:semi SAD}. Now, suppose $V_1 \neq \emptyset$. Since each vertex in $V_1$ has out-degree and in-degree at least 2, we have $|V_2| \geq 2$. When $|V_2| = 2$, say $V_2 = \{u, v\}$, for any vertex $x\in V_1$, the digraph must contain $2$-cycles $xux$ and $xvx$. This implies that we can find a new vertex partition $V(D) = V'_1 \cup V'_2$ where $V'_2 = V_2 \cup \{x\}$ for some vertex $x \in V_1$ and $V'_1 = V(D) \setminus V'_2$, such that $V'_1$ is an independent set and the subdigraph induced by $V'_2$ is semicomplete. Thus, the case when $|V_2| = 2$ is reduced to the case when $|V_2| = 3$ with maximal partition $V(D) = V_1 \cup V_2$.

\begin{proposition}
    Let $D = (V_1,V_2;A)$ be a 2-arc-strong split multi-digraph with maximal partition $V(D) = V_1 \cup V_2$ such that $V_1$ is an independent set, $V_2$ induces a semicomplete multi-digraph and there is no multi-arc between $V_1$ and $V_2$. If $|V_2| = 3$, then $D$ has a strong arc decomposition.
\end{proposition}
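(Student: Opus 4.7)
The plan is to pin down the rigid structure of $V_1$, then reduce the problem to a strong arc decomposition of a small semicomplete multi-digraph on $V_2$ via splitting-off, and lift the result using the critical path pair framework of Section~2. Since the partition is maximal, $N(x) \subsetneq V_2$ for every $x \in V_1$, so $|N(x)| \leq 2$. Combined with the assumption of no multi-arcs between $V_1$ and $V_2$ and the bound $d^+(x), d^-(x) \geq 2$ coming from $D$ being 2-arc-strong, this forces $|N^+(x)| = |N^-(x)| = 2$ and $N^+(x) = N^-(x) = N(x)$, so every $x \in V_1$ lies on a 2-cycle with each of its two $V_2$-neighbors. Writing $V_2 = \{a, b, c\}$, partition $V_1 = T_{ab} \cup T_{bc} \cup T_{ac}$ according to the unordered neighbor pair.

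Next, construct an auxiliary multi-digraph $D^{\star}$ on $V_2$ by cross-splitting at every $x \in V_1$ with $N(x) = \{u, v\}$: replace the 2-path $uxv$ by a splitting arc $uv$ and the 2-path $vxu$ by a splitting arc $vu$. For every $v \in V_2$ the degree count gives $d^{\pm}_{D^{\star}}(v) = d^{\pm}_{D[V_2]}(v) + |\{x \in V_1 : v \in N(x)\}| = d^{\pm}_D(v) \geq 2$. Since $|V_2| = 3$, the condition \emph{every vertex has in- and out-degree at least $2$} is equivalent to being 2-arc-strong, so $D^{\star}$ is a 2-arc-strong semicomplete multi-digraph on three vertices. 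All seven exceptional configurations of Theorem~\ref{thm:multi semi SAD} (shown in Figure~\ref{fig-MD-exceptional}) have four vertices, so $D^{\star}$ admits a strong arc decomposition.

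The main obstacle is then lifting this decomposition back to $D$: the two splitting arcs $uv$ and $vu$ produced by the same $x \in V_1$ must land on opposite sides, otherwise the four arcs incident to $x$ would all lift to one side and destroy the pending property. I plan to handle this through the $(X,Y)_B$-critical path pair framework of Lemma~\ref{lem:critical AC to SAD} together with Remark~\ref{rmk:critical is pending}: choose disjoint $X, Y \subsetneq V_2$ (singletons, or the initial and terminal strong components of $D[V_2]$ when $D[V_2]$ is not strong), a feasible arc set $B \subseteq A(V_1, V_2)$, and an $(X,Y)_B$-critical pair $\{Q_1, Q_2\}$, splitting off only the pairs actually used on these paths and on $B$. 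The critical property then guarantees that any strong arc decomposition of $D[V_2] + \bar{A}_{Q_1, Q_2}(B) + \bar{B}$ obtained from Theorem~\ref{thm:multi semi SAD} lifts to a pending decomposition of $D$, which Lemma~\ref{lem:pending decom} turns into the desired strong arc decomposition; any $V_1$-vertex not touched by $Q_1 \cup Q_2 \cup B$ still has at least two in- and out-neighbors in $V_2$ and is absorbed by Lemma~\ref{lem:(D-X)->D}. The hardest bookkeeping step I anticipate is choosing $X$, $Y$ and $B$ so that the auxiliary multi-digraph is 2-arc-strong in every configuration of $D[V_2]$, but since $D[V_2]$ is a semicomplete (multi-)digraph on only three vertices, the elaborate procedure list \ref{ope:a1}--\ref{ope:d5} used in Section~3 collapses to a short finite case check.
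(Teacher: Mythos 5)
Your opening structural observation is correct and matches the paper: maximality forces $|N(x)|\le 2$, and $2$-arc-strength plus the no-multi-arc hypothesis then forces $N^+(x)=N^-(x)=N(x)$ with $|N(x)|=2$, so every $x\in V_1$ sits on two $2$-cycles. Beyond that, however, your proof diverges from the paper's and has a real gap.

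The global cross-split $D^\star$ does admit a strong arc decomposition by Theorem~\ref{thm:multi semi SAD} (it is semicomplete, $2$-arc-strong, and has $3$ vertices so it avoids all seven exceptional graphs), but you correctly note that this does not lift: if the two splitting arcs produced by the same $x\in V_1$ both land in one side, all four arcs at $x$ lift into a single subdigraph and the pending property is destroyed. Your fallback is to abandon $D^\star$ and redo everything through the $(X,Y)_B$-critical path pair machinery, but you never actually carry it out. This is not a harmless deferral. That machinery as developed in Section~3 relies on the nice decomposition of $D[V_2]$ (Theorem~\ref{thm:nice-decom}), which requires order at least $4$; on three vertices you would have to build the analogue of Lemma~\ref{lem:cut-arc} and the procedure lists \ref{ope:a1}--\ref{ope:d5} from scratch, and the paper's own treatment of the analogous small-terminal-component cases ($|C_p|\in\{1,2,3\}$) is anything but a ``short finite case check.'' So the claim that it ``collapses'' is unjustified, and as written the proof does not close.

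The paper avoids all of this by an induction on $|V_1|$ that never needs to split off more than one vertex at a time: remove one $x\in V_1$ (replacing it with the two splitting arcs $\tilde{uv}$, $\tilde{vu}$), apply the induction hypothesis to the resulting smaller split multi-digraph, and then re-distribute the two splitting arcs. The crucial step you are missing is the re-distribution when both splitting arcs land in the same part $B_1$: because $D[V_2]$ is semicomplete, one of $uv$ or $vu$ is a genuine arc of $D[V_2]$, and this parallel arc can be swapped between $B_1$ and $B_2$ to put the two splitting arcs on opposite sides without destroying strong connectivity. That single local swap is what your $D^\star$ construction cannot do all at once, and it is the heart of the argument.
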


\begin{proof}
    Let $V_2 = \{u,v,w\}$. We prove this result by induction on  $|V_1|$. When $|V_1| = 0$, the result follows directly from Theorem~\ref{thm:multi semi SAD}. Now, assume that $|V_1| = n \geq 1$ and that the result holds for all 2-arc-strong split multi-digraphs with maximal partition $V(D) = V_1 \cup V_2$ and $|V_1| \leq n-1$. Choose a vertex $x \in V_1$, there is exactly one vertex in $V_2$, say $w$, which is not adjacent to $x$, since partition $V(D) = V_1 \cup V_2$ is maximal. And so the vertex $x$ dominates $u,v$ and is also dominated by $u,v$. We split off the pair $(ux,xv)$ at $x$ to obtain a  splitting arc $\Tilde{uv}$, and similarly, we split off the pair $(vx,xu)$ at $x$ to obtain a splitting arc $\Tilde{vu}$. Let $D_x$ denote the new multi-digraph obtained after these two splitting-off. Note that $d^{+}_{D_x}(x) = d^{-}_{D_x}(x) = 0$. Let $D'= D_x - x$. Let $V'_1 = V_1 \setminus \{x\}$, then $V(D') = V'_1 \cup V_2$ is also maximal partition of $D'$. 

    \begin{claim}
        The multi-digraph $D'$ is 2-arc-strong. 
    \end{claim}

    \begin{proof}
        Let $a$ and $b$ be two distinct vertices in $V(D')$, and thus also in $V(D)$. Then there are two arc-disjoint paths from $a$ to $b$ in $D$, denoted as $P_1$ and $P_2$.
        If path $P_i$ (where $i = 1,2$) contains $x$, then $P_i$ must include both $u$ and $v$ since $N^{-}(x) = N^{+}(x) = \{u,v\}$. We can then replace the segment $uxv$ on path $P_i$ with the corresponding splitting arc to get a new path $P'_i$ from $a$ to $b$ in $D'$. Thus, we find two arc-disjoint paths from $a$ to $b$ in $D'$.
    \end{proof}
    
    Hence, $D'$ has a strong arc decomposition $A(D') = B_1 \cup B_2$ by induction hypothesis. Now we consider the distribution of the splitting arcs $\Tilde{uv}$ and $\Tilde{vu}$. If both $B_1$ and $B_2$ have exactly one splitting arc, without loss of generality, let $\Tilde{uv}$ lie in $B_1$ and $\Tilde{vu}$ lie in $B_2$. Then $B'_1 := \left(B_1 \setminus \{\Tilde{uv}\}\right) \cup \{ux,xv\}$ and $B'_2 := \left(B_2 \setminus \{\Tilde{vu}\}\right) \cup \{vx,xu\}$ will form a strong arc decomposition of $D$. 
    
    If these two splitting arcs $\Tilde{uv}$ and $\Tilde{vu}$ lie in the same set, say $B_1$. Since $D[V_2]$ is semicomplete, either $uv$ or $vu$ lies in $A(D[V_2])$. Without loss of generality, suppose $uv \in A(D[V_2])$. If $uv \in B_1$, then $D_x[B_1 \setminus \{\Tilde{uv}\}]$ is still strong. This yields that $A(D_x) = B'_1 \cup B'_2$, in which $B'_1 = B_1 \setminus \{\Tilde{uv}\}$ and $B'_2 = B_2 \cup \{\Tilde{uv}\}$, is a strong arc decomposition such that each set has exactly one splitting arc and so we are done. If $uv \in B_2$, then $A(D_x) = B'_1 \cup B'_2$, in which $B'_1 = (B_1 \setminus \{\Tilde{uv}\}) \cup \{uv\}$ and $B'_2 = (B_2 \setminus \{uv\}) \cup \{\Tilde{uv}\}$, is a strong arc decomposition such that each set has exactly one splitting arc and so we are done.
\end{proof}

\begin{corollary}\label{cor:3}
    Let $D = (V_1,V_2;A)$ be a 2-arc-strong split digraph with maximal partition $V(D) = V_1 \cup V_2$ such that $V_1$ is an independent set and $V_2$ induces a semicomplete digraph. If $|V_2| = 3$, then $D$ has a strong arc decomposition.
\end{corollary}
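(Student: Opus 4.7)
\textbf{Proof proposal for Corollary~\ref{cor:3}.}
The plan is to observe that the corollary is essentially a direct specialization of the preceding proposition to the simple-digraph setting, so the bulk of the work has already been done. First I would verify that the hypotheses of the proposition are met by $D$: since $D$ is a simple digraph, it is trivially a multi-digraph with no parallel arcs anywhere, and in particular no multi-arc between $V_1$ and $V_2$; the maximality assumption on the partition $V(D) = V_1 \cup V_2$ is given; $V_1$ is independent and $V_2$ induces a semicomplete subdigraph; and $|V_2| = 3$ matches the hypothesis. Therefore the proposition applies directly and produces a strong arc decomposition of $D$.

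The one minor subtlety I would flag is the following. The proposition is proved by induction on $|V_1|$; in the inductive step one splits off a suitable vertex $x \in V_1$, which can create parallel arcs $\widetilde{uv}$ and $\widetilde{vu}$ inside $V_2$. Thus the inductive argument genuinely needs the multi-digraph setting, even if the input $D$ is simple. This is precisely why the proposition is stated for multi-digraphs and the corollary is phrased only for simple digraphs: the corollary inherits the conclusion because a simple digraph is a particular multi-digraph, but we cannot conversely restrict the proof to simple digraphs.

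Hence the entire proof of the corollary is one line: apply the proposition to $D$ viewed as a split multi-digraph (with no multiple arcs). There is no additional case analysis or construction required beyond invoking the previous result, and no obstacle beyond the already-handled bookkeeping of splitting arcs in the proof of the proposition.
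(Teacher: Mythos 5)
Your proposal is correct and matches the paper's own (implicit) treatment: the corollary is stated immediately after the proposition with no separate proof, precisely because it is the specialization of the multi-digraph proposition to a simple digraph, which automatically satisfies the "no multi-arc between $V_1$ and $V_2$" hypothesis. Your observation about why the proposition must be phrased for multi-digraphs (the inductive splitting-off step creates parallel arcs inside $V_2$) is accurate and is the right thing to flag.
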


\section{Proof of Theorem~\ref{thm:2as} when $|V_2|=4$}

We may only consider the case when each vertex in $V_1$ is adjacent to at most $3$ vertices in $V_2$. 
Since when there is a vertex adjacent to all vertices in $V_2$, then it can be viewed as a split digraph with $|V_2|=5$ or a semicomplete digraph on $5$ vertices, which has been previously discussed.

In the previous proof when $|V_2|\geq 5$, the condition $|V_2|\geq 5$ instead of $|V_2|\geq 4$ is specifically used to avoid certain configurations. The key role of this condition is to ensure that the new digraph $G_\text{new}$ (or original $G$, or $D[V_2]$) remains 2-arc-strong without adding additional arcs. We use $|V_2|\geq 5$ to avoid the case that $\bar{G}_\text{new}$ (or original $\bar{G}$) is isomorphic to one of the seven graphs shown in Theorem~\ref{thm:multi semi SAD}. So, here we only need to focus on this case.

Note that after removing parallel arcs in the seven graphs, each of them is isomorphic to $S_4$, so the semicomplete digraph $D[V_2]$ must be a subdigraph of $S_4$, which implies that there is a 4-circle $v_1v_2v_3v_4v_1$ in $D[V_2]$. Considering isomorphism, $D[V_2]$ can only be one of the following three digraphs. We only focus on the cases where $D$ has no strong arc decomposition.

\begin{figure}[H]
	\centering
	\subfigure{\begin{minipage}[t]{0.23\linewidth}
			\centering\begin{tikzpicture}[scale=0.8]
				\filldraw[black](0,0) circle (3pt)node[label=left:$v_1$](v1){};
				\filldraw[black](2,0) circle (3pt)node[label=right:$v_2$](v2){};
				\filldraw[black](0,-2) circle (3pt)node[label=left:$v_3$](v3){};
				\filldraw[black](2,-2) circle (3pt)node[label=right:$v_4$](v4){};
				\foreach \i/\j/\t in {
					v1/v2/0,
					v2/v3/0,
					v3/v4/0,
					v4/v1/0,
					v1/v3/15,
					v2/v4/15,
					v3/v1/15,
					v4/v2/15
				}{\path[draw, line width=0.8] (\i) edge[bend left=\t] (\j);}			
			\end{tikzpicture}\caption*{$S_4$}\end{minipage}}
	\subfigure{\begin{minipage}[t]{0.23\linewidth}
			\centering\begin{tikzpicture}[scale=0.8]
				\filldraw[black](0,0) circle (3pt)node[label=left:$v_1$](v1){};
				\filldraw[black](2,0) circle (3pt)node[label=right:$v_2$](v2){};
				\filldraw[black](0,-2) circle (3pt)node[label=left:$v_3$](v3){};
				\filldraw[black](2,-2) circle (3pt)node[label=right:$v_4$](v4){};
				\foreach \i/\j/\t in {
					v1/v2/0,
					v1/v2/0,
					v2/v3/0,
					v3/v4/0,
					v4/v1/0,
					v1/v3/15,
					v2/v4/0,
					v3/v1/15
				}{\path[draw, line width=0.8] (\i) edge[bend left=\t] (\j);}	
			\end{tikzpicture}\caption*{$S_{4,-1}$}\end{minipage}}
	\subfigure{\begin{minipage}[t]{0.23\linewidth}
			\centering\begin{tikzpicture}[scale=0.8]
				\filldraw[black](0,0) circle (3pt)node[label=left:$v_1$](v1){};
				\filldraw[black](2,0) circle (3pt)node[label=right:$v_2$](v2){};
				\filldraw[black](0,-2) circle (3pt)node[label=left:$v_3$](v3){};
				\filldraw[black](2,-2) circle (3pt)node[label=right:$v_4$](v4){};
				\foreach \i/\j/\t in {
					v1/v2/0,
					v2/v3/0,
					v3/v4/0,
					v4/v1/0,
					v1/v3/0,
					v2/v4/0
				}{\path[draw, line width=0.8] (\i) edge[bend left=\t] (\j);}	
			\end{tikzpicture}\caption*{$S_{4,-2}$}\end{minipage}}
\end{figure}

\begin{lemma}\label{lem:parallel}
    If $D[V_2]+C$ has a strong arc decomposition, then $D$ has a strong arc decomposition, where $C$ is a splitting arc set, and for any vertex $x$ in $V_1$, $x$ satisfies one of the following
    \begin{itemize}
        \item There are at most one splitting arc in $C$ obtained by splitting-off an arc pair at $x$.

        \item At least one splitting arc in $C$ obtained by splitting-off an arc pair at $x$ has a parallel arc in $D[V_2]$.
    \end{itemize}
\end{lemma}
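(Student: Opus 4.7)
The plan is to start from a strong arc decomposition of $D[V_2]+C$, modify it via parallel-arc swaps so that lifting the splitting arcs back in $D$ produces a pending decomposition of $D$, and then invoke Lemma~\ref{lem:pending decom}. I would first take a strong arc decomposition $\bar{G}_1, \bar{G}_2$ of $D[V_2]+C$ supplied by the hypothesis; for each $t \in V_1$, let $k_i(t)$ denote the number of splitting arcs in $\bar{G}_i \cap C$ obtained by splitting-off an arc pair at $t$, and set $k(t) := k_1(t) + k_2(t)$. Lifting each splitting arc $uv \in \bar{G}_i \cap C$ back to its defining pair $(ut, tv)$ will produce arc-disjoint strong subdigraphs $G_1, G_2$ of $D$, in which each $t \in V_1$ has both in-degree and out-degree equal to $k_i(t)$ in $G_i$.

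The key step will be to modify $\bar{G}_1, \bar{G}_2$ so that every $t \in V_1$ with $k(t) \geq 2$ ends up with $k_1(t) \geq 1$ and $k_2(t) \geq 1$. For such a $t$, the hypothesis supplies a splitting arc $s = uv \in C$ at $t$ whose parallel arc $e = uv$ exists in $D[V_2]$. If all splittings at $t$ currently lie in one part, say $\bar{G}_1$, I would perform one of two operations. If $e \in \bar{G}_1$ as well, I would move $s$ from $\bar{G}_1$ into $\bar{G}_2$: strongness of $\bar{G}_1$ persists because $e$ remains available between $u$ and $v$, and $\bar{G}_2$ stays strong since adding an arc to a strong digraph preserves strongness. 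If instead $e \in \bar{G}_2$, I would swap $s$ and $e$ between the two parts; viewed as multi-digraphs, the multiplicity of the arc $uv$ in each part is unchanged, so strongness is trivially preserved in both. Since each such modification is confined to the pair $\{s,e\}$ of parallel arcs, the operations for different choices of $t$ do not interfere.

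After these modifications, each $t \in V_1$ involved in any splitting will satisfy either $k(t) \leq 1$ or $k_1(t), k_2(t) \geq 1$. Hence, for any $t \in V(G_i) \setminus V(G_{3-i})$, necessarily $k_i(t) = 1$ and $k_{3-i}(t) = 0$, so $t$ uses exactly one in-arc and one out-arc in $G_i$. Because $D$ is 2-arc-strong, $t$ has both in-degree and out-degree at least $2$ in $D$, so at least one in-arc and one out-arc of $t$ lie in $A(D) \setminus A(G_i)$. This establishes that $(G_1, G_2)$ is a pending decomposition of $D$, and Lemma~\ref{lem:pending decom} then produces a strong arc decomposition of $D$.

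The main obstacle will be to justify the swap/move step carefully: one must verify that exchanging a splitting arc with its $D[V_2]$-parallel across the two parts preserves strongness of both parts simultaneously. Once this is settled by treating $D[V_2]+C$ as a multi-digraph and observing that such swaps do not alter the multiplicities of arcs between any pair of vertices in each part, the remainder is a straightforward bookkeeping check that the lifted pair satisfies the definition of a pending decomposition.
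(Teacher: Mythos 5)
Your proposal is correct and takes essentially the same route as the paper: start from a strong arc decomposition of $D[V_2]+C$, use the parallel-arc swap/move to redistribute the splitting arcs so that every $t\in V_1$ with two or more splittings has at least one in each part, then lift and invoke Lemma~\ref{lem:pending decom}. The only cosmetic difference is that you spell out the strongness-preservation and non-interference checks that the paper leaves implicit.
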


\begin{proof}
    We may assume $D[V_2]+C$ has a strong arc decomposition $D_1'$ and $D_2'$. Fix an vertex $x\in V_1$, if there are more than two splitting arcs obtained by splitting-off arc pairs at $x$ in $C$, say $\Tilde{ab}$ and $\Tilde{cd}$, where $\Tilde{ab}$ is the splitting arc with parallel arc $ab\in A(D[V_2])$, then we can distribute $\Tilde{ab}$ and $\Tilde{cd}$ to different $D_i',i\in [2]$. This is because if $\Tilde{ab},\Tilde{cd}\in A(D_i')$, then we can remove $\Tilde{ab}$ to $A(D_{3-i}')$ and remove $ab$ to $A(D_i')$ if $ab\in A(D_{3-i}')$, and we can check $D_1'$ and $D_2'$ are still strong. We do this distribution step by step for all $x\in V_1$. Now we lift all splitting arcs in $D_1'$ and $D_2'$ to get $D_1$ and $D_2$, we can check $D_1$ and $D_2$ form a pending decomposition of $D$, therefore, $D$ has a strong arc decomposition by Lemma~\ref{lem:pending decom}.
\end{proof}

\subsection{$D[V_2]$ is $S_{4}$.}

If $V_1=\emptyset$, then $D=S_4$ which has no strong arc decomposition. Now we will prove that if $|V_1|\geq 1$, then $D$ has a strong arc decomposition. Suppose for contradiction that $D$ does not have a strong arc decomposition. There exists $a\in V_1$, then $D[V_2]+\{a^-a^+\}$ has no strong arc decomposition by Lemma~\ref{lem:parallel}, where $a^-a^+$ is obtained by splitting-off the arc pair $(a^-a,aa^+)$ at $a$. So $a^-a^+=v_3v_1$ or $v_1v_2$ in the sense of symmetry by Theorem~\ref{thm:multi semi SAD}. Now we consider the other arcs adjacent to $a$ in $D$.

\begin{itemize}
	\item We assume $a^-a^+=v_3v_1$. If $av_2\in D$, then $D[V_2]+\{v_3v_2\}$ has a strong arc decomposition. If $v_2a\in D$, then $D[V_2]+\{v_2v_1\}$ has a strong arc decomposition. By Lemma~\ref{lem:parallel}, $D$ has a strong arc decomposition. Thus, $av_2,v_2a\notin D$. Then, $av_4\in D$ or $av_3\in D$ as $d_D^+(a)\geq 2$; $v_1a\in D$ or $v_3a\in D$ as $d_D^-(a)\geq 2$. 
	
	If $av_4\in D$ and $v_1a\in D$, then $D[V_2]+\{v_1v_4\}$ has a strong arc decomposition. If $av_4\in D$ and $v_4a\in D$, then $D[V_2]+\{v_4v_1,v_3v_4\}$ has a strong arc decomposition.
	
	If $av_3\in D$ and $v_1a\in D$, then $D[V_2]+\{v_1v_3,v_3v_1\}$ has a strong arc decomposition. If $av_3\in D$ and $v_4a\in D$, then $D[V_2]+\{v_4v_3\}$ has a strong arc decomposition.

    In conclusion, by Lemma~\ref{lem:parallel}, $D$ has a strong arc decomposition when $a^-a^+=v_3v_1$, a contradiction.
	
	\item We assume that $a^-a^+=v_1v_2$. It is a contradiction by a similar argument. 
\end{itemize}
So, we have that $a^-a^+\ne v_3v_1$ or $v_1v_2$, which is a contradiction. Thus, if $D[V_2]$ is isomorphic to $S_4$, $D$ has a strong arc decomposition if $|V_1|\geq 1$.

\begin{center}
	\rule{\linewidth}{2pt}
\end{center}

\subsection{$D[V_2]$ is $S_{4,-1}$.}

Since $S_4$ is a subdigraph of $\bar{G}_\text{new}$ (or $\bar{G}$), then there exists a vertex $a\in v_1$ such that $v_4a,av_2\in D$. If there is another vertex $b\in V_1$, then $D$ has a strong arc decomposition by splitting off $(v_4a,av_2)$ at $a$, and applying the proof for the case
where $D[V_2]$ is $S_4$. Therefore, we only need to consider the case where $V_1=\{a\}$.

\begin{center}
	\rule{\linewidth}{1.2pt}
\end{center}

When $a$ is only adjacent to $v_2$ and $v_4$:

\begin{figure}[H]
	\begin{minipage}[t]{0.23\linewidth}
		\vspace{0pt}
		\centering
		\begin{tikzpicture}[scale=0.8]
			\filldraw[black](0,0) circle (3pt)node[label=left:$v_1$](v1){};
			\filldraw[black](2,0) circle (3pt)node[label=right:$v_2$](v2){};
			\filldraw[black](0,-2) circle (3pt)node[label=left:$v_3$](v3){};
			\filldraw[black](2,-2) circle (3pt)node[label=right:$v_4$](v4){};
			
			\filldraw[red](3,-1) circle (3pt)node[label=above:$a$](a){};
			
			\foreach \i/\j/\t in {
				v1/v2/0,
				v2/v3/0,
				v3/v4/0,
				v4/v1/0,
				v1/v3/15,
				v3/v1/15,
				v2/v4/0
			}{\path[draw, line width=0.8] (\i) edge[bend left=\t] (\j);}
			
			\foreach \i/\j/\t in {
				a/v2/15,
				a/v4/15,
				v4/a/15,
				v2/a/15
			}{\path[draw, red, line width=0.8] (\i) edge[bend left=\t] (\j);}
		\end{tikzpicture}
		\caption*{$i$}
	\end{minipage}
	\hfill
	\begin{minipage}[t]{0.73\linewidth}
		\vspace{0pt}
		It has no strong arc decomposition as $(iv)^*\times (iv)$ has no strong arc decomposition.
	\end{minipage}
\end{figure}

\begin{center}
	\rule{\linewidth}{1.2pt}
\end{center}

When $a$ is adjacent to $v_2$,$v_4$ and $v_1$:

\noindent\rule{0.5\linewidth}{0.4pt}

If $av_1, v_1a\in D$, $D$ has a strong arc decomposition as $D[V_2]+\{v_4v_1,v_1v_2\}$ has a strong arc decomposition by Lemma~\ref{lem:parallel}.

\noindent\rule{0.5\linewidth}{0.4pt}

If $av_1\in D, v_1a\notin D$:

\begin{figure}[H]
	\begin{minipage}[t]{0.23\linewidth}
		\vspace{0pt}
		\centering
		\begin{tikzpicture}[scale=0.8]
			\filldraw[black](0,0) circle (3pt)node[label=left:$v_1$](v1){};
			\filldraw[black](2,0) circle (3pt)node[label=right:$v_2$](v2){};
			\filldraw[black](0,-2) circle (3pt)node[label=left:$v_3$](v3){};
			\filldraw[black](2,-2) circle (3pt)node[label=right:$v_4$](v4){};
			
			\filldraw[red](3,-1) circle (3pt)node[label=above:$a$](a){};
			
			\foreach \i/\j/\t in {
				v1/v2/0,
				v2/v3/0,
				v3/v4/0,
				v4/v1/0,
				v1/v3/15,
				v3/v1/15,
				v2/v4/0
			}{\path[draw, line width=0.8] (\i) edge[bend left=\t] (\j);}
			
			\foreach \i/\j/\t in {
				a/v2/15,
				v2/a/15,
				v4/a/0,
				a/v1/0
			}{\path[draw, red, line width=0.8] (\i) edge[bend left=\t] (\j);}
		\end{tikzpicture}
		\caption*{$ii$}
	\end{minipage}
	\hfill
	\begin{minipage}[t]{0.73\linewidth}
		\vspace{0pt}
		It has no strong arc decomposition as $(ii)^*\times (ii)$ has no strong arc decomposition.
	\end{minipage}
\end{figure}

\begin{figure}[H]
	\begin{minipage}{0.23\linewidth}
		\vspace{0pt}
		\centering
		\begin{tikzpicture}[scale=0.8]
			\filldraw[black](0,0) circle (3pt)node[label=left:$v_1$](v1){};
			\filldraw[black](2,0) circle (3pt)node[label=right:$v_2$](v2){};
			\filldraw[black](0,-2) circle (3pt)node[label=left:$v_3$](v3){};
			\filldraw[black](2,-2) circle (3pt)node[label=right:$v_4$](v4){};
			
			\filldraw[red](3,-1) circle (3pt)node[label=above:$a$](a){};
			
			\foreach \i/\j/\t in {
				v1/v2/0,
				v2/v3/0,
				v3/v4/0,
				v4/v1/0,
				v1/v3/15,
				v3/v1/15,
				v2/v4/0
			}{\path[draw, line width=0.8] (\i) edge[bend left=\t] (\j);}
			
			\foreach \i/\j/\t in {
				a/v2/15,
				v2/a/15,
				v4/a/15,
				a/v4/15,
				a/v1/0
			}{\path[draw, red, line width=0.8] (\i) edge[bend left=\t] (\j);}
		\end{tikzpicture}
	\end{minipage}
	\hfill
	$\longrightarrow$
	\hfill
	\begin{minipage}{0.23\linewidth}
		\vspace{0pt}
		\centering
		\begin{tikzpicture}[scale=0.8]
			\filldraw[black](0,0) circle (3pt)node[label=left:$v_1$](v1){};
			\filldraw[black](2,0) circle (3pt)node[label=right:$v_2$](v2){};
			\filldraw[black](0,-2) circle (3pt)node[label=left:$v_3$](v3){};
			\filldraw[black](2,-2) circle (3pt)node[label=right:$v_4$](v4){};
			
			\filldraw[red](3,-1) circle (3pt)node[label=above:$a$](a){};
			
			\foreach \i/\j/\t in {
				v3/v4/0,
				v1/v3/15,
				v2/v4/0
			}{\path[draw, line width=0.8] (\i) edge[bend left=\t] (\j);}
			
			\foreach \i/\j/\t in {
				a/v2/15,
				v4/a/15,
				a/v1/0
			}{\path[draw, red, line width=0.8] (\i) edge[bend left=\t] (\j);}
		\end{tikzpicture}
	\end{minipage}
	\begin{minipage}{0.23\linewidth}
		\vspace{0pt}
		\centering
		\begin{tikzpicture}[scale=0.8]
			\filldraw[black](0,0) circle (3pt)node[label=left:$v_1$](v1){};
			\filldraw[black](2,0) circle (3pt)node[label=right:$v_2$](v2){};
			\filldraw[black](0,-2) circle (3pt)node[label=left:$v_3$](v3){};
			\filldraw[black](2,-2) circle (3pt)node[label=right:$v_4$](v4){};
			
			\filldraw[red](3,-1) circle (3pt)node[label=above:$a$](a){};
			
			\foreach \i/\j/\t in {
				v1/v2/0,
				v2/v3/0,
				v4/v1/0,
				v3/v1/15
			}{\path[draw, line width=0.8] (\i) edge[bend left=\t] (\j);}
			
			\foreach \i/\j/\t in {
				v2/a/15,
				a/v4/15
			}{\path[draw, red, line width=0.8] (\i) edge[bend left=\t] (\j);}
		\end{tikzpicture}
	\end{minipage}
\end{figure}

\noindent\rule{0.5\linewidth}{0.4pt}

If $av_1\notin D, v_1a\in D$:

\begin{figure}[H]
	\begin{minipage}[t]{0.23\linewidth}
		\vspace{0pt}
		\centering
		\begin{tikzpicture}[scale=0.8]
			\filldraw[black](0,0) circle (3pt)node[label=left:$v_1$](v1){};
			\filldraw[black](2,0) circle (3pt)node[label=right:$v_2$](v2){};
			\filldraw[black](0,-2) circle (3pt)node[label=left:$v_3$](v3){};
			\filldraw[black](2,-2) circle (3pt)node[label=right:$v_4$](v4){};
			
			\filldraw[red](3,-1) circle (3pt)node[label=above:$a$](a){};
			
			\foreach \i/\j/\t in {
				v1/v2/0,
				v2/v3/0,
				v3/v4/0,
				v4/v1/0,
				v1/v3/15,
				v3/v1/15,
				v2/v4/0
			}{\path[draw, line width=0.8] (\i) edge[bend left=\t] (\j);}
			
			\foreach \i/\j/\t in {
				a/v2/0,
				a/v4/15,
				v4/a/15,
				v1/a/0
			}{\path[draw, red, line width=0.8] (\i) edge[bend left=\t] (\j);}
		\end{tikzpicture}
		\caption*{$iii$}
	\end{minipage}
	\hfill
	\begin{minipage}[t]{0.73\linewidth}
		\vspace{0pt}
		It has no strong arc decomposition as $(iii)^*\times (iii)$ has no strong arc decomposition. ($iii$ is isomorphic to $ii$)
	\end{minipage}
\end{figure}

\begin{figure}[H]
	\begin{minipage}{0.23\linewidth}
		\vspace{0pt}
		\centering
		\begin{tikzpicture}[scale=0.8]
			\filldraw[black](0,0) circle (3pt)node[label=left:$v_1$](v1){};
			\filldraw[black](2,0) circle (3pt)node[label=right:$v_2$](v2){};
			\filldraw[black](0,-2) circle (3pt)node[label=left:$v_3$](v3){};
			\filldraw[black](2,-2) circle (3pt)node[label=right:$v_4$](v4){};
			
			\filldraw[red](3,-1) circle (3pt)node[label=above:$a$](a){};
			
			\foreach \i/\j/\t in {
				v1/v2/0,
				v2/v3/0,
				v3/v4/0,
				v4/v1/0,
				v1/v3/15,
				v3/v1/15,
				v2/v4/0
			}{\path[draw, line width=0.8] (\i) edge[bend left=\t] (\j);}
			
			\foreach \i/\j/\t in {
				a/v2/15,
				v2/a/15,
				v4/a/15,
				a/v4/15,
				v1/a/0
			}{\path[draw, red, line width=0.8] (\i) edge[bend left=\t] (\j);}
		\end{tikzpicture}
	\end{minipage}
	\hfill
	$\longrightarrow$
	\hfill
	\begin{minipage}{0.23\linewidth}
		\vspace{0pt}
		\centering
		\begin{tikzpicture}[scale=0.8]
			\filldraw[black](0,0) circle (3pt)node[label=left:$v_1$](v1){};
			\filldraw[black](2,0) circle (3pt)node[label=right:$v_2$](v2){};
			\filldraw[black](0,-2) circle (3pt)node[label=left:$v_3$](v3){};
			\filldraw[black](2,-2) circle (3pt)node[label=right:$v_4$](v4){};
			
			\filldraw[red](3,-1) circle (3pt)node[label=above:$a$](a){};
			
			\foreach \i/\j/\t in {
				v3/v4/0,
				v1/v3/15,
				v4/v1/0,
				v1/v2/0
			}{\path[draw, line width=0.8] (\i) edge[bend left=\t] (\j);}
			
			\foreach \i/\j/\t in {
				v2/a/15,
				a/v4/15
			}{\path[draw, red, line width=0.8] (\i) edge[bend left=\t] (\j);}
		\end{tikzpicture}
	\end{minipage}
	\begin{minipage}{0.23\linewidth}
		\vspace{0pt}
		\centering
		\begin{tikzpicture}[scale=0.8]
			\filldraw[black](0,0) circle (3pt)node[label=left:$v_1$](v1){};
			\filldraw[black](2,0) circle (3pt)node[label=right:$v_2$](v2){};
			\filldraw[black](0,-2) circle (3pt)node[label=left:$v_3$](v3){};
			\filldraw[black](2,-2) circle (3pt)node[label=right:$v_4$](v4){};
			
			\filldraw[red](3,-1) circle (3pt)node[label=above:$a$](a){};
			
			\foreach \i/\j/\t in {
				v2/v4/0,
				v2/v3/0,
				v3/v1/15
			}{\path[draw, line width=0.8] (\i) edge[bend left=\t] (\j);}
			
			\foreach \i/\j/\t in {
				v4/a/15,
				a/v2/15,
				v1/a/0
			}{\path[draw, red, line width=0.8] (\i) edge[bend left=\t] (\j);}
		\end{tikzpicture}
	\end{minipage}
\end{figure}

\begin{center}
	\rule{\linewidth}{1.2pt}
\end{center}

When $a$ is adjacent to $v_2$,$v_4$ and $v_3$:

\noindent\rule{0.5\linewidth}{0.4pt}

If $v_3a,av_3\in D$:

\begin{figure}[H]
	\begin{minipage}{0.23\linewidth}
		\vspace{0pt}
		\centering
		\begin{tikzpicture}[scale=0.8]
			\filldraw[black](0,0) circle (3pt)node[label=left:$v_1$](v1){};
			\filldraw[black](2,0) circle (3pt)node[label=right:$v_2$](v2){};
			\filldraw[black](0,-2) circle (3pt)node[label=left:$v_3$](v3){};
			\filldraw[black](2,-2) circle (3pt)node[label=right:$v_4$](v4){};
			
			\filldraw[red](3,-1) circle (3pt)node[label=above:$a$](a){};
			
			\foreach \i/\j/\t in {
				v1/v2/0,
				v2/v3/0,
				v3/v4/0,
				v4/v1/0,
				v1/v3/15,
				v3/v1/15,
				v2/v4/0
			}{\path[draw, line width=0.8] (\i) edge[bend left=\t] (\j);}
			
			\foreach \i/\j/\t in {
				a/v2/15,
				v3/a/10,
				v4/a/15,
				a/v3/10
			}{\path[draw, red, line width=0.8] (\i) edge[bend left=\t] (\j);}
			
			\path[draw, dashed, red, line width=0.8] (a) edge[bend left=15] (v4);
			
			\path[draw, dashed, red, line width=0.8] (v2) edge[bend left=15] (a);
		\end{tikzpicture}
	\end{minipage}
	\hfill
	$\longrightarrow$
	\hfill
	\begin{minipage}{0.23\linewidth}
		\vspace{0pt}
		\centering
		\begin{tikzpicture}[scale=0.8]
			\filldraw[black](0,0) circle (3pt)node[label=left:$v_1$](v1){};
			\filldraw[black](2,0) circle (3pt)node[label=right:$v_2$](v2){};
			\filldraw[black](0,-2) circle (3pt)node[label=left:$v_3$](v3){};
			\filldraw[black](2,-2) circle (3pt)node[label=right:$v_4$](v4){};
			
			\filldraw[red](3,-1) circle (3pt)node[label=above:$a$](a){};
			
			\foreach \i/\j/\t in {
				v1/v2/0,
				v3/v1/15,
				v2/v4/0
			}{\path[draw, line width=0.8] (\i) edge[bend left=\t] (\j);}
			
			\foreach \i/\j/\t in {
				v4/a/15,
				a/v3/10
			}{\path[draw, red, line width=0.8] (\i) edge[bend left=\t] (\j);}
		\end{tikzpicture}
	\end{minipage}
	\begin{minipage}{0.23\linewidth}
		\vspace{0pt}
		\centering
		\begin{tikzpicture}[scale=0.8]
			\filldraw[black](0,0) circle (3pt)node[label=left:$v_1$](v1){};
			\filldraw[black](2,0) circle (3pt)node[label=right:$v_2$](v2){};
			\filldraw[black](0,-2) circle (3pt)node[label=left:$v_3$](v3){};
			\filldraw[black](2,-2) circle (3pt)node[label=right:$v_4$](v4){};
			
			\filldraw[red](3,-1) circle (3pt)node[label=above:$a$](a){};
			
			\foreach \i/\j/\t in {
				v2/v3/0,
				v3/v4/0,
				v4/v1/0,
				v1/v3/15
			}{\path[draw, line width=0.8] (\i) edge[bend left=\t] (\j);}
			
			\foreach \i/\j/\t in {
				a/v2/15,
				v3/a/10
			}{\path[draw, red, line width=0.8] (\i) edge[bend left=\t] (\j);}
		\end{tikzpicture}
	\end{minipage}
\end{figure}

\noindent\rule{0.5\linewidth}{0.4pt}

If $v_3a\notin D,av_3\in D$:

\begin{figure}[H]
	\begin{minipage}[t]{0.23\linewidth}
		\vspace{0pt}
		\centering
		\begin{tikzpicture}[scale=0.8]
			\filldraw[black](0,0) circle (3pt)node[label=left:$v_1$](v1){};
			\filldraw[black](2,0) circle (3pt)node[label=right:$v_2$](v2){};
			\filldraw[black](0,-2) circle (3pt)node[label=left:$v_3$](v3){};
			\filldraw[black](2,-2) circle (3pt)node[label=right:$v_4$](v4){};
			
			\filldraw[red](3,-1) circle (3pt)node[label=above:$a$](a){};
			
			\foreach \i/\j/\t in {
				v1/v2/0,
				v2/v3/0,
				v3/v4/0,
				v4/v1/0,
				v1/v3/15,
				v3/v1/15,
				v2/v4/0
			}{\path[draw, line width=0.8] (\i) edge[bend left=\t] (\j);}
			
			\foreach \i/\j/\t in {
				a/v2/15,
				v2/a/15,
				v4/a/15,
				a/v3/0
			}{\path[draw, red, line width=0.8] (\i) edge[bend left=\t] (\j);}
			
			\path[draw, dashed, red, line width=0.8] (a) edge[bend left=15] (v4);
		\end{tikzpicture}
		\caption*{$iv$}
	\end{minipage}
	\hfill
	\begin{minipage}[t]{0.73\linewidth}
		\vspace{0pt}
		It has no strong arc decomposition no matter the existence of dashed arcs as $(iv)^*\times (iv)$ has no strong arc decomposition.
	\end{minipage}
\end{figure}

\noindent\rule{0.5\linewidth}{0.4pt}

If $v_3a\in D,av_3\notin D$:

\begin{figure}[H]
	\begin{minipage}[t]{0.23\linewidth}
		\vspace{0pt}
		\centering
		\begin{tikzpicture}[scale=0.8]
			\filldraw[black](0,0) circle (3pt)node[label=left:$v_1$](v1){};
			\filldraw[black](2,0) circle (3pt)node[label=right:$v_2$](v2){};
			\filldraw[black](0,-2) circle (3pt)node[label=left:$v_3$](v3){};
			\filldraw[black](2,-2) circle (3pt)node[label=right:$v_4$](v4){};
			
			\filldraw[red](3,-1) circle (3pt)node[label=above:$a$](a){};
			
			\foreach \i/\j/\t in {
				v1/v2/0,
				v2/v3/0,
				v3/v4/0,
				v4/v1/0,
				v1/v3/15,
				v3/v1/15,
				v2/v4/0
			}{\path[draw, line width=0.8] (\i) edge[bend left=\t] (\j);}
			
			\foreach \i/\j/\t in {
				a/v2/15,
				a/v4/15,
				v4/a/15,
				v3/a/0
			}{\path[draw, red, line width=0.8] (\i) edge[bend left=\t] (\j);}
			
			\path[draw, dashed, red, line width=0.8] (v2) edge[bend left=15] (a);
		\end{tikzpicture}
		\caption*{$v$}
	\end{minipage}
	\hfill
	\begin{minipage}[t]{0.73\linewidth}
		\vspace{0pt}
		It has no strong arc decomposition no matter the existence of dashed arcs as $(iii)^*\times (v)$ has no strong arc decomposition.
	\end{minipage}
\end{figure}
We have now completed all cases where $D[V_2]$ is $S_{4,-1}$.

\begin{center}
	\rule{\linewidth}{2pt}
\end{center}

\subsection{$D[V_2]$ is $S_{4,-2}$.}

Similarly, as $S_4$ is a subdigraph of $\bar{G}_\text{new}$ (or $\bar{G}$), there are arcs $v_4a,av_2,v_3b,bv_1$ in $D$ where $a,b\in V_1$.

If $a=b$, then $a$ is adjacent to four vertices, which has been previously discussed. So we only consider the case $a\neq b$ in the following. Additionally, if there is another vertex $c\in V_1$ where $c\neq a,b$, then $D$ has a strong arc decomposition by splitting off $v_4a,av_2,v_3b,bv_1$ and applying the proof for the case where 
$D[V_2]$ is $S_4$. Therefore, we only need consider the case $V_1=\{a,b\}$.

Since we can split off either $v_4a,av_2$ or $v_3b,bv_1$ to obtain a graph similar to $S_{4,-1}$, if $D$ has no strong arc decomposition, then each of $a$ and $b$ must fall into one of the cases $(i), (ii), (iii), (iv)$ and $(v)$.

By reversing all arcs in cases $(i), (ii), (iii), (iv)$ and $(v)$, rotate 180 degrees clockwise, and relabeling, we obtain the corresponding reversed and rotated cases:  $(i)^*, (ii)^*, (iii)^*, (iv)^*$, and $(v)^*$ as described below.

\begin{figure}[H]
	\begin{minipage}[t]{0.19\linewidth}
		\vspace{0pt}
		\centering
		\begin{tikzpicture}[scale=0.7]
			\filldraw[black](0,0) circle (3pt)node[label=left:$v_1$](v1){};
			\filldraw[black](2,0) circle (3pt)node[label=right:$v_2$](v2){};
			\filldraw[black](0,-2) circle (3pt)node[label=left:$v_3$](v3){};
			\filldraw[black](2,-2) circle (3pt)node[label=right:$v_4$](v4){};
			
			\filldraw[red](3,-1) circle (3pt)node[label=above:$a$](a){};
			
			\foreach \i/\j/\t in {
				v1/v2/0,
				v2/v3/0,
				v3/v4/0,
				v4/v1/0,
				v1/v3/0,
				v2/v4/0
			}{\path[draw, line width=0.8] (\i) edge[bend left=\t] (\j);}
			
			\foreach \i/\j/\t in {
				a/v2/15,
				a/v4/15,
				v4/a/15,
				v2/a/15
			}{\path[draw, red, line width=0.8] (\i) edge[bend left=\t] (\j);}
		\end{tikzpicture}
		\caption*{$(i)$}
	\end{minipage}
	\hfill
	\begin{minipage}[t]{0.19\linewidth}
		\vspace{0pt}
		\centering
		\begin{tikzpicture}[scale=0.7]
			\filldraw[black](0,0) circle (3pt)node[label=left:$v_1$](v1){};
			\filldraw[black](2,0) circle (3pt)node[label=right:$v_2$](v2){};
			\filldraw[black](0,-2) circle (3pt)node[label=left:$v_3$](v3){};
			\filldraw[black](2,-2) circle (3pt)node[label=right:$v_4$](v4){};
			
			\filldraw[red](3,-1) circle (3pt)node[label=above:$a$](a){};
			
			\foreach \i/\j/\t in {
				v1/v2/0,
				v2/v3/0,
				v3/v4/0,
				v4/v1/0,
				v1/v3/0,
				v2/v4/0
			}{\path[draw, line width=0.8] (\i) edge[bend left=\t] (\j);}
			
			\foreach \i/\j/\t in {
				a/v2/15,
				v2/a/15,
				v4/a/0,
				a/v1/0
			}{\path[draw, red, line width=0.8] (\i) edge[bend left=\t] (\j);}
		\end{tikzpicture}
		\caption*{$(ii)$}
	\end{minipage}
	\hfill
	\begin{minipage}[t]{0.19\linewidth}
		\vspace{0pt}
		\centering
		\begin{tikzpicture}[scale=0.7]
			\filldraw[black](0,0) circle (3pt)node[label=left:$v_1$](v1){};
			\filldraw[black](2,0) circle (3pt)node[label=right:$v_2$](v2){};
			\filldraw[black](0,-2) circle (3pt)node[label=left:$v_3$](v3){};
			\filldraw[black](2,-2) circle (3pt)node[label=right:$v_4$](v4){};
			
			\filldraw[red](3,-1) circle (3pt)node[label=above:$a$](a){};
			
			\foreach \i/\j/\t in {
				v1/v2/0,
				v2/v3/0,
				v3/v4/0,
				v4/v1/0,
				v1/v3/0,
				v2/v4/0
			}{\path[draw, line width=0.8] (\i) edge[bend left=\t] (\j);}
			
			\foreach \i/\j/\t in {
				a/v2/0,
				a/v4/15,
				v4/a/15,
				v1/a/0
			}{\path[draw, red, line width=0.8] (\i) edge[bend left=\t] (\j);}
		\end{tikzpicture}
		\caption*{$(iii)$}
	\end{minipage}
	\hfill
	\begin{minipage}[t]{0.19\linewidth}
		\vspace{0pt}
		\centering
		\begin{tikzpicture}[scale=0.7]
			\filldraw[black](0,0) circle (3pt)node[label=left:$v_1$](v1){};
			\filldraw[black](2,0) circle (3pt)node[label=right:$v_2$](v2){};
			\filldraw[black](0,-2) circle (3pt)node[label=left:$v_3$](v3){};
			\filldraw[black](2,-2) circle (3pt)node[label=right:$v_4$](v4){};
			
			\filldraw[red](3,-1) circle (3pt)node[label=above:$a$](a){};
			
			\foreach \i/\j/\t in {
				v1/v2/0,
				v2/v3/0,
				v3/v4/0,
				v4/v1/0,
				v1/v3/0,
				v2/v4/0
			}{\path[draw, line width=0.8] (\i) edge[bend left=\t] (\j);}
			
			\foreach \i/\j/\t in {
				a/v2/15,
				v2/a/15,
				v4/a/15,
				a/v3/0
			}{\path[draw, red, line width=0.8] (\i) edge[bend left=\t] (\j);}
			
			\path[draw, dashed, red, line width=0.8] (a) edge[bend left=15] (v4);
		\end{tikzpicture}
		\caption*{$(iv)$}
	\end{minipage}
	\hfill
	\begin{minipage}[t]{0.19\linewidth}
		\vspace{0pt}
		\centering
		\begin{tikzpicture}[scale=0.7]
			\filldraw[black](0,0) circle (3pt)node[label=left:$v_1$](v1){};
			\filldraw[black](2,0) circle (3pt)node[label=right:$v_2$](v2){};
			\filldraw[black](0,-2) circle (3pt)node[label=left:$v_3$](v3){};
			\filldraw[black](2,-2) circle (3pt)node[label=right:$v_4$](v4){};
			
			\filldraw[red](3,-1) circle (3pt)node[label=above:$a$](a){};
			
			\foreach \i/\j/\t in {
				v1/v2/0,
				v2/v3/0,
				v3/v4/0,
				v4/v1/0,
				v1/v3/0,
				v2/v4/0
			}{\path[draw, line width=0.8] (\i) edge[bend left=\t] (\j);}
			
			\foreach \i/\j/\t in {
				a/v2/15,
				a/v4/15,
				v4/a/15,
				v3/a/0
			}{\path[draw, red, line width=0.8] (\i) edge[bend left=\t] (\j);}
			
			\path[draw, dashed, red, line width=0.8] (v2) edge[bend left=15] (a);
		\end{tikzpicture}
		\caption*{$(v)$}
	\end{minipage}
\end{figure}

\begin{figure}[H]
	\begin{minipage}[t]{0.19\linewidth}
		\vspace{0pt}
		\centering
		\begin{tikzpicture}[scale=0.7]
			\filldraw[black](0,0) circle (3pt)node[label=left:$v_1$](v1){};
			\filldraw[black](2,0) circle (3pt)node[label=right:$v_2$](v2){};
			\filldraw[black](0,-2) circle (3pt)node[label=left:$v_3$](v3){};
			\filldraw[black](2,-2) circle (3pt)node[label=right:$v_4$](v4){};
			\filldraw[green](-1,-1) circle (3pt)node[label=above:$b$](b){};
			
			\foreach \i/\j/\t in {
				v1/v2/0,
				v2/v3/0,
				v3/v4/0,
				v4/v1/0,
				v1/v3/0,
				v2/v4/0
			}{\path[draw, line width=0.8] (\i) edge[bend left=\t] (\j);}
			
			\foreach \i/\j/\t in {
				b/v1/15,
				b/v3/15,
				v3/b/15,
				v1/b/15
			}{\path[draw, green, line width=0.8] (\i) edge[bend left=\t] (\j);}
		\end{tikzpicture}
		\caption*{$(i)^*$}
	\end{minipage}
	\hfill
	\begin{minipage}[t]{0.19\linewidth}
		\vspace{0pt}
		\centering
		\begin{tikzpicture}[scale=0.7]
			\filldraw[black](0,0) circle (3pt)node[label=left:$v_1$](v1){};
			\filldraw[black](2,0) circle (3pt)node[label=right:$v_2$](v2){};
			\filldraw[black](0,-2) circle (3pt)node[label=left:$v_3$](v3){};
			\filldraw[black](2,-2) circle (3pt)node[label=right:$v_4$](v4){};
			\filldraw[green](-1,-1) circle (3pt)node[label=above:$b$](b){};
			
			\foreach \i/\j/\t in {
				v1/v2/0,
				v2/v3/0,
				v3/v4/0,
				v4/v1/0,
				v1/v3/0,
				v2/v4/0
			}{\path[draw, line width=0.8] (\i) edge[bend left=\t] (\j);}
			
			\foreach \i/\j/\t in {
				b/v1/0,
				b/v3/15,
				v3/b/15,
				v4/b/0
			}{\path[draw, green, line width=0.8] (\i) edge[bend left=\t] (\j);}
		\end{tikzpicture}
		\caption*{$(ii)^*$}
	\end{minipage}
	\hfill
	\begin{minipage}[t]{0.19\linewidth}
		\vspace{0pt}
		\centering
		\begin{tikzpicture}[scale=0.7]
			\filldraw[black](0,0) circle (3pt)node[label=left:$v_1$](v1){};
			\filldraw[black](2,0) circle (3pt)node[label=right:$v_2$](v2){};
			\filldraw[black](0,-2) circle (3pt)node[label=left:$v_3$](v3){};
			\filldraw[black](2,-2) circle (3pt)node[label=right:$v_4$](v4){};
			\filldraw[green](-1,-1) circle (3pt)node[label=above:$b$](b){};
			
			\foreach \i/\j/\t in {
				v1/v2/0,
				v2/v3/0,
				v3/v4/0,
				v4/v1/0,
				v1/v3/0,
				v2/v4/0
			}{\path[draw, line width=0.8] (\i) edge[bend left=\t] (\j);}
			
			\foreach \i/\j/\t in {
				b/v1/15,
				b/v4/0,
				v1/b/15,
				v3/b/0
			}{\path[draw, green, line width=0.8] (\i) edge[bend left=\t] (\j);}
		\end{tikzpicture}
		\caption*{$(iii)^*$}
	\end{minipage}
	\hfill
	\begin{minipage}[t]{0.19\linewidth}
		\vspace{0pt}
		\centering
		\begin{tikzpicture}[scale=0.7]
			\filldraw[black](0,0) circle (3pt)node[label=left:$v_1$](v1){};
			\filldraw[black](2,0) circle (3pt)node[label=right:$v_2$](v2){};
			\filldraw[black](0,-2) circle (3pt)node[label=left:$v_3$](v3){};
			\filldraw[black](2,-2) circle (3pt)node[label=right:$v_4$](v4){};
			\filldraw[green](-1,-1) circle (3pt)node[label=above:$b$](b){};
			
			\foreach \i/\j/\t in {
				v1/v2/0,
				v2/v3/0,
				v3/v4/0,
				v4/v1/0,
				v1/v3/0,
				v2/v4/0
			}{\path[draw, line width=0.8] (\i) edge[bend left=\t] (\j);}
			
			\foreach \i/\j/\t in {
				b/v1/15,
				b/v3/15,
				v3/b/15,
				v2/b/0
			}{\path[draw, green, line width=0.8] (\i) edge[bend left=\t] (\j);}
			
			\path[draw, dashed, green, line width=0.8] (v1) edge[bend left=15] (b);
		\end{tikzpicture}
		\caption*{$(iv)^*$}
	\end{minipage}
	\hfill
	\begin{minipage}[t]{0.19\linewidth}
		\vspace{0pt}
		\centering
		\begin{tikzpicture}[scale=0.7]
			\filldraw[black](0,0) circle (3pt)node[label=left:$v_1$](v1){};
			\filldraw[black](2,0) circle (3pt)node[label=right:$v_2$](v2){};
			\filldraw[black](0,-2) circle (3pt)node[label=left:$v_3$](v3){};
			\filldraw[black](2,-2) circle (3pt)node[label=right:$v_4$](v4){};
			\filldraw[green](-1,-1) circle (3pt)node[label=above:$b$](b){};
			
			\foreach \i/\j/\t in {
				v1/v2/0,
				v2/v3/0,
				v3/v4/0,
				v4/v1/0,
				v1/v3/0,
				v2/v4/0
			}{\path[draw, line width=0.8] (\i) edge[bend left=\t] (\j);}
			
			\foreach \i/\j/\t in {
				b/v1/15,
				v1/b/15,
				v3/b/15,
				b/v2/0
			}{\path[draw, green, line width=0.8] (\i) edge[bend left=\t] (\j);}
			
			\path[draw, dashed, green, line width=0.8] (b) edge[bend left=15] (v3);
		\end{tikzpicture}
		\caption*{$(v)^*$}
	\end{minipage}
\end{figure}

In this way, we only need to discuss the different combinations of cases $(i), (ii), (iii), (iv)$ and $(v)$. Additionally, since $(e)^*\times (f)$ can be transformed into $(f)^*\times (e)$ by reversing arcs and relabeling, where $e$ and $f$ are elements of $\{i, ii, iii, iv, v\}$, we only need to examine 15 distinct graphs.

\begin{center}
	\rule{\linewidth}{2pt}
\end{center}

$(i)^*\times (i)$ has no strong arc decomposition as $(iv)^*\times (iv)$ has no strong arc decomposition.

\begin{center}
	\rule{\linewidth}{0.4pt}
\end{center}

$(i)^*\times (ii)$ has no strong arc decomposition as $(ii)^*\times (iv)$ has no strong arc decomposition.

\begin{center}
	\rule{\linewidth}{0.4pt}
\end{center}

$(i)^*\times (iii)$ has no strong arc decomposition as $(iii)^*\times (v)$ has no strong arc decomposition.

\begin{center}
	\rule{\linewidth}{0.4pt}
\end{center}

$(i)^*\times (iv)$ has no strong arc decomposition as $(iv)^*\times (iv)$ has no strong arc decomposition.

\begin{center}
	\rule{\linewidth}{0.4pt}
\end{center}

$(i)^*\times (v)$ has a strong arc decomposition regardless of the existence of the dashed arc. This is because the subdigraph $D[V_2]+\{v_3v_1,v_1v_3,v_3v_4,v_4v_2\}$ has a strong arc decomposition by Lemma~\ref{lem:parallel}.

\begin{center}
	\rule{\linewidth}{0.4pt}
\end{center}

\begin{figure}[H]
	\begin{minipage}[t]{0.23\linewidth}
		\vspace{0pt}
		\centering
		\begin{tikzpicture}[scale=0.8]
			\filldraw[black](0,0) circle (3pt)node[label=left:$v_1$](v1){};
			\filldraw[black](2,0) circle (3pt)node[label=right:$v_2$](v2){};
			\filldraw[black](0,-2) circle (3pt)node[label=left:$v_3$](v3){};
			\filldraw[black](2,-2) circle (3pt)node[label=right:$v_4$](v4){};
			\filldraw[green](-1,-1) circle (3pt)node[label=above:$b$](b){};
			\filldraw[red](3,-1) circle (3pt)node[label=above:$a$](a){};
			
			\foreach \i/\j/\t in {
				v1/v2/0,
				v2/v3/0,
				v3/v4/0,
				v4/v1/0,
				v1/v3/0,
				v2/v4/0
			}{\path[draw, line width=0.8] (\i) edge[bend left=\t] (\j);}
			
			\foreach \i/\j/\t in {
				b/v1/0,
				b/v3/15,
				v3/b/15,
				v4/b/0
			}{\path[draw, green, line width=0.8] (\i) edge[bend left=\t] (\j);}
			
			\foreach \i/\j/\t in {
				a/v2/15,
				v2/a/15,
				v4/a/0,
				a/v1/0
			}{\path[draw, red, line width=0.8] (\i) edge[bend left=\t] (\j);}
		\end{tikzpicture}
		\caption*{$(ii)^*\times (ii)$}
	\end{minipage}
	\hfill
	\begin{minipage}[t]{0.73\linewidth}
		\vspace{0pt}
		It has no strong arc decomposition.
		
		Assume, for the sake of contradiction, that $D$ has a strong arc decomposition into $D_1$ and $D_2$. Given that $N^+(v_3)=2$, we may assume $v_3b\in D_1$ and $v_3v_4\in D_2$. Since $N^-(b)=2$, we have $v_4b\in D_2$. Since $N^-(v_4)=2$, we have $v_2v_4\in D_1$. Since $v_3v_4\in D_2$ and there are only two arcs from $\{v_1,v_3,b\}$ to $\{v_2,v_4,a\}$, we have $v_1v_2\in D_1$. Since $N^+(v_1)=2$ and $v_1v_2\in D_1$, we have $v_1v_3\in D_2$. Since $v_1v_2\in D_1$ and $N^-(v_2)=2$, we have $av_2\in D_2$. We have $v_4a\in D_2$ as there are only two arcs from $\{v_1,v_3,v_4,b\}$ to $\{v_2,a\}$, and so $v_2a,av_1\in D_1$. We can also conclude that $bv_3\in D_2, bv_1\in D_1$ as there are only two arcs from $\{v_3,b\}$ to $\{v_1,v_2,v_4,a\}$. $v_1v_1\in D_1$ as $v_4$ need out-arc. Now there is no in-arc of $v_1$ in $D_2$, a contradiction. So $D$ has no strong arc decomposition.
		
	\end{minipage}
\end{figure}

\begin{center}
	\rule{\linewidth}{0.4pt}
\end{center}

\begin{figure}[H]
	\begin{minipage}[t]{0.23\linewidth}
		\vspace{0pt}
		\centering
		\begin{tikzpicture}[scale=0.8]
			\filldraw[black](0,0) circle (3pt)node[label=left:$v_1$](v1){};
			\filldraw[black](2,0) circle (3pt)node[label=right:$v_2$](v2){};
			\filldraw[black](0,-2) circle (3pt)node[label=left:$v_3$](v3){};
			\filldraw[black](2,-2) circle (3pt)node[label=right:$v_4$](v4){};
			\filldraw[green](-1,-1) circle (3pt)node[label=above:$b$](b){};
			\filldraw[red](3,-1) circle (3pt)node[label=above:$a$](a){};
			
			\foreach \i/\j/\t in {
				v1/v2/0,
				v2/v3/0,
				v3/v4/0,
				v4/v1/0,
				v1/v3/0,
				v2/v4/0
			}{\path[draw, line width=0.8] (\i) edge[bend left=\t] (\j);}
			
			\foreach \i/\j/\t in {
				b/v1/0,
				b/v3/15,
				v3/b/15,
				v4/b/0
			}{\path[draw, green, line width=0.8] (\i) edge[bend left=\t] (\j);}
			
			\foreach \i/\j/\t in {
				a/v2/0,
				a/v4/15,
				v4/a/15,
				v1/a/0
			}{\path[draw, red, line width=0.8] (\i) edge[bend left=\t] (\j);}
		\end{tikzpicture}
		\caption*{$(ii)^*\times (iii)$}
	\end{minipage}
	\hfill
	\begin{minipage}[t]{0.73\linewidth}
		\vspace{0pt}
		It has no strong arc decomposition.
		
		Assume, for the sake of contradiction, that $D$ has a strong arc decomposition into $D_1$ and $D_2$. Given that  $N^+(v_3)=2$, we may assume $v_3b\in D_1, v_3v_4\in D_2$. we also have $bv_1\in D_1,v_4b,bv_3\in D_2$ as there are only two arcs from $\{v_3,b\}$ to $\{v_1,v_2,v_4,a\}$, and we have that $v_4v_1\in D_2$ since $N^-(v_1)=2$. Since $N^+(v_4)=3$, we have $v_4a\in D_1$, then $av_2\in D_1, v_1a,av_4\in D_2$, and $v_1v_2\in D_2$ as $N^-(v_2)=2$.
		
		Now, there is no arc from $\{v_1,v_3,b\}$ to $\{v_2,v_4,a\}$ in $D_1$, which leads to a contradiction.  So, $D$ has no strong arc decomposition.
	\end{minipage}
\end{figure}

\begin{center}
	\rule{\linewidth}{0.4pt}
\end{center}

\begin{figure}[H]
	\begin{minipage}[t]{0.23\linewidth}
		\vspace{0pt}
		\centering
		\begin{tikzpicture}[scale=0.8]
			\filldraw[black](0,0) circle (3pt)node[label=left:$v_1$](v1){};
			\filldraw[black](2,0) circle (3pt)node[label=right:$v_2$](v2){};
			\filldraw[black](0,-2) circle (3pt)node[label=left:$v_3$](v3){};
			\filldraw[black](2,-2) circle (3pt)node[label=right:$v_4$](v4){};
			\filldraw[green](-1,-1) circle (3pt)node[label=above:$b$](b){};
			\filldraw[red](3,-1) circle (3pt)node[label=above:$a$](a){};
			
			\foreach \i/\j/\t in {
				v1/v2/0,
				v2/v3/0,
				v3/v4/0,
				v4/v1/0,
				v1/v3/0,
				v2/v4/0
			}{\path[draw, line width=0.8] (\i) edge[bend left=\t] (\j);}
			
			\foreach \i/\j/\t in {
				b/v1/0,
				b/v3/15,
				v3/b/15,
				v4/b/0
			}{\path[draw, green, line width=0.8] (\i) edge[bend left=\t] (\j);}
			
			\foreach \i/\j/\t in {
				a/v2/15,
				v2/a/15,
				v4/a/15,
				a/v3/0
			}{\path[draw, red, line width=0.8] (\i) edge[bend left=\t] (\j);}
			
			\path[draw, dashed, red, line width=0.8] (a) edge[bend left=15] (v4);
		\end{tikzpicture}
		\caption*{$(ii)^*\times (iv)$}
	\end{minipage}
	\hfill
	\begin{minipage}[t]{0.73\linewidth}
		\vspace{0pt}
		It has no strong arc decomposition regardless of the existence of dashed arcs.
		
	Assume, for the sake of contradiction, that $D$ has a strong arc decomposition into $D_1$ and $D_2$. Given that $N_D^+(v_3)=2$, we may assume $v_3b\in D_1, v_3v_4\in D_2$. Besides, $bv_1\in D_1,v_4b,bv_3\in D_2$, $v_4v_1\in D_2$ as $N^-(v_1)=2$, $v_1v_2\in D_1$ as $v_3v_4\in D_2$ and they are from $\{v_1,v_3,b\}$ to $\{v_2,v_4,a\}$, $v_4a\in D_1$ as $N^+(v_4)=3$. 
		
		Now there is no in-arc of $\{v_2,a\}$ in $D_2$, a contradiction. So, $D$ has no strong arc decomposition. 	
	\end{minipage}
\end{figure}

\begin{center}
	\rule{\linewidth}{0.4pt}
\end{center}

$(ii)^*\times (v)$ has a strong arc decomposition regardless of the existence of the dashed arc as $D[V_2]+\{v_3v_2,v_4v_1\}$ has a strong arc decomposition by Lemma~\ref{lem:parallel}.

\begin{center}
	\rule{\linewidth}{0.4pt}
\end{center}

\begin{figure}[H]
	\begin{minipage}[t]{0.23\linewidth}
		\vspace{0pt}
		\centering
		\begin{tikzpicture}[scale=0.8]
			\filldraw[black](0,0) circle (3pt)node[label=left:$v_1$](v1){};
			\filldraw[black](2,0) circle (3pt)node[label=right:$v_2$](v2){};
			\filldraw[black](0,-2) circle (3pt)node[label=left:$v_3$](v3){};
			\filldraw[black](2,-2) circle (3pt)node[label=right:$v_4$](v4){};
			\filldraw[green](-1,-1) circle (3pt)node[label=above:$b$](b){};
			\filldraw[red](3,-1) circle (3pt)node[label=above:$a$](a){};
			
			\foreach \i/\j/\t in {
				v1/v2/0,
				v2/v3/0,
				v3/v4/0,
				v4/v1/0,
				v1/v3/0,
				v2/v4/0
			}{\path[draw, line width=0.8] (\i) edge[bend left=\t] (\j);}
			
			\foreach \i/\j/\t in {
				b/v1/15,
				v1/b/15,
				v3/b/0,
				b/v4/0
			}{\path[draw, green, line width=0.8] (\i) edge[bend left=\t] (\j);}
			
			\foreach \i/\j/\t in {
				a/v2/0,
				a/v4/15,
				v4/a/15,
				v1/a/0
			}{\path[draw, red, line width=0.8] (\i) edge[bend left=\t] (\j);}
		\end{tikzpicture}
		\caption*{$(iii)^*\times (iii)$}
	\end{minipage}
	\hfill
	\begin{minipage}[t]{0.73\linewidth}
		\vspace{0pt}
		It has no strong arc decomposition.
		
		Assume, for the sake of contradiction, that $D$ has a strong arc decomposition into $D_1$ and $D_2$. Given that $N_D^+(v_3)=2$, we may assume $v_3b\in D_1, v_3v_4\in D_2$.
	By the in-degree of $v_1$, out-degree of $v_4$, we have to divide $b$ into $v_3bv_1,v_1bv_4$, $a$ into $v_4av_2,v_1av_4$.
		
		Besides, $bv_1\in D_1,v_1b,bv_4\in D_2$, $v_4v_1\in D_2$ as $N^-(v_1)=2$, $v_4a,av_2\in D_1, v_1a,av_4\in D_2$ as $N^+(v_4)=2$, $v_1v_2 \in D_2$ as $N^-(v_2)=2$. Then there is no arc from $\{v_1,v_3,b\}$ to $\{v_2,v_4,a\}$ in $D_1$, a contradiction. So, $D$ has no strong arc decomposition.
	\end{minipage}
\end{figure}

\begin{center}
	\rule{\linewidth}{0.4pt}
\end{center}

\begin{figure}[H]
	\begin{minipage}[t]{0.23\linewidth}
		\vspace{0pt}
		\centering
		\begin{tikzpicture}[scale=0.8]
			\filldraw[black](0,0) circle (3pt)node[label=left:$v_1$](v1){};
			\filldraw[black](2,0) circle (3pt)node[label=right:$v_2$](v2){};
			\filldraw[black](0,-2) circle (3pt)node[label=left:$v_3$](v3){};
			\filldraw[black](2,-2) circle (3pt)node[label=right:$v_4$](v4){};
			\filldraw[green](-1,-1) circle (3pt)node[label=above:$b$](b){};
			\filldraw[red](3,-1) circle (3pt)node[label=above:$a$](a){};
			
			\foreach \i/\j/\t in {
				v1/v2/0,
				v2/v3/0,
				v3/v4/0,
				v4/v1/0,
				v1/v3/0,
				v2/v4/0
			}{\path[draw, line width=0.8] (\i) edge[bend left=\t] (\j);}
			
			\foreach \i/\j/\t in {
				b/v1/15,
				v1/b/15,
				v3/b/0,
				b/v4/0
			}{\path[draw, green, line width=0.8] (\i) edge[bend left=\t] (\j);}
			
			\foreach \i/\j/\t in {
				a/v2/15,
				v2/a/15,
				v4/a/15,
				a/v3/0
			}{\path[draw, red, line width=0.8] (\i) edge[bend left=\t] (\j);}
			
			\path[draw, dashed, red, line width=0.8] (a) edge[bend left=15] (v4);
		\end{tikzpicture}
		\caption*{$(iii)^*\times (iv)$}
	\end{minipage}
	\hfill
	\begin{minipage}[t]{0.73\linewidth}
		\vspace{0pt}
		It has no strong arc decomposition regardless of whether it has dashed arcs.
		
		Assume, for the sake of contradiction, that $D$ has a strong arc decomposition into $D_1$ and $D_2$. Given that $N^-(v_1)=2$, we may assume $v_3b,bv_1\in D_1,v_1b,bv_4,v_4v_1\in D_2$. Besides, $v_3v_4\in D_2$ as $N^+(v_3)=2$, $v_1v_2\in D_1$ as there are only two arcs from $\{v_1,v_3,b\}$ to $\{v_2,v_4,a\}$, $v_4a\in D_1$ as $N^+(v_4)=2$.
		
		Now, there is no in-arc of $\{a,v_2\}$ in $D_2$, which means $D$ has no strong arc decomposition.
	\end{minipage}
\end{figure}

\begin{center}
	\rule{\linewidth}{0.4pt}
\end{center}

\begin{figure}[H]
	\begin{minipage}[t]{0.23\linewidth}
		\vspace{0pt}
		\centering
		\begin{tikzpicture}[scale=0.8]
			\filldraw[black](0,0) circle (3pt)node[label=left:$v_1$](v1){};
			\filldraw[black](2,0) circle (3pt)node[label=right:$v_2$](v2){};
			\filldraw[black](0,-2) circle (3pt)node[label=left:$v_3$](v3){};
			\filldraw[black](2,-2) circle (3pt)node[label=right:$v_4$](v4){};
			\filldraw[green](-1,-1) circle (3pt)node[label=above:$b$](b){};
			\filldraw[red](3,-1) circle (3pt)node[label=above:$a$](a){};
			
			\foreach \i/\j/\t in {
				v1/v2/0,
				v2/v3/0,
				v3/v4/0,
				v4/v1/0,
				v1/v3/0,
				v2/v4/0
			}{\path[draw, line width=0.8] (\i) edge[bend left=\t] (\j);}
			
			\foreach \i/\j/\t in {
				b/v1/15,
				v1/b/15,
				v3/b/0,
				b/v4/0
			}{\path[draw, green, line width=0.8] (\i) edge[bend left=\t] (\j);}
			
			\foreach \i/\j/\t in {
				a/v2/15,
				a/v4/15,
				v4/a/15,
				v3/a/0
			}{\path[draw, red, line width=0.8] (\i) edge[bend left=\t] (\j);}
			
			\path[draw, dashed, red, line width=0.8] (v2) edge[bend left=15] (a);
		\end{tikzpicture}
		\caption*{$(iii)^*\times (v)$}
	\end{minipage}
	\hfill
	\begin{minipage}[t]{0.73\linewidth}
		\vspace{0pt}
		It has no strong arc decomposition no matter if it has dashed arcs.
		
		Assume, for the sake of contradiction, that $D$ has a strong arc decomposition into $D_1$ and $D_2$. Given that $N^-(v_1)=2$, we may assume $v_3b,bv_1\in D_1,v_1b,bv_4,v_4v_1\in D_2$. Besides, $v_2v_3\in D_1$ as there are only two arcs from $\{v_2,v_4,a\}$ to $\{v_1,v_3,b\}$, $v_1v_3\in D_2$ as $N^-(v_3)=2$, $v_1v_2\in D_1$ as $N^+(v_1)=3$, $av_2\in D_2$ as $N^-(v_2)=2$, $av_4\in D_1$ as $N^+(a)=2$.
		
		Now, there is no out-arc of $\{a,v_4\}$ in $D_1$, which means $D$ has no strong arc decomposition.
	\end{minipage}
\end{figure}

\begin{center}
	\rule{\linewidth}{0.4pt}
\end{center}

\begin{figure}[H]
	\begin{minipage}[t]{0.23\linewidth}
		\vspace{0pt}
		\centering
		\begin{tikzpicture}[scale=0.8]
			\filldraw[black](0,0) circle (3pt)node[label=left:$v_1$](v1){};
			\filldraw[black](2,0) circle (3pt)node[label=right:$v_2$](v2){};
			\filldraw[black](0,-2) circle (3pt)node[label=left:$v_3$](v3){};
			\filldraw[black](2,-2) circle (3pt)node[label=right:$v_4$](v4){};
			\filldraw[green](-1,-1) circle (3pt)node[label=above:$b$](b){};
			\filldraw[red](3,-1) circle (3pt)node[label=above:$a$](a){};
			
			\foreach \i/\j/\t in {
				v1/v2/0,
				v2/v3/0,
				v3/v4/0,
				v4/v1/0,
				v1/v3/0,
				v2/v4/0
			}{\path[draw, line width=0.8] (\i) edge[bend left=\t] (\j);}
			
			\foreach \i/\j/\t in {
				b/v1/15,
				b/v3/15,
				v3/b/15,
				v2/b/0
			}{\path[draw, green, line width=0.8] (\i) edge[bend left=\t] (\j);}
			
			\path[draw, dashed, green, line width=0.8] (v1) edge[bend left=15] (b);
			
			\foreach \i/\j/\t in {
				a/v2/15,
				v2/a/15,
				v4/a/15,
				a/v3/0
			}{\path[draw, red, line width=0.8] (\i) edge[bend left=\t] (\j);}
			
			\path[draw, dashed, red, line width=0.8] (a) edge[bend left=15] (v4);
		\end{tikzpicture}
		\caption*{$(iv)^*\times (iv)$}
	\end{minipage}
	\hfill
	\begin{minipage}[t]{0.73\linewidth}
		\vspace{0pt}
		It has no strong arc decomposition no matter the existence of dashed arcs.
		
		Assume, for the sake of contradiction, that $D$ has a strong arc decomposition into $D_1$ and $D_2$. Given that $N_D^+(v_3)=2$, we may assume $v_3b\in D_1, v_3v_4\in D_2$. Besides, $bv_1\in D_1, bv_3\in D_2$ as $N_D^+(b)=2$ and $v_3$ has no out-neighbor besides $b$ in $D_1$, $v_1v_2\in D_1$ as there are only two arcs from $\{v_1,v_3,b\}$ to $\{v_2,v_4,a\}$, $v_4v_1\in D_2$ as $N_D^-(v_1)=2$, $v_4a\in D_1$ as $N_D^+(v_4)=2$.

        Now, there is no in-arc of $\{a,v_2\}$ in $D_2$, which means $D$ has no strong arc decomposition.
	\end{minipage}
\end{figure}

\begin{center}
	\rule{\linewidth}{0.4pt}
\end{center}

$(iv)^*\times (v)$ has a strong arc decomposition regardless of the existence of the dashed arc as $D[V_2]+\{v_3v_1,v_2v_3,v_4v_2,v_3v_4\}$ has a strong arc decomposition by Lemma~\ref{lem:parallel}.

\begin{center}
	\rule{\linewidth}{0.4pt}
\end{center}

$(v)^*\times (v)$ has a strong arc decomposition regardless of the existence of the dashed arc as $D[V_2]+\{v_3v_1,v_1v_2,v_4v_2,v_3v_4\}$ has a strong arc decomposition by Lemma~\ref{lem:parallel}.

\end{document}